\newtheorem{theorem}{Theorem}[section]
\newtheorem{corollary}[theorem]{Corollary}
\newtheorem{proposition}[theorem]{Proposition}
\renewcommand{\leq}{\leqslant}
\renewcommand{\geq}{\geqslant}
\theoremstyle{definition}
\theoremstyle{definition}
\numberwithin{equation}{section}
\newcommand{\ov}[1]{\overline{#1}}
\newcommand{\de}{\partial}
\newcommand{\db}{\overline{\partial}}
\renewcommand{\leq}{\leqslant}
\renewcommand{\geq}{\geqslant}
\numberwithin{equation}{section} \numberwithin{figure}{section}
\title[Degeneration  of Ricci-flat Calabi-Yau manifolds]{Degeneration  of Ricci-flat Calabi-Yau manifolds and its applications}
 \author[Y. Zhang]{Yuguang Zhang}
 \thanks{The  author is supported in part by  grant NSFC-11271015.}
\address{Yau Mathematical Sciences Center,  Tsinghua University,  Beijing 100084, P.R.China.}
\email{yuguangzhang76@yahoo.com}
\begin{document}
\begin{abstract}
This is a survey article of the recent progresses on the metric behaviour of Ricci-flat K\"{a}hler-Einstein  metrics along    degenerations of Calabi-Yau manifolds.
\end{abstract}
\maketitle
\section{Introduction}
A Calabi-Yau manifold $X$   is a   simply connected complex  projective manifold with trivial canonical bundle $\varpi_{X}\cong \mathcal{O}_{X}$, and  a polarized Calabi-Yau manifold $(X,L)$ is a Calabi-Yau manifold $X$ with an ample line bundle $L$. Note that the definition of Calabi-Yau manifold varies according to the documents, and we use this one for our convenience.
 For example, if $\mathfrak{f}$ is a generic homogenous polynomial of degree ${\rm deg} \mathfrak{f}= n+1$, $n\geq 3$,  such that the hypersurface  $$X=\{[z_{0},\cdots,z_{n}]\in \mathbb{CP}^{n}| \mathfrak{f}(z_{0},\cdots,z_{n})=0\}$$ is smooth, then $X$ is a Calabi-Yau manifold, and the restriction  $\mathcal{O}_{ \mathbb{CP}^{n}}(1)|_{X}$ of the  hyperplane   bundle  is an ample line bundle. The study of Calabi-Yau manifolds became
     very important  in the last four  decades, and   readers are referred to  the
survey articles   and   books \cite{Yau-survey,Yau2, GHJ,Hubsch}  for the  complete aspects   on  Calabi-Yau manifolds.  The present paper surveys the recent progresses on  metric  properties of Calabi-Yau manifolds.

In the 1970's, S.-T. Yau  proved the  Calabi's conjecture
in  \cite{Ya1}, which asserts the existence of Ricci-flat K\"{a}hler-Einstein  metrics on Calabi-Yau manifolds.

  \begin{theorem}[Calabi-Yau Theorem]\label{calabi-yau}
 If $X$ is a Calabi-Yau manifold, then
 for  any K\"{a}hler class $\alpha\in H^{1,1}(X, \mathbb{R})$,
there exists a
     unique Ricci-flat K\"{a}hler-Einstein  metric
     $\omega\in\alpha$, i.e. $${\rm Ric}(\omega)\equiv 0.$$
     \end{theorem}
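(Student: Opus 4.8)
The plan is to recast $\Ric(\omega)=0$ as a complex Monge-Amp\`ere equation and solve it by the continuity method, following Yau. First I would fix a reference K\"ahler metric $\omega_0\in\alpha$. Since $\varpi_X\cong\O_X$ carries a nowhere-vanishing holomorphic $n$-form, we have $c_1(X)=0$, so the Ricci form $\Ric(\omega_0)$ is exact and the $\de\db$-lemma yields a smooth real function $f$ with $\Ric(\omega_0)=\ddbar f$. Seeking the unknown metric in the same class as $\omega=\omega_0+\ddbar\vp>0$, and using $\Ric(\omega)=\Ric(\omega_0)-\ddbar\log\frac{\omega^n}{\omega_0^n}$, the condition $\Ric(\omega)\equiv0$ is equivalent, after absorbing a constant fixed by the cohomological identity $\int_X\omega^n=\int_X\alpha^n=\int_X\omega_0^n$, to
\begin{equation*}
(\omega_0+\ddbar\vp)^n=e^{F}\omega_0^n,\qquad \omega_0+\ddbar\vp>0,\qquad \int_X e^{F}\omega_0^n=\int_X\omega_0^n .
\end{equation*}

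Next I would apply the continuity method to the family $(\omega_0+\ddbar\vp_t)^n=e^{tF+c_t}\omega_0^n$, $t\in[0,1]$, with $c_t$ determined by the volume normalization, and study $S=\{t:\exists\text{ a smooth }\vp_t\}$. One has $0\in S$ via $\vp_0\equiv0$. Openness follows from the implicit function theorem in H\"older spaces: the linearization of $\vp\mapsto\log\frac{(\omega_0+\ddbar\vp)^n}{\omega_0^n}$ at a solution is the Laplacian $\Delta_{\omega_t}$, an isomorphism from $C^{2,\beta}$ to $C^{0,\beta}$ once restricted to functions of zero average (kernel and cokernel are the constants, by Hodge theory and the maximum principle). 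The essential issue is closedness, which demands a priori estimates on $\vp_t$ uniform in $t$.

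These estimates are the heart of the matter and the main obstacle. Normalizing $\int_X\vp_t\,\omega_0^n=0$, I would first prove a uniform bound $\|\vp_t\|_{C^0}\le C$; Yau obtains this by Moser iteration, testing the equation against powers of $\vp_t$ and invoking the Sobolev inequality of the fixed background $(X,\omega_0)$. The delicate second-order estimate comes next: applying the maximum principle to $e^{-A\vp_t}\,\tr{\omega_0}{\omega_t}$, where $\tr{\omega_0}{\omega_t}=n+\Delta_{\omega_0}\vp_t$ and $A$ dominates a lower bound for the bisectional curvature of the fixed metric $\omega_0$, yields $\tr{\omega_0}{\omega_t}\le C$; together with the equation (which pins down $\omega_t^n/\omega_0^n$) this gives uniform equivalence $C^{-1}\omega_0\le\omega_t\le C\omega_0$, so that the equation becomes uniformly elliptic. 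A Calabi-type third-order estimate, or the complex Evans--Krylov theorem, then upgrades this to a uniform $C^{2,\beta}$ bound, and Schauder bootstrapping produces uniform $C^{k,\beta}$ bounds for every $k$. With these, $S$ is closed, hence $S=[0,1]$ and a smooth solution exists at $t=1$.

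Finally, for uniqueness, if $\omega$ and $\omega'=\omega+\ddbar\psi$ are both Ricci-flat in $\alpha$, then $(\omega+\ddbar\psi)^n=\omega^n$, so $\ddbar\psi\wedge T=0$ with the closed positive $(n-1,n-1)$-form $T=\sum_{j=0}^{n-1}(\omega')^{j}\wedge\omega^{n-1-j}>0$. Multiplying by $\psi$, integrating over $X$, and integrating by parts gives $\int_X\mn\,\de\psi\wedge\db\psi\wedge T=0$ with a nonnegative integrand, forcing $\de\psi=0$; hence $\psi$ is constant and $\omega=\omega'$. I expect the second-order estimate, with its dependence on the background curvature and on the precise algebraic structure of the Monge-Amp\`ere operator, to be the principal technical hurdle.
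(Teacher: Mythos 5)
Your proposal is correct and follows exactly the route the paper indicates: the paper reduces Theorem \ref{calabi-yau} to the existence and uniqueness of a solution of the complex Monge--Amp\`ere equation $(\omega_{0}+\sqrt{-1}\partial\overline{\partial}\varphi)^{n}=(-1)^{\frac{n^{2}}{2}}\Omega\wedge\overline{\Omega}$ and cites Yau's continuity-method proof, which is precisely the argument (openness via the linearized Laplacian, closedness via the $C^{0}$, second-order, and Evans--Krylov/Calabi estimates, and uniqueness via the standard integration by parts against $T=\sum_{j}(\omega')^{j}\wedge\omega^{n-1-j}$) that you outline. No discrepancy to report.
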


      This  theorem is obtained by showing the existence and the  uniqueness of  solution of   Monge-Amp\`ere equation $$ (\omega_{0}+\sqrt{-1}\partial\overline{\partial }\varphi)^{n}=(-1)^{\frac{n^{2}}{2}}\Omega\wedge\overline{\Omega}, \  \  \ \sup_{X} \varphi=0,  $$  for a given   background K\"{a}hler metric $\omega_{0}\in \alpha$, where   $\Omega$ is a  holomorphic volume form, i.e. a nowhere vanishing section of    $\varpi_{X}$.
      The K\"{a}hler metric   $\omega=\omega_{0}+\sqrt{-1}\partial\overline{\partial }\varphi$ has vanishing Ricci curvature,  and
   the  Riemannian holonomy group of $\omega$ is a subgroup of  $SU(n)$.  Conversely,  a simply connected  Riemannian manifold with holonomy group a subgroup of  $SU(n)$ is a K\"{a}hler manifold with trivial canonical bundle, and   if  it is projective,   it is a Calabi-Yau manifold by our definition.   In fact, Calabi-Yau manifolds play some roles in  physics, precisely the string theory,  because the holonomy group of  Ricci-flat Calabi-Yau threefolds  is $SU(3)$ (cf. \cite{Hubsch}). It makes perfect sense to   understand the interaction between the  metric properties of Ricci-flat K\"{a}hler-Einstein  metrics and the  algebro-geometric properties of  underlying Calabi-Yau manifolds.

  Denote $\mathfrak{M}(\tilde{X})$  the set of all Ricci-flat K\"{a}hler-Einstein  metrics on Calabi-Yau $n$-manifolds with the same underlying  differential manifold $\tilde{X}$.   There are two parameters of $\mathfrak{M}(\tilde{X})$, one is the complex structure and the other is the K\"{a}hler class.
On a Calabi-Yau manifold $X$,  the Calabi-Yau theorem shows that  Ricci-flat K\"{a}hler-Einstein  metrics  are parameterized by the K\"{a}hler cone $\mathbb{K}_{X}\subset H^{1,1}(X, \mathbb{R})$, which is an open cone in $H^{1,1}(X, \mathbb{R})$ such that $\alpha \in \mathbb{K}_{X} $ if and only if
   there  is  a  K\"{a}hler  metric  representing $ \alpha $. A   deformation of $X$ means  a smooth proper   morphism $\pi: \mathcal{X}\rightarrow S$ for two  complex analytic varieties   $\mathcal{X}$ and $S$ such that there is a distinguished point $0\in S $  satisfying $X=\pi^{-1}(0)$. A universal deformation $\pi: \mathcal{X}\rightarrow S$ is a deformation of $X$ such that  any deformation $\pi': \mathcal{X}'\rightarrow S'$ of $X$ is isomorphic to  the pull-back under a unique morphism $\phi: S'\rightarrow S$.
     The Bogomolov-Tian-Todorov unobtructedness theorem (cf. \cite{Ti,Tod})  asserts that the local universal  deformation space of $X$ exists, and is smooth with tangent space $H^{1}(X, \mathcal{T}_{X})\cong H^{n-1,1}(X)$.
      Thus loosely speaking, $\mathfrak{M}(\tilde{X})$ is a space of real  dimension $h^{1,1}+2h^{n-1,1}$, and the tangent space at $X$ is $H^{1,1}(X, \mathbb{R})\oplus H^{n-1,1}(X) $.

    If  $\pi: \mathcal{X}\rightarrow S$ is   a deformation of a Calabi-Yau manifold $X$, and $\gamma: [0,1] \rightarrow \bigcup\limits_{t\in S} \mathbb{K}_{X_{t}}$ is a curve  where $X_{t}=\pi^{-1}(t)$, then we have a family of    Ricci-flat K\"{a}hler-Einstein Calabi-Yau manifolds $(X_{\gamma(s)}, \omega_{\gamma(s)})$, $s\in [0,1]$,
     with the same underlying differential manifold $\tilde{X}$ by the Calabi-Yau theroem.    Clearly, $\omega_{\gamma(s)}$ is a family of Ricci-flat K\"{a}hler-Einstein metrics on $\tilde{X}$ depending on the parameter $s\in [0,1]$ continuously.   However, if we degenerate $X$ to some singular variety by deforming the complex structure  on $X$, or degenerate the   K\"{a}hler classes to a non-K\"{a}hler class, the question arisen is how   those   Ricci-flat K\"{a}hler-Einstein metrics behave  along such degenerations.
     We certainly do not expect that those metrics converge  to another Ricci-flat K\"{a}hler-Einstein metric on $\tilde{X}$,  since the degeneration limit would not support any K\"{a}hler metric in the usual sense anymore, and
     some dramatic phenomena  must occur.

     There are several  motivations to  study this question.  Firstly, the Calabi-Yau theorem asserts the existence of  Ricci-flat K\"{a}hler-Einstein metrics by solving the  Monge-Amp\`ere equation,  a fully nonlinear partial differential equation, and
     however, very few of them can be written down explicitly.  It is desirable to improve our knowledge of Ricci-flat K\"{a}hler-Einstein  metrics, for example what the manifold with these metrics looks
   like. One method to achieve this understanding is by studying the limiting  behaviour of metrics along degenerations (cf. Section 2.1).
     Secondly, the  conifold transition (or more general extremal  transition) provides  a way to connect Calabi-Yau
     threefolds with different topology in algebraic geometry  (cf. \cite{Ro}).    It  was conjectured by physicists   Candelas and  de la Ossa  that this process is continuous in the space
    of all  Ricci-flat K\"{a}hler-Einstein threefolds  in  \cite{Ca}.   Therefore it is important to study how Ricci-flat K\"{a}hler-Einstein  metrics change in this process.
     Thirdly, there is a nice corresponding between the  compactifications of the moduli spaces of curves in the algebro-geometric sense and the differential geometric counterpart. It is  interesting to generalize such compactification to the case of
          Calabi-Yau manifolds.  Finally,  Strominger, Yau and Zaslow proposed  a  conjecture  in  \cite{SYZ}, so called SYZ conjecture,  for constructing mirror Calabi-Yau manifolds via dual special lagrangian fibration.  Later, a new version of   SYZ conjecture was proposed by   Kontsevich,  Soibelman, Gross and Wilson
    (cf. \cite{GW,KS,KS2}) by using   the collapsing of Ricci-flat K\"{a}hler-Einstein metrics. In both cases, it is needed to understand the metric behaviour of Ricci-flat K\"{a}hler-Einstein metrics when  complex structures degenerate to  the   large complex limit.

     Now we have a relatively clear picture  of the so called non-collapsing limit behaviour of Ricci-flat K\"{a}hler-Einstein metrics due to many recent works by various authors (cf. \cite{RuZ,RoZ1,RoZ2,To1,Song1,Song2,DS} etc.).  We say    a sequence $(X_{k},g_{k})$ of Riemannian manifolds non-collapsed, if there is a constant $\kappa>0$ such that
        $${\rm Vol}_{g_{k}}(B_{g_{k}}(p,1))\geq \kappa,$$ for any   metric 1-ball $B_{g_{k}}(p,1)\subset X_{k}$, and  otherwise,  we say $(X_{k},g_{k})$  collapsed.
        Roughly speaking, our current understanding can be summed up as the following:   the non-collapsing of Ricci-flat K\"{a}hler-Einstein metrics in the differential geometric sense is equivalent to the degeneration of Calabi-Yau manifolds to certain irreducible normal varieties  in some  algebro-geometric sense.
         The goal of this  paper is to  survey these   progresses.

   This paper is organized  as the following.  Section 2 is the preparation for  the  results reviewed in this paper. In Subsection 2.1, we recall an  earlier theorem due to R. Kobayashi on K3 surfaces.  Many later works  in this paper, more or less,  can be regarded as generalizations of this  result. In Subsection 2.2, we recall the generalized Calabi-Yau theorem for normal varieties due to Eyssidieux,   Guedj, and  Zeriahi, and in Subsection 2.3, we review Cheeger-Colding's theory on the Gromov-Hausdorff convergence of Riemannian manifolds with bounded Ricci curvature.  Section 3 studies  the convergence of Ricci-flat K\"{a}hler-Einstein metrics along degenerations of Calabi-Yau manifolds.  In Subsection 3.1, we survey the papers \cite{RoZ1,RoZ2}, which study the   convergence of  metrics under the assumption of degenerations, and in Subsection 3.2, we review a theorem due to Donaldson and Sun \cite{DS}, which study the algebro-geometric property of  limits  in the Gromov-Hausdorff convergence.  Subsection 3.3 establishes the equivalence  between the Gromov-Hausdorff convergence and the
 algebro-geometric degeneration in the non-collapsing case.  Those results are from \cite{To,Ta,Wang1}.  In Section 4, we apply the works reviewed in Section 3 to obtain a completion for the moduli space of polarized Calabi-Yau manifolds, which is  in \cite{zha1}.  Section 5 studies  the effect of  surgeries in algebraic geometry on Ricci-flat K\"{a}hler-Einstein metrics.  In Subsection 5.1, we recall the theorem in \cite{To1} first, which shows the convergence of metrics along degenerations of K\"{a}hler classes  while keeping the complex structure fixed.  Then we recall the result of  \cite{RoZ1}  for  the metric behaviour under flops, and apply it to the connectedness property of birational equivalent  Calabi-Yau threefolds.  Subsection 5.2  surveys   the  metric behaviour of Ricci-flat K\"{a}hler-Einstein metrics under extremal transitions in \cite{RoZ1}, which proves the conjecture due to Candelas and  de la Ossa.  In Section 6, we briefly  review the collapsing of Calabi-Yau manifolds, where our knowledge is much less complete  than the non-collapsing case. Finally,  we give an analytic proof of  a   finiteness theorem  for  polarized  complex manifolds in Appendix A, which is a joint work with Valentino Tosatti.

\noindent {\bf Acknowledgements:} The author would  like to thank Xiaochun Rong, Wei-Dong Ruan, Mark Gross and Valentino Tosatti for collaborations,  which lead to
many  of works surveyed in this paper.    This article is an extension of the  invited talk that the author gave at the conference 'Uniformization, Riemann-Hilbert Correspondence, Calabi-Yau Manifolds, and Picard-Fuchs Equations' at  Institut Mittag-Leffler.  The author  thanks  the organizers for the invitation, and  Institut Mittag-Leffler for the hospitality. Finally, the author  thanks Gross, Tosatti,  Takayama and Zhenlei Zhang  for some comments.

 \section{Preliminaries }
  \subsection{Ricci-flat K\"{a}hler-Einstein metrics on  K3 surfaces}
  In this subsection, we recall some earlier works  on Ricci-flat K\"{a}hler-Einstein metrics on  K3 surfaces,  which stimulate  the recent works surveyed in this paper (at least those works of the author).

 Let $T$ be a complex two torus, i.e. $T=\mathbb{C}^{2}/\Lambda$ where $\Lambda\cong \mathbb{Z}^{4}$ is a lattice.  The Kummer K3 surface $X$ is constructed as the following.   There is  a $\mathbb{Z}_{2}$-action on $T$ by $-1: (z_{1},z_{2})\mapsto (-z_{1},-z_{2})$, and the quotient $X=T/\mathbb{Z}_{2}$ is a complex orbifold with 16 ordinary double points as singularities.    The holomorphic $2$-form $\Omega=dz_{1}\wedge dz_{2}$ is invariant under this  $\mathbb{Z}_{2}$-action, and hence descends a holomorphic volume form on $X$ in the orbifold sense.
  We say $X$ a Calabi-Yau orbifold.
 The crepant resolution $\bar{\pi}:\bar{X} \rightarrow X$ is obtained by replacing the singular points by  ($-2$)-curves $E_{1}, \cdots, E_{16}$, which is a K3 surface called the Kummer K3 surface.  Note that there is flat orbifold K\"{a}hler metric $\omega$ on $X$ induced by the standard flat metric on the torus  $T$, and $\bar{\pi}^{*}\omega$ represents a non-K\"{a}hler  semi-ample class.  And  $$\alpha_{s}=[\bar{\pi}^{*}\omega]-s\sum_{i=1}^{16}a_{i} E_{i}, \  \  \  s\in (0, 1], $$ is a K\"{a}hler   class for $0<a_{i} \ll 1$, $i=1, \cdots, 16$.

 The Calabi-Yau theorem shows  the existence of K\"{a}hler-Einstein metrics for K3 surfaces.   In the case of Kummer K3 surface, there is an alternative  proof without using partial differential equations  due to Topiwala in \cite{Top1} (See also \cite{LubS} for  the related work), where the twistor theory and the deformation theory of complex structure are used.    It was expected that the curvature of the Ricci-flat K\"{a}hler-Einstein metric representing $\alpha_{s}$ on a Kummer K3 surface would concentrate around the 16 ($-2$)-curves (cf. \cite{GHawking,Pag}) when $s\ll 1$, and it was verified by  R.  Kobayashi in
 \cite{Kob1}, in which
  the limit behaviour of Ricci-flat K\"{a}hler-Einstein metrics  $\bar{\omega}_{s}\in \alpha_{s}$ is studied  when $s \rightarrow 0$ by a    gluing argument.

  Let's recall this construction.
    Locally each singular point of $X$ is of the form $ \mathbb{C}^{2}/ \mathbb{Z}_{2}$, the quadric cone in $\mathbb{C}^{3}$, and the crepant resolution is the total space of $\mathcal{O}_{\mathbb{CP}^{1}}(-2)$, which is diffeomorphic to the total space of $T^{*}S^{2}$.  We denote $E$ the $(-2)$-curve in $\mathcal{O}_{\mathbb{CP}^{1}}(-2)$, and we identify $\mathcal{O}_{\mathbb{CP}^{1}}(-2)\backslash E$ with $ (\mathbb{C}^{2}/ \mathbb{Z}_{2})\backslash \{0\}$ by the resolution map. Let $\rho=|z_{1}|^{2}+|z_{2}|^{2}$ on $\mathbb{C}^{2}$, which descents to a function on  $ \mathbb{C}^{2}/ \mathbb{Z}_{2}$ denoted still by $\rho$.  For any $0<a \leq 1$, if we let  $$f_{a}(\rho)=\rho \sqrt{1+\frac{a^{2}}{\rho^{2}}}+a \log (\frac{\rho}{a+\sqrt{a^{2}+\rho^{2}}}), $$ then $\omega_{a}=\sqrt{-1}\partial\overline{\partial}f_{a}$ defines a complete asymptotically locally Euclidean (ALE)  Ricci-flat K\"{a}hler-Einstein metric on $\mathcal{O}_{\mathbb{CP}^{1}}(-2)$,  called the Eguchi-Hanson metric.   Note that  $[\omega_{a}]=-2a E$, and $\omega_{a}$ converges smoothly to the flat metric $\omega_{flat}=\sqrt{-1}\partial\overline{\partial} \rho$ of $ \mathbb{C}^{2}/ \mathbb{Z}_{2}$ on any compact subset of  $\mathcal{O}_{\mathbb{CP}^{1}}(-2)\backslash E$ when $a \rightarrow 0$, and $\omega_{a} \sim \omega_{flat}$ for $\rho\gg 1$.  The curvature of the restriction of $\omega_{a}$ on $E$ is $a^{-1}$, and the volume is $4\pi a$.

 In \cite{Kob1}, a background K\"{a}hler metric $\omega_{s}$ is constructed such that $\omega_{s}=\omega_{\frac{sa_{i}}{2}}$  on a small neighborhood of $E_{i}$, $i=1, \cdots, 16$, and $\omega_{s}=\omega$   on a big compact subset of $X_{reg}=\bar{X}\backslash \bigcup\limits_{i=1}^{16}E_{i}$. The metric $\omega_{s}$ is  an approximate Ricci-flat K\"{a}hler metric.
  Let $\varphi_{s}$ be the unique potential function such that $\sup\limits_{\bar{X}} \varphi_{s}=0$, and $\bar{\omega}_{s}=\omega_{s}+\sqrt{-1}\partial\overline{\partial}\varphi_{s}$ be  the Ricci-flat  K\"{a}hler-Einstein metric.  By using Yau's estimate of  the  Monge-Amp\`ere equation \cite{Y3}, the  explicit bounds are  obtained in  \cite{Kob1}, i.e.  \begin{itemize}
  \item[i)] $C^{0}$-bound:   $\|\varphi_{s}\|_{C^{0}}\leq C |s|^{2}$,  \item[ii)] $C^{2}$-bound:  $$ (1-C|s|^{\frac{1}{2}})\omega_{s} \leq \bar{\omega}_{s} \leq (1+C|s|^{\frac{1}{2}})\omega_{s},$$ for
 a constant $C>0$ independent of $s$, \item[iii)] higher order bounds:
  $\|\varphi_{s}\|_{C^{k}(K)}\leq C_{k,K} |s|^{2}$, for  constants $C_{k,K}>0$ on any compact subset $K\subset X_{reg}$. \end{itemize}
      As a consequence, $\bar{\omega}_{s}$ converges smoothly  to the standard flat metric $\omega$ on any  compact subset $K\subset X_{reg}$ when $s\rightarrow 0$, and the exceptional divisors $E_{i}$ shrink to points.  It is clear that $(\bar{X}, \bar{\omega}_{s})$ converges to $(X, \omega)$ in the Gromov-Hausdorff sense (cf. Subsection 2.3).  Furthermore, the bubbling limits are also studied in  \cite{Kob1}, and it is proved  that near any $E_{i}$,  $s^{-1}\bar{\omega}_{s}$ converges the the Eguchi-Hanson metric $\omega_{\frac{a_{i}}{2}}$ on $\mathcal{O}_{\mathbb{CP}^{1}}(-2)$ in a certain sense when $s\rightarrow 0$.

  The above convergence result is also proved for more general K3 orbifold $X$
  in \cite{Kob1} with a little weaker bounds for the metric by gluing ALE gravitational instantons obtained by Kronheimer (cf. \cite{Kron}).  Moreover, since any Ricci-flat K\"{a}hler-Einstein metric on a K3 surface is HyperK\"{a}hler,  this result can also be explained as
   the  convergence of metrics  along a degeneration of complex structures      by performing   the HyperK\"{a}hler rotation.     Denote $I$ the complex structure of $\bar{X}$, and let $\Omega_{s}$ be the holomorphic volume form on $\bar{X}$ with  $\bar{\omega}_{s}^{2}=\Omega_{s}\wedge\overline{\Omega}_{s}$.  For any $s\in (0,1]$, we have a  new complex structure $J_{s}$  with corresponding  K\"{a}hler form and holomorphic volume form $$ \bar{\omega}_{J_{s}}= {\rm Re}\Omega_{s},  \  \  \ \Omega_{J_{s}}= {\rm Im}\Omega_{s}+\sqrt{-1} \bar{\omega}_{s},$$  and however  the respective Riemannian metric does not change. Thus we obtain the convergence of metrics along the degeneration of complex structures  $J_{s}$.  As an application, it is shown in \cite{Kob1} and \cite{KT} that  one can fill the holes in the moduli space of K\"{a}hler K3 surfaces by some K\"{a}hler K3 orbifolds.

  The analogue of the above construction was also carried out for a rigid Calabi-Yau threefold in \cite{Lu1}.   \cite{Do} gives a different proof of the result for the Kummer K3 surface with less PDE involved.
    More importantly, Gross and Wilson used the gluing argument to prove a theorem about the  collapsing of Ricci-flat K\"{a}hler-Einstein metrics on  K3 surfaces in \cite{GW}. Here the K3 surface $X$ is assumed to admit  an elliptic fibration $f:X\rightarrow B$ with 24  singular fibers of  type $I_{1}$. The background K\"{a}hler  metric is constructed by gluing the semi-flat K\"{a}hler-Einstein metric on the part with smooth fibers, and the Ooguri-Vafa metric near the singular fibers. Then Yau's estimate of  the  Monge-Amp\`ere equation gives the proof  in \cite{GW}. See Section 6 for more discussions.  Furthermore, the similar gluing arguments were used by D. Joyce to construct compact $G_{2}$ and $Spin(7)$ manifolds (cf. \cite{Joy}).

  If we want to generalize  the above convergence  theorem to   higher dimensional Calabi-Yau manifolds,  which have  more complicated singularities, the gluing arguments fail due to many difficulties. One of them is the lack of local model like the  Eguchi-Hanson metric in the above case.  However, recent progresses on the Monge-Amp\`ere equation and the  Gromov-Hausdorff theory make it possible, if we are willing to sacrifice some explicit estimates.

 \subsection{Calabi-Yau variety}
 In this subsection, we recall the generalized Calabi-Yau theorem   due to  Eyssidieux,   Guedj, and Zeriahi, which asserts  the existence of   singular K\"{a}hler-Einstein metrics for certain  normal varieties.

Let's recall some notions  first.
  A normal projective variety $X$ is called  $1$-Gorenstein if  the dualizing   sheaf $\varpi_{X}$ is an invertible  sheaf, i.e. a line bundle,  and is called Gorenstein if furthermore $X$ is Cohen-Macaulay. A variety $X$ has only canonical singularities if $X$ is $1$-Gorenstein, and for any resolution $\bar{\pi}: \bar{X}\rightarrow X$, $\bar{\pi}_{*}\varpi_{\bar{X}}=\varpi_{X}$, which is equivalent to that the canonical divisor $\mathcal{K}_{X}$ is Cartier, and $$\mathcal{K}_{\bar{X}} = \bar{\pi}^{*}\mathcal{K}_{X}+\sum\limits_{E} a_{E}E, \ \ {\rm and} \  \  a_{E} \geq 0, $$ where $E$ are effective  exceptional prime divisors.
    The ordinary double point, i.e.  the singularity locally given by $$z_{0}^{2}+\cdots +z_{n}^{2}=0,  \  \   \ {\rm  in }  \  \  \mathbb{C}^{n+1} $$ for $n\geq 2$, is a simple example of canonical singularity.
   If $X$ has only canonical singularities, then the singularities are rational, and $X$ is Cohen-Macaulay (cf. (C)   of \cite[Section 3]{Re}), which implies that $X$ is Gorenstein.

    A Calabi-Yau variety $X$ is a normal projective Gorenstein variety with trivial dualizing   sheaf $\varpi_{X}\cong \mathcal{O}_{X}$, and having at most  canonical singularities. A  polarized Calabi-Yau variety $(X,L)$ is a Calabi-Yau variety $X$  with an ample line bundle $L$.  If $X$ has only finite ordinary double points as singularities, we call $X$ a conifold.

  There is a notion of K\"{a}hler metric for normal varieties (cf.  \cite[Section 5.2]{EGZ}).  Let  $X$ be a normal $n$-dimensional projective variety.
  For any singular point  $p\in  X_{sing}$ and a  small neighborhood $U_{p}\subset X$ of $p$,
a pluri-subharmonic function $v$ (resp. strictly pluri-subharmonic,
and pluri-harmonic) on $U_{p}$ is an upper semi-continuous function
with value in $\mathbb{R}\cup \{-\infty\}$ ($v$  is not locally
$-\infty$)  such that  $v$ extends to a pluri-subharmonic function $
\tilde{v}$ (resp. strictly pluri-subharmonic, and pluri-harmonic) on
a neighborhood of the image of  some local embedding $U_{p}
\hookrightarrow \mathbb{C}^{m}$.  We call $v$ smooth if  $
 \tilde{v}$ is smooth.
     A   form $\omega$  on $X$
       is called a K\"{a}hler metric, if $\omega$  is a smooth K\"{a}hler metric   in the usual
sense on $X_{reg}$ and, for any  $p\in X_{sing}$, there is a
neighborhood $U_{p}$  and a continuous strictly pluri-subharmonic
function $v$ on
 $U_{p}$ such that
 $\omega=\sqrt{-1}\partial\overline{\partial}v$ on $U_{p}\cap X_{reg}
 $. We call $\omega$  smooth if $ v$ is
 smooth in the  above sense. Otherwise, we call $\omega$ a singular K\"{a}hler metric.
 If $\mathcal{PH}_{X}$ denotes the sheaf of pluri-harmonic
functions on $X$, then  any K\"{a}hler metric  $\omega$ represents
a class $[\omega]$ in $H^{1}(X, \mathcal{PH}_{X})$ (cf.
Section 5.2 in \cite{EGZ}).  Note that $H^{1}(X, \mathcal{PH}_{X})\cong H^{1,1}(X,\mathbb{R})$ if $X$ is  smooth.    We also have an  analogue of Chern-Weil
theory for line bundles on $X$ (see \cite{EGZ} for details).

 In  \cite{EGZ}, a   generalized Calabi-Yau theorem is obtained for   Calabi-Yau varieties, i.e. the existence and the uniqueness of  singular Ricci-flat K\"{a}hler-Einstein metrics  with bounded potentials.  The   potential function   in the theorem  is proved to be  continuous  in \cite{EGZ2}.

    \begin{theorem}[\cite{EGZ}]
  \label{EGZ}
  Let $X$ be a  Calabi-Yau
  $n$-variety,
   and  $\omega_{0}$ be a smooth  K\"{a}hler form on $X$.  Then  there is a unique  continuous    function
 $\varphi$ on $X$   satisfying the following Monge-Amp\`ere equation  $$(\omega_{0}+ \sqrt{-1}\partial
 \overline{\partial} \varphi)^{n}=(-1)^{\frac{n^2}{2}}\Omega \wedge
 \overline{\Omega}, \ \   \sup_{X} \varphi =0 , \  \  {\rm and} \  \varphi\geq - C,
 $$  where $\Omega$ is a  holomorphic volume form, i.e. a nowhere vanishing section of the dualizing sheaf   $\varpi_{X}$, such that $\int_{X}\omega_{0}^{n}=\int_{X}(-1)^{\frac{n^2}{2}}\Omega \wedge \overline{\Omega} $.
  The restriction of the singular K\"{a}hler metric  $\omega=\omega_{0}+\sqrt{-1}\partial\overline{\partial }\varphi$ on the regular locus $X_{reg}$  is a smooth  Ricci-flat K\"{a}hler-Einstein metric, and  $\omega\in [\omega_{0}]\in H^{1}(X,
 \mathcal{PH}_{X})$.
   \end{theorem}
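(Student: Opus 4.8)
The plan is to lift the equation to a smooth resolution, solve a degenerate complex Monge-Amp\`ere equation there by pluripotential theory, and then descend and regularize. First I would fix a resolution of singularities $\bar{\pi}:\bar{X}\to X$ with $\bar{X}$ a compact K\"ahler manifold, and set $\theta=\bar{\pi}^{*}\omega_{0}$, a smooth closed semi-positive $(1,1)$-form whose class is nef and big: it is big because $\int_{\bar{X}}\theta^{n}=\int_{X}\omega_{0}^{n}>0$. The decisive consequence of the hypothesis that $X$ has at most canonical singularities is the good behaviour of the pulled-back volume form. Writing $\mathcal{K}_{\bar{X}}=\bar{\pi}^{*}\mathcal{K}_{X}+\sum_{E}a_{E}E$ with $a_{E}\geq 0$, and using that $\mathcal{K}_{X}$ is trivialized by $\Omega$, one sees that $\bar{\pi}^{*}\big((-1)^{\frac{n^{2}}{2}}\Omega\wedge\overline{\Omega}\big)$ has density $\prod_{E}|s_{E}|^{2a_{E}}$ times a smooth strictly positive volume form on $\bar{X}$, where $s_{E}$ cuts out $E$. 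Since each $a_{E}\geq 0$ this density is bounded, hence lies in $L^{p}(\bar{X})$ for every $p\geq 1$. On $\bar{X}$ we are thus reduced to solving $(\theta+\sqrt{-1}\partial\overline{\partial}\bar{\varphi})^{n}=\bar{\pi}^{*}\mu$ for the degenerate class $\theta$ against an $L^{p}$ measure of the correct total mass $\int_{\bar{X}}\theta^{n}=\int_{\bar{X}}\bar{\pi}^{*}\mu$.

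The heart of the matter is the a priori $L^{\infty}$ estimate for $\bar{\varphi}$. Since $\theta$ is only nef and big rather than K\"ahler, Yau's maximum-principle estimates do not apply directly; instead I would run Ko\l{}odziej's pluripotential argument as extended by Eyssidieux, Guedj and Zeriahi to big semi-positive classes. One introduces the Monge-Amp\`ere capacity relative to $\theta$, uses the comparison principle to control the capacity of the sublevel sets $\{\bar{\varphi}<-t\}$, and combines the $L^{p}$ bound on the density with a De Giorgi-type iteration to show that this capacity vanishes once $t$ exceeds a threshold depending only on $p$, $\|\bar{\pi}^{*}\mu\|_{L^{p}}$ and the fixed geometry. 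This yields $\|\bar{\varphi}\|_{L^{\infty}}\leq C$ independently of any regularization, and is the main obstacle of the whole argument.

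With the uniform bound available, existence follows by approximation: perturb to the genuine K\"ahler forms $\theta_{\varepsilon}=\theta+\varepsilon\,\omega_{\bar{X}}$ (for a fixed K\"ahler form $\omega_{\bar{X}}$ on $\bar{X}$), regularize the density and renormalize it to the total mass $\int_{\bar{X}}\theta_{\varepsilon}^{n}$, solve the resulting non-degenerate equations by the Calabi-Yau Theorem~\ref{calabi-yau}, and let $\varepsilon\to 0$. The uniform $L^{\infty}$ estimate together with the continuity of the Monge-Amp\`ere operator along bounded monotone sequences (Bedford-Taylor theory) shows that the limit $\bar{\varphi}$ is a bounded $\theta$-plurisubharmonic solution. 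Uniqueness under the normalization $\sup_{\bar{X}}\bar{\varphi}=0$ I would deduce from the comparison principle in the big setting. Being $\theta$-plurisubharmonic and bounded, $\bar{\varphi}$ is constant along the connected fibers of $\bar{\pi}$ over $X_{sing}$, on which $\theta$ degenerates, and therefore descends to a bounded function $\varphi$ on $X$ with $\sup_{X}\varphi=0$ and $\varphi\geq -C$.

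It remains to establish the regularity asserted. On $X_{reg}$ the equation is the non-degenerate Monge-Amp\`ere equation $\omega^{n}=(-1)^{\frac{n^{2}}{2}}\Omega\wedge\overline{\Omega}$ with smooth strictly positive right-hand side, so Yau's estimates for the complex Monge-Amp\`ere equation, followed by Evans-Krylov and Schauder bootstrapping, upgrade the bounded solution to a smooth K\"ahler metric there, and taking $\sqrt{-1}\partial\overline{\partial}\log$ of both sides of the equation forces $\Ric(\omega)=0$ on $X_{reg}$ since $\Omega$ is a nowhere vanishing holomorphic volume form. The genuinely delicate point, by contrast, is the continuity of $\varphi$ across $X_{sing}$: this is obtained in a companion work \cite{EGZ2} by refining the capacity estimates into a uniform modulus of continuity, or equivalently through stability estimates comparing $\varphi$ with continuous approximants. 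I expect the degenerate $L^{\infty}$ estimate and this global continuity to be the two substantial difficulties; the interior regularity on $X_{reg}$ is, by comparison, routine elliptic theory.
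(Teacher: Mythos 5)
The paper itself offers no proof of Theorem \ref{EGZ}: it is stated as a recalled result of \cite{EGZ}, with the continuity of $\varphi$ attributed to \cite{EGZ2}. Measured against the strategy of those sources, your proposal is a faithful reconstruction: the passage to a resolution $\bar{\pi}:\bar{X}\to X$, the observation that canonical singularities make the density of $\bar{\pi}^{*}\bigl((-1)^{\frac{n^{2}}{2}}\Omega\wedge\overline{\Omega}\bigr)$ bounded (hence $L^{p}$ for all $p$), the Ko\l odziej-type capacity argument for the uniform $L^{\infty}$ bound in a nef and big class, the approximation by K\"ahler forms $\theta+\varepsilon\,\omega_{\bar{X}}$ combined with Bedford--Taylor convergence, the descent to $X$ by constancy of plurisubharmonic functions on the compact connected fibers of $\bar{\pi}$, and the deferral of global continuity to \cite{EGZ2} are all exactly the moves made there.

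One step is stated too casually. You say that on $X_{reg}$ Yau's estimates followed by Evans--Krylov and Schauder ``upgrade the bounded solution'' to a smooth metric and call this routine elliptic theory. Local regularity for the complex Monge--Amp\`ere equation is not routine: a bounded, even continuous, plurisubharmonic weak solution of $(\ddbar u)^{n}=f$ with $f$ smooth and positive need not be smooth, because the equation is elliptic only at solutions already known to be $C^{2}$ with positive definite complex Hessian. The correct route --- and the one taken in \cite{EGZ} --- is global-to-local through the approximation scheme you have already set up: one proves uniform second-order estimates for the smooth approximants $\bar{\varphi}_{\varepsilon}$ on compact subsets of $\bar{\pi}^{-1}(X_{reg})$ away from the exceptional locus, inserting a barrier of the form $A\log|s_{E}|_{h}^{2}$ into the Yau--Aubin $C^{2}$ estimate to absorb the degeneration of $\theta$, and only then applies Evans--Krylov and Schauder to the approximants before passing to the limit. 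Since your approximation scheme supplies all the needed objects, this is an omitted key lemma (the barrier-localized $C^{2}$ estimate) rather than a wrong turn, but as written the final regularity paragraph does not close.
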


    The K\"{a}hler-Einstein metric $\omega$ gives a metric space structure on $X_{reg}$, and however, unlike the smooth Calabi-Yau case, it is unclear whether $\omega$ induces a compact metric space structure on $X$. If $X$ has only orbifold  singularities, then $\omega$ coincides with the orbifold   Ricci-flat K\"{a}hler-Einstein metric obtained previously in \cite{KT}, which induces a metric space structure on $X$.

    In the general case   of Theorem \ref{EGZ},  little is known for  the asymptotic behaviour of  $\omega$ and $\varphi$ near the singular locus $X_{sing}$.  Assume that $X$ is a conifold, i.e. $X$ has only ordinary double points as singularities.  For any $p\in X_{sing}$, there is a neighborhood $U_{p}\subset X$ isomorphic to an open subset of the quadric $\mathcal{Q}=\{(z_{0}, \cdots z_{n})\in \mathbb{C}^{n+1} |z_{0}^{2}+\cdots +z_{n}^{2}=0\}$.  A complete Ricci-flat K\"{a}hler-Einstein metric $\omega_{co}$ is constructed on     $\mathcal{Q}$ in \cite{Sten} and previously in \cite{Ca} for $n=3$, which is a cone metric.   More precisely,  if $\rho=|z_{0}|^{2}+\cdots +|z_{n}|^{2}$, then \begin{equation}\label{eco} \omega_{co}=\sqrt{-1}\partial\overline{\partial} \rho^{1-\frac{1}{n}}.\end{equation} It is speculated (cf. \cite{Wi}) that  $\omega$ is asymptotic to $\omega_{co}$ near the singular point $p$ in a certain sense, for example, $\|\omega-\omega_{co}\|_{C^{0}(U_{p}, \omega_{co})} \leq C \rho^{\varepsilon}$ for some constants $C>0$ and $\varepsilon >0$.   It is clearly true for the case of $n=2$, in which $\omega_{co}$ is the standard flat orbifold metric on $\mathbb{C}^{2}/\mathbb{Z}_{2}$, and however, it is still open for the higher dimensional case.

    Now we  apply Theorem \ref{EGZ} to obtain some  canonical  embeddings of Calabi-Yau varieties.
     Let $X$ is a Calabi-Yau $n$-variety.     If $L$ is an ample line bundle on $X$,  there is an  $m>0$ such that $L^{m}$ is very ample, and $H^{i}(X, L^{\mu})=\{0\}$ for any $i>0$ and $\mu \geq m$.
  A basis $\Sigma=\{s_{0}, \cdots, s_{N}\}$ of $H^{0}(X, L^{m})$ gives an embedding $\Phi_{\Sigma}:X \hookrightarrow \mathbb{CP}^{N}$ by $x \mapsto [s_{0}(x), \cdots, s_{N}(x)]$, which satisfies  $L^{m}=\Phi_{\Sigma}^{*}\mathcal{O}_{\mathbb{CP}^{N}}(1)$,  where $N=\dim_{\mathbb{C}}H^{0}(X, L^{m})-1$.
 The pullback  $\omega_{\Sigma}=\Phi_{\Sigma}^{*}\omega_{FS}$ of the Fubini-Study metric is a smooth K\"{a}hler metric in the above sense such that $[\omega_{\Sigma}]=m c_{1}(L)\in NS_{\mathbb{R}}(X)$. The Hermitian metric $h_{FS}$ of $\mathcal{O}_{\mathbb{CP}^{N}}(1)$,  whose curvature is the Fubini-Study metric,  restricts to an  Hermitian metric $h_{\Sigma}=\Phi_{\Sigma}^{*}h_{FS}$ on $L^{m}$, which satisfies that $\omega_{\Sigma}=-\frac{\sqrt{-1}}{2}\partial\overline{\partial}\log |\vartheta|_{h_{\Sigma}}^{2}$ on $X_{reg}$ for any local section $\vartheta$ of $L^{m}$.   We   regard $\Phi_{\Sigma}(X)$ as a   point in    $\mathcal{H}il_{N}^{P}$, denoted still by  $\Phi_{\Sigma}(X)$, where   $\mathcal{H}il_{N}^{P}$ is the Hilbert scheme  parametrizing  subschemes of $\mathbb{CP}^{N}$ with the  Hilbert polynomial $P=P(k)=\chi (X,L^{mk})$.

  Let $\omega=\omega_{\Sigma}+\sqrt{-1}\partial\overline{\partial }\varphi$ be the
 singular   Ricci-flat K\"{a}hler-Einstein metric obtained in Theorem \ref{EGZ}. Note that  $\omega$ is  unique in $m c_{1}(L)$, and particularly is  independent of the choice of $\Phi_{\Sigma}$.    By the boundedness of $\varphi$, we have that $h=\exp(-\varphi) h_{\Sigma}$ is a singular   Hermitian metric on $L^{m}$ whose curvature is $\omega$.
We define an  $L^{2}$-norm $\| \cdot \|_{L^{2}(h)}$ on $H^{0}(X,L^{m})$ by
 $$\| s\|_{L^{2}(h)}^{2}=\int_{X}|s|^{2}_{h} \omega^{n}=\int_{X}e^{-\varphi}|s|^{2}_{h_{\Sigma}} \omega^{n}. $$  If $h'$ is another Hermitian metric with the same curvature $\omega$, then $\partial\overline{\partial}\log \frac{h}{h'}\equiv 0$, i.e.   $\log \frac{h}{h'}$ is a pluriharmonic function on a closed normal variety $X$,  and thus $h=e^{\varsigma}h'$ for a constant $\varsigma$.  If $\Sigma_{h}=\{s_{0}, \cdots, s_{N}\}$ is an  orthonormal basis    of $H^{0}(X,L^{m})$ with respect  to $\| \cdot \|_{L^{2}(h)}$, then $\Sigma_{h'}=\{e^{-\frac{\varsigma}{2}}s_{0}, \cdots, e^{-\frac{\varsigma}{2}}s_{N}\}$ is  orthonormal with  respect  to $\| \cdot \|_{L^{2}(h')}$, and therefore  $\Sigma_{h}$ and $\Sigma_{h'}$ induce the same embedding $ \Phi_{\Sigma_{h}}= \Phi_{\Sigma_{h'}}$.

 If $\Sigma_{h}$ and $\Sigma_{h}'$ are
     two orthonormal bases   of $H^{0}(X,L)$ with  respect  to $\| \cdot \|_{L^{2}(h)}$, there is an  $u\in SU(N+1)\subset SL(N+1)$ such that  $[\Sigma_{h}]=[\Sigma_{h}']\cdot u$, $\Phi_{\Sigma_{h}'}(x)=\Phi_{\Sigma_{h}}(x)\cdot u$ for any $x\in X$, and thus $\Phi_{\Sigma_{h}'}(X)=\Phi_{\Sigma_{h}}(X)\cdot u$ in
     $\mathcal{H}il_{N}^{P}$, where $-\cdot-: \mathcal{H}il_{N}^{P} \times SL(N+1) \rightarrow \mathcal{H}il_{N}^{P} $ is the  $SL(N+1)$-action on $\mathcal{H}il_{N}^{P}$  induced by the natural $SL(N+1)$-action on $ \mathbb{CP}^{N}$.   We denote the
       $SU(N+1)$-orbit  \begin{equation}\label{eor}  RO(X,L^{m})=\{\Phi_{\Sigma_{h}}(X)\cdot u| u\in SU(N+1) \}\subset  (\mathcal{H}il_{N}^{P})_{red},   \end{equation} where $(\mathcal{H}il_{N}^{P})_{red}$ denotes the reduced variety of the Hilbert scheme $\mathcal{H}il_{N}^{P}$.    Note that  $ RO(X,L^{m})$  is compact under the analytic topology of $(\mathcal{H}il_{N}^{P})_{red}$,  and depends only on the  singular K\"{a}hler metric  $\omega$,  but not on the choice of $h$.

       The embedding constructed here will be used in Section 4 for the completion of the  moduli space of  polarized Calabi-Yau manifolds.

 \subsection{Gromov-Hausdorff topology}
 This subsection recalls  the notion and results of  Gromov-Hausdorff topology, which is the other main ingredient of the  works  surveyed in this paper.

  In  the 1980's, Gromov introduced the notion of  Gromov-Hausdorff
distance $d_{GH} $   between metric spaces in \cite{Gro}, which provides a frame to study families  of Riemannian manifolds.
 For any two compact metric spaces $A$ and $B$, the Gromov-Hausdorff distance of $A$ and $B$ is  $$d_{GH}(A,B)=\inf\{d_{H}^{Z}(A,B)| \exists \  {\rm isometric \ embeddings} \ A, B\hookrightarrow Z\}, $$ where $Z$ is a metric space,  $d_{H}^{Z}(A,B)$ is the standard  Hausdorff distance between $A$ and $B$ regarded as subsets by the isometric embeddings, and the infimum is taken for all possible $Z$ and isometric  embeddings. We denote $\mathcal{M}et$   the space of the isometric equivalent  classes of  all compact metric spaces equipped with the  topology, called the Gromov-Hausdorff topology,  induced by the Gromov-Hausdorff distance $d_{GH}$. Then $\mathcal{M}et$ is a complete metric space (cf. \cite{Gro,Rong}).

The Gromov-Hausdorff
distance is used to obtain   many important progresses of Riemannian geometry in the last three  decades, and the readers are referred to  the
survey articles \cite{Fu,Rong,C}  for these achievements.
 The following is the
  Gromov pre-compactness theorem:

   \begin{theorem}[\cite{G1}]\label{G1}
   Let $(X_{k}, g_{k})$ be a
   family of compact Riemannian
  manifolds such that Ricci curvatures and
  diameters satisfy   $${\rm Ric}(g_{k})\geq -C , \  \  \  {\rm diam}_{g_{k}}(X_{k}) \leq D, $$ for  constants  $C$ and $D$  independent of $k$. Then, a subsequence of $(X_{k},
  g_{k})$ converges to a compact  metric space $(Y, d_{Y})$  in the Gromov-Hausdorff
  sense.
  \end{theorem}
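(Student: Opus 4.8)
The plan is to deduce the statement from Gromov's abstract characterization of precompact families in the Gromov-Hausdorff topology, reducing the entire problem to a single uniform covering estimate that the Ricci lower bound supplies. Recall that a family of compact metric spaces is precompact in $\mathcal{M}et$ if and only if it is \emph{uniformly totally bounded}: the diameters are uniformly bounded, and for every $\varepsilon>0$ there is an integer $N(\varepsilon)$, independent of the member of the family, such that each space can be covered by at most $N(\varepsilon)$ balls of radius $\varepsilon$. The diameter bound is given by hypothesis, so the whole content lies in producing such a $k$-independent $N(\varepsilon)$, after which one extracts the convergent subsequence by an abstract argument and invokes completeness of $\mathcal{M}et$ to guarantee the limit is a genuine compact metric space $(Y,d_Y)$.

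To produce $N(\varepsilon)$ I would use the Bishop-Gromov volume comparison theorem, which is exactly where $\Ric(g_{k})\ge -C$ enters. Fixing the common dimension $n$ and letting $V_{C,n}(r)$ denote the volume of a metric $r$-ball in the simply connected $n$-dimensional space form of constant curvature $-C/(n-1)$, Bishop-Gromov asserts that for each $p$ the ratio $\Vol_{g_{k}}(B_{g_{k}}(p,r))/V_{C,n}(r)$ is non-increasing in $r$. The key consequence is a packing estimate. Choose a maximal $\varepsilon$-separated set $\{p_{1},\dots,p_{m}\}\subset X_{k}$; then the balls $B_{g_{k}}(p_{i},\varepsilon/2)$ are pairwise disjoint, while the balls $B_{g_{k}}(p_{i},\varepsilon)$ cover $X_{k}$ by maximality. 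Since $\mathrm{diam}_{g_{k}}(X_{k})\le D$ forces the $D$-ball about any point to exhaust $X_{k}$, monotonicity of the volume ratio gives
$$\Vol_{g_{k}}(B_{g_{k}}(p_{i},\varepsilon/2))\ \ge\ \frac{V_{C,n}(\varepsilon/2)}{V_{C,n}(D)}\,\Vol_{g_{k}}(X_{k}).$$
Summing over the disjoint balls yields $\Vol_{g_{k}}(X_{k})\ge m\,\frac{V_{C,n}(\varepsilon/2)}{V_{C,n}(D)}\,\Vol_{g_{k}}(X_{k})$, and the total volume cancels, leaving
$$m\ \le\ \frac{V_{C,n}(D)}{V_{C,n}(\varepsilon/2)}\ =:\ N(\varepsilon),$$
a bound depending only on $C$, $D$, $n$, $\varepsilon$ and not on $k$. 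Hence each $X_{k}$ is covered by at most $N(\varepsilon)$ balls of radius $\varepsilon$, which is precisely uniform total boundedness.

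The remaining step is the abstract construction of the limit. With uniform total boundedness in hand I would run the standard diagonal argument: in each $X_{k}$ choose finite nets realizing the $\varepsilon=1/j$ coverings, of cardinality at most $N(1/j)$; the arrays of mutual distances among these net points lie in a fixed compact box, so a diagonal subsequence makes all of them converge. The limiting distance data define a metric on a countable set whose completion is the desired compact $(Y,d_{Y})$, and one checks that the realized nets converge to nets in $Y$, giving $d_{GH}(X_{k},Y)\to 0$. Completeness of $\mathcal{M}et$, already recorded above, packages this conclusion cleanly.

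I expect the genuine difficulty to be concentrated in the geometric step, namely establishing the $k$-independent covering number, which is where the lower Ricci bound is indispensable and is extracted through Bishop-Gromov comparison. The cancellation of $\Vol_{g_{k}}(X_{k})$ in the packing inequality is the crucial point that makes the estimate uniform without any non-collapsing hypothesis; by contrast, the abstract precompactness criterion and the diagonal construction of $(Y,d_{Y})$ are formal once uniform total boundedness is available.
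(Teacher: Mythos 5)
The paper offers no proof of this statement---it is quoted verbatim from Gromov's book \cite{G1}---and your argument is precisely the standard one found there: the Bishop--Gromov packing estimate yields a $k$-independent covering number $N(\varepsilon)$, hence uniform total boundedness, and the diagonal argument then produces the compact limit space. The proof is correct; the only point worth noting is that the uniformity of $N(\varepsilon)$ requires the dimension $n$ to be fixed across the family, which you assume explicitly and which is implicit in the paper's statement and in every application made of the theorem there.
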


If we further assume that there is a lower bound of volumes in this theorem, i.e. ${\rm Vol}_{ \omega_{k}}( X_{k})\geq v>0$ for a  constant $v$, then the Bishop-Gromov comparison theorem shows
 $${\rm Vol}_{g_{k}}(B_{g_{k}}(p,1))\geq \kappa,$$ for a constant $\kappa>0$, and thus  $(X_{k}, g_{k})$ is non-collapsed.
   In the non-collapsing case,  the structure of the limit space $Y$  is studied
  in \cite{CC1,CC2,CT,CCT}, and many  properties of $Y$ is obtained, for example, the Hausdorff dimension of  $Y$ is  the same with that  of  $X_{k}$.

 Now let's focus on  compact  Ricci-flat  K\"{a}hler-Einstein  manifolds.
  The Gromov pre-compactness theorem  shows that a   family of
  compact Ricci-flat  K\"{a}hler-Einstein  manifolds with a  uniform upper bound of  diameters  converges to a compact  metric
  space by passing to a subsequence. Regularities of the limit is studied   in \cite{CCT,CT}, and it is shown that it  is almost a Ricci-flat manifold.

 \begin{theorem}[\cite{CCT,CT}]\label{CC}
 Let $(X_{k}, \omega_{k})$ be a
   family of compact  Ricci-flat  K\"{a}hler-Einstein  $n$-manifolds, and  $(Y,
   d_{Y})$ be a compact  metric space such that $$(X_{k}, \omega_{k})  \stackrel{d_{GH}}\longrightarrow
  (Y, d_{Y}).
   $$ If $${\rm Vol}_{ \omega_{k}}( X_{k})\geq v>0, \ \ {\rm and } \ \  c_{2}(X_{k})\cdot [ \omega_{k}]^{n-1}
   \leq C,$$ for  constants $v$ and  $C$
   independent of $k$, where  $ c_{2}(X_{k})$ is the second Chern-class of $X_{k}$, then   there is a closed subset $S\subset Y$ with
   Hausdorff dimension $\dim_{\mathcal{H}}S\leq 2n-4 $ such that $Y\backslash S
   $ is a Ricci-flat  K\"{a}hler-Einstein  $n$-manifold. Furthermore, off a subset of $S$ with $(2n-4)$-dimensional Hausdorff
measure zero, $S$ has only  orbifold type singularities
$\mathbb{C}^{n-2}\times \mathbb{C}^{2}/\Gamma$, where $\Gamma$ is a
finite subgroup of $ SU(2)$.
 \end{theorem}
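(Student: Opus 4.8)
The plan is to combine the noncollapsed structure theory of Cheeger--Colding with a uniform $L^2$ bound on the full curvature tensor, the latter being exactly what the hypothesis on $c_2(X_k)\cdot[\omega_k]^{n-1}$ supplies in the Ricci-flat setting. First I would note that the volume lower bound together with the Bishop--Gromov inequality forces the sequence to be noncollapsed, so the structure results quoted after Theorem~\ref{G1} apply: the limit $(Y,d_Y)$ has Hausdorff dimension $2n$, the renormalized Riemannian measures converge to the $2n$-dimensional Hausdorff measure of $Y$, every tangent cone of $Y$ is a metric cone, and $Y$ carries the Cheeger--Colding stratification $S^0\subseteq S^1\subseteq\cdots\subseteq S$, where $S^j$ is the set of points at which no tangent cone splits off a Euclidean factor $\mathbb{R}^{j+1}$, and $\dim_{\mathcal H}S^j\le j$. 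The regular set $\mathcal R=Y\setminus S$, the points admitting $\mathbb{R}^{2n}$ as a tangent cone, is open and of full measure.

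Second, I would convert the topological hypothesis into an analytic one. Since each $\omega_k$ is Ricci-flat K\"ahler, its Ricci form vanishes, so the Chern--Weil integrand representing the characteristic number $c_2(X_k)\cdot[\omega_k]^{n-1}$ reduces to a nonnegative multiple of the pointwise squared norm of the Riemann curvature. The hypothesis thus yields a uniform bound $\int_{X_k}|\mathrm{Rm}_{\omega_k}|^2\,\omega_k^{n}\le C'$. This $L^2$ curvature bound --- a codimension-four type bound, not the scale-invariant $L^{n}$ bound --- is the decisive extra input beyond the two-sided Ricci bound.

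Third, both the smoothness of $\mathcal R$ and the codimension-four estimate for $S$ rest on the $\varepsilon$-regularity theorem for Einstein metrics (Anderson; Cheeger--Colding--Tian): there exist $\varepsilon_0,C_0>0$, depending only on $n$, so that whenever a ball $B(p,r)$ is $\varepsilon_0 r$-Gromov--Hausdorff close to a Euclidean ball and carries small scale-normalized $L^2$ curvature, the concentric ball $B(p,r/2)$ is smooth with $|\mathrm{Rm}|\le C_0 r^{-2}$. Feeding the convergence into this statement, I would conclude that on $\mathcal R$ the metrics $g_k$ converge in $C^\infty$ to a smooth Ricci-flat metric; the $g_k$-parallel complex structures $J_k$ are controlled by the metrics and converge to a $g_\infty$-parallel $J_\infty$, so $Y\setminus S$ is a bona fide Ricci-flat K\"ahler-Einstein $n$-manifold.

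The hard part is the bound $\dim_{\mathcal H}S\le 2n-4$, which requires ruling out all singular strata of codimension $1,2,3$. For this I would run the dimension-reduction argument of Cheeger--Colding--Tian: tangent cones are metric cones, and combining the $\varepsilon$-regularity theorem with the uniform $L^2$ bound shows that a tangent cone splitting off only $\mathbb{R}^{2n-1}$, $\mathbb{R}^{2n-2}$, or $\mathbb{R}^{2n-3}$ cannot arise, since it would either concentrate curvature incompatibly with the $L^2$ bound or force a singular link inconsistent with the noncollapsed Einstein structure; hence $S=S^{2n-4}$. Finally, on the top stratum $S^{2n-4}\setminus S^{2n-5}$, whose complement in $S$ has vanishing $(2n-4)$-dimensional Hausdorff measure, every tangent cone splits as $\mathbb{R}^{2n-4}\times C(Z)$ with $C(Z)$ a four-dimensional Ricci-flat cone; its three-dimensional link $Z$ is then an Einstein manifold with $\mathrm{Ric}_Z=2g_Z$, i.e. a spherical space form $S^3/\Gamma$ of volume strictly less than that of $S^3$, and the K\"ahler condition inherited by the tangent cone forces $\Gamma\subset SU(2)$. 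Thus $C(Z)=\mathbb{C}^2/\Gamma$ and the singularity is modeled on $\mathbb{C}^{n-2}\times\mathbb{C}^2/\Gamma$, as asserted.
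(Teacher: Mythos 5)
You should first note that the paper itself offers no proof of this statement: it is quoted from \cite{CCT,CT}, so your attempt has to be measured against the arguments in those references. Your overall architecture matches theirs in several respects --- non-collapsing from the volume bound via Bishop--Gromov, the Cheeger--Colding stratification $S^{0}\subseteq\cdots\subseteq S^{2n-1}=S$ with $\dim_{\mathcal H}S^{j}\leq j$, the conversion of the $c_{2}$ hypothesis into a uniform bound on $\int_{X_{k}}|{\rm Rm}|^{2}\,\omega_{k}^{n}$ by Chern--Weil (valid precisely because ${\rm Ric}(\omega_{k})=0$ kills the Ricci terms in the representative of $c_{2}\wedge\omega^{n-1}$), $\varepsilon$-regularity and smooth convergence on the regular set, and the identification of four-dimensional cone factors $C(S^{3}/\Gamma)$ at generic points of the top singular stratum.

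The genuine gap is in the step $S=S^{2n-4}$, which is the heart of the theorem. You propose to kill the strata of codimension $1,2,3$ by arguing that the corresponding tangent cones would "concentrate curvature incompatibly with the $L^{2}$ bound." This cannot work for the codimension-two stratum: a tangent cone at a point of $S^{2n-2}\setminus S^{2n-3}$ has the form $\mathbb{R}^{2n-2}\times C(S^{1}_{\beta})$, which is flat away from a codimension-two set, so it carries no curvature at all for an $L^{2}$ bound to detect; this is exactly why the codimension-four conjecture for general (non-K\"ahler) Einstein limits resisted until Cheeger--Naber, by entirely different means. What actually eliminates these cones in \cite{CCT} is a separate, K\"ahler-specific $\varepsilon$-regularity theorem: for K\"ahler--Einstein metrics, Gromov--Hausdorff closeness of a ball to a ball in a cone splitting off an isometric factor $\mathbb{R}^{2n-2}$ already implies an interior curvature bound, with no $L^{2}$ hypothesis; its proof is a transgression/slicing argument with the \emph{first} Chern form on two-dimensional slices, controlled by the Einstein condition. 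The codimension-three stratum is then empty for the softer reason that the parallel complex structure preserves the space of parallel translation-invariant fields on a tangent cone, so a splitting of $\mathbb{R}^{2n-3}$ automatically upgrades to a splitting of $\mathbb{R}^{2n-2}$. The $L^{2}$ curvature bound is genuinely needed only afterwards, for the final sentence of the theorem: via a slicing argument it yields the $\varepsilon$-regularity relative to cones splitting off $\mathbb{R}^{2n-4}$, and hence the smoothness of the link $Z=S^{3}/\Gamma$ of the four-dimensional cone factor at $\mathcal{H}^{2n-4}$-a.e.\ point of $S$. One further small correction: $\Gamma\subset SU(2)$ does not follow from the K\"ahler condition alone, which only gives $\Gamma\subset U(2)$; it follows from the locally parallel holomorphic volume forms of the Ricci-flat approximating metrics, which pass to the tangent cone and force $\det\Gamma=1$.
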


 Actually, the convergence in this theorem is stronger than the Gromov-Hausdorff convergence. It is in the sense of Cheeger-Gromov on the regular locus, i.e. if we denote  $\omega_{\infty}$ the Ricci-flat  K\"{a}hler-Einstein on $Y\backslash S$ inducing the metric $d_{Y}$, and denote  $J_{k}$ (respectively $J_{\infty}$) the complex structure  of $X_{k}$ (respectively $Y\backslash S$), then for any compact subset $K\subset Y\backslash S$,   there are smooth embeddings $F_{K,k}: K\rightarrow X_{k}$ such that $$F_{K,k}^{*}\omega_{k} \rightarrow \omega_{\infty}, \  \  {\rm and} \  \  F_{K,k}^{*}J_{k}= dF_{K,k}^{-1}\circ J_{k} \circ dF_{K,k}\rightarrow J_{\infty},$$ in the $C^{\infty}$-sense.

 The tangent cone  $Y_{y}$ at  a point $y\in Y$ is defined as a pointed Gromov-Hausdorff limit of $(Y, \mu_{i}d_{Y},y)$ for a sequence $\mu_{i} \rightarrow+\infty$.  In the case of Theorem \ref{CC}, the  tangent cone is  proved to exist for any $y\in Y$ (cf. \cite{CC1,CC2}), and $S$ is defined as the locus where $Y_{y}$ is not isometric to $\mathbb{C}^{n}$.  More importantly, it is shown that any $Y_{y}$ is isometric to $C(M_{y})\times \mathbb{C}^{l}$ where $C(M_{y})$ is the metric cone over a
   compact metric space $M_{y}$ of Hausdorff dimension $2n-2l-1$  (cf. \cite{CC1,CC2,CCT}).

 When $n=2$, Theorem \ref{CC} was obtained previously in \cite{An0,An1,BKN,Na,Tia} etc., where $Y$ has only finite  isolated  orbifold  points as singularities, and $Y$ is clearly a complex  analytic variety. Moreover,  $Y$ is a K3 orbifold, and this fact was used  to study the moduli space of K3 surfaces in \cite{An0,An1}.  It is  expected  that $Y$ is also   a complex  analytic variety in the higher dimensional case, which was  proved by Donaldson and Sun under the assumption that those K\"{a}hler metrics $\omega_{k}$ belong to  integral K\"{a}hler classes (cf. \cite{DS}). We will continue the  discussion on  the Gromov-Hausdorff topology, and  review Donaldson-Sun's  theorem in Section 3.2.

 \section{Convergence of Ricci-flat K\"{a}hler-Einstein metrics}

In this section, we study the relationship between the degeneration in the algebro-geometric sense and the  non-collapsing convergence in the Gromov-Hausdorff sense.

\subsection{Degeneration}
This subsection study the limit behaviour of Ricci-flat  K\"{a}hler-Einstein   metrics along  algebro-geometric degenerations.

 A degeneration of  Calabi-Yau $n$-manifolds $\pi: \mathcal{X}\rightarrow \Delta$  is a flat morphism from a  variety $\mathcal{X}$ of dimension $n+1$  to a disc $ \Delta\subset \mathbb{C}$ such that for any $t\in \Delta^{*}=\Delta \backslash\{0\}$,  $X_{t}=\pi^{-1}(t)$ is a Calabi-Yau manifold,  and  the central fiber   $X_{0}=\pi^{-1}(0)$ is singular. If  there is a  relative  ample line bundle  $\mathcal{L}$   on $\mathcal{X}$, we call it a  degeneration  of polarized Calabi-Yau manifolds, denoted by   $(\pi: \mathcal{X}\rightarrow \Delta, \mathcal{L})$.
   If  we further  assume that  $X_{0}$ is  a Calabi-Yau  variety, then   the total space $\mathcal{X}$ is normal as  any fiber $X_{t}$ is reduced and normal. Thus
      the relative dualizing sheaf $\varpi_{\mathcal{X}/ \Delta}$  is defined, i.e. $\varpi_{\mathcal{X}/ \Delta}\cong \varpi_{\mathcal{X}}\otimes \pi^{*}\varpi_{\Delta}^{-1}$,  and is  trivial, i.e. $\varpi_{\mathcal{X}/ \Delta}\cong \mathcal{O}_{\mathcal{X}}$,   since every fiber is normal,  Cohen-Macaulay and Gorenstein.
    For example, if  $$\mathcal{X}=\{([z_{0},\cdots,z_{n}],t)\in \mathbb{CP}^{n}\times \Delta| t \mathfrak{f}(z_{0},\cdots,z_{n})+ \mathfrak{g}(z_{0},\cdots,z_{n})=0\}, $$ $ t\in \Delta \subset\mathbb{C}$, and  $\pi: \mathcal{X} \rightarrow\Delta$ is  the restriction of the  projection,  where $ \mathfrak{f}$ and $ \mathfrak{g}$  are generic homogenous polynomials  of degree $ n+1$ such that $X_{t}$ is smooth for any $t\in \Delta^{*}$, and $X_{0}$ is a Calabi-Yau variety, then $\pi: \mathcal{X}\rightarrow \Delta$ is a degeneration of Calabi-Yau manifolds, and the restriction $\mathcal{O}_{\mathbb{CP}^{n}}(1)|_{\mathcal{X}}$ of the hyperplane bundle is relative   ample.

Let  $(\pi: \mathcal{X}\rightarrow \Delta, \mathcal{L})$ be a  degeneration of polarized Calabi-Yau manifolds with the central fiber $X_{0}$
  a Calabi-Yau
variety,    and  $\omega_{t}\in c_{1}(\mathcal{L})|_{X_{t}}\in H^{1,1}(X_{t}, \mathbb{R})$, $t\in
\Delta^{*}$,  be  the unique Ricci-flat  K\"{a}hler-Einstein   metric on $X_{t}$.    The limit behaviour of $\omega_{t}$, when $t\rightarrow 0$, is studied in \cite{RuZ} and \cite{RoZ1}. In \cite{RuZ}, it is proved that $\omega_{t}$ converges to the singular Ricci-flat  K\"{a}hler-Einstein   metric $\omega$ obtained in Theorem  \ref{EGZ} under the technique assumption that $X_{0}$ is a Calabi-Yau conifold,  or the total space $\mathcal{X}$ is smooth.  These additional hypothesises  are  removed in
  \cite{RoZ1}, and more precisely, we obtain the following theorem.

\begin{theorem}[\cite{RoZ1}]\label{RZ1}
 Let $(\pi: \mathcal{X}\rightarrow \Delta, \mathcal{L})$ be a  degeneration of polarized Calabi-Yau manifolds  with the central fiber $X_{0}$
  a Calabi-Yau
$n$-variety.
If $\omega_{t}$ denotes the unique Ricci-flat  K\"{a}hler-Einstein   metric
in  $
c_{1}(\mathcal{L})|_{X_{t}}\in H^{1,1}(X_{t}, \mathbb{R})$, $t\in
\Delta^{*}$, and   $\omega$ denotes   the unique
singular Ricci-flat K\"{a}hler-Einstein metric   on $X_{0}$ with    $\omega\in
c_{1}(\mathcal{L})|_{X_{0}} \in H^{1}(X_{0}, \mathcal{PH}_{X_{0}})$,
 then
$$  F_{t}^{*}\omega_{t} \rightarrow  \omega, \  \ \ {\rm
when} \ \
 t \rightarrow 0,$$
 in the $C^{\infty}$-sense on  any compact subset $K \subset
X_{0,reg}$,  where $F_{t}: X_{0,reg} \rightarrow
X_{t}$ is a smooth  family of embeddings with $F_{0}={\rm Id}_{X_{0}}$.
Furthermore, the diameter of $ (X_{t}, \omega_{t})$, $t\in
\Delta^{*}$,   satisfies $${\rm
diam}_{\omega_{t}}(X_{t})\leq D,$$ where $D>0$ is a constant
independent of $t$.
\end{theorem}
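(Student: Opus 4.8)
\emph{The plan is} to realize each $\omega_t$ as the solution of a complex Monge-Amp\`ere equation with a reference geometry inherited from the total space, and then to prove estimates on the potentials that are uniform as $t\to 0$. Fix $m$ so that $\mathcal{L}^m$ is relatively very ample; a basis of relative sections embeds $\mathcal{X}$ into $\mathbb{CP}^N\times\Delta$ over $\Delta$, and $\frac1m$ times the pulled-back Fubini-Study form is a smooth closed $(1,1)$-form $\omega_{\mathcal{X}}$ on $\mathcal{X}$ whose restriction $\omega_{0,t}=\omega_{\mathcal{X}}|_{X_t}$ is a K\"ahler metric in $c_1(\mathcal{L})|_{X_t}$, and a K\"ahler form on $X_0$ in the sense of Theorem \ref{EGZ}. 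Since $\varpi_{\mathcal{X}/\Delta}\cong\mathcal{O}_{\mathcal{X}}$, there is a nowhere-vanishing relative holomorphic volume form $\Omega$ restricting to $\Omega_t$ on each fiber. Writing $\omega_t=\omega_{0,t}+\ddbar\varphi_t$ with $\sup_{X_t}\varphi_t=0$, the Ricci-flat condition becomes
\begin{equation*}
(\omega_{0,t}+\ddbar\varphi_t)^n=a_t\,(-1)^{n^2/2}\Omega_t\wedge\ov{\Omega}_t,
\end{equation*}
where $a_t$ is the constant matching the total volumes, and everything reduces to estimating $\varphi_t$ uniformly in $t$. I would also fix, once and for all, a smooth family of embeddings $F_t\colon X_{0,reg}\to X_t$ with $F_0=\mathrm{Id}$ by trivializing $\mathcal{X}$ over a smaller disc away from $\mathcal{X}_{sing}$ (flowing along a lift of $\partial/\partial t$), which is legitimate since $\pi$ is smooth over $\Delta$ on $\mathcal{X}\setminus\mathcal{X}_{sing}$.

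The first and most delicate step is a uniform $C^0$ estimate $\|\varphi_t\|_{C^0(X_t)}\le C$ with $C$ independent of $t$. I would obtain this from the pluripotential-theoretic methods behind Theorem \ref{EGZ} (Ko\l odziej and Eyssidieux-Guedj-Zeriahi), whose essential input is a uniform $L^p$-bound, for some $p>1$, on the density $f_t=a_t\,(-1)^{n^2/2}\Omega_t\wedge\ov{\Omega}_t/\omega_{0,t}^n$. The point is that $\omega_{0,t}$ does not degenerate as $t\to 0$, since it converges to a genuine K\"ahler form on the compact $X_0$; hence $f_t$ can blow up only near $X_{0,sing}$, and there the canonical-singularity hypothesis (nonnegative discrepancies $a_E\ge 0$) is exactly what forces $f_t\in L^p$ for some $p>1$ uniformly. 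Pulling back to a fixed log resolution of $\mathcal{X}$ turns this into a statement with $t$-independent constants. Securing this uniformity in the presence of a possibly singular total space and worse-than-conifold central singularities is precisely what the extra hypotheses of \cite{RuZ} were avoiding, and I expect it to be the main obstacle.

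Granted the uniform $C^0$ bound, the higher-order estimates are local and standard on a compact $K\subset X_{0,reg}$. Since $K$ avoids the singularities, $F_t^*\omega_{0,t}$ and $F_t^*J_t$ converge smoothly to $\omega_{0,0}$ and $J_0$ with uniformly bounded geometry on $K$, so Yau's (Aubin-Yau) second-order estimate gives $C^{-1}\omega_{0,t}\le\omega_t\le C\omega_{0,t}$ on $K$, and Evans-Krylov together with Schauder theory upgrade this to uniform $C^{k}(K)$ bounds for every $k$. Hence a subsequence of $F_t^*\varphi_t$ converges in $C^\infty_{loc}(X_{0,reg})$ to a bounded $\varphi_\infty$ solving the Monge-Amp\`ere equation of Theorem \ref{EGZ} on $X_{0,reg}$; by the uniqueness part of that theorem $\omega_{0,0}+\ddbar\varphi_\infty=\omega$, and since the limit is independent of the subsequence the whole family $F_t^*\omega_t$ converges to $\omega$ in $C^\infty_{loc}(X_{0,reg})$.

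It remains to bound $\mathrm{diam}_{\omega_t}(X_t)$ uniformly. The volume $\int_{X_t}\omega_t^n=c_1(\mathcal{L})^n\cdot[X_t]$ is a deformation invariant $V$, so an upper volume bound is automatic; combined with $\mathrm{Ric}(\omega_t)=0\ge 0$, the diameter bound follows from a ball-packing argument once a \emph{uniform non-collapsing} estimate $\mathrm{Vol}_{\omega_t}(B_{\omega_t}(x,r_0))\ge\kappa\,r_0^{2n}$ holds at a fixed scale $r_0$ for all $x\in X_t$: a maximal $r_0$-separated set then has at most $V/(\kappa(r_0/2)^{2n})$ points whose $r_0$-balls cover the connected $X_t$, forcing $\mathrm{diam}_{\omega_t}(X_t)\le D$. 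Establishing the non-collapsing uniformly in $t$ is the subtle point, since the geometry near $X_{0,sing}$ degenerates: on the regular part it follows from the $C^\infty_{loc}$ convergence to the fixed non-collapsed $(X_{0,reg},\omega)$, while near the vanishing cycles one needs the tangent-cone structure of canonical singularities (the cone models, e.g.\ $\omega_{co}$ for conifolds) together with a capacity/volume estimate showing that the singular region neither collapses nor lets the diameter escape. I regard the uniform $C^0$ estimate and this non-collapsing/diameter bound as the two genuine difficulties, the former being the heart of removing the hypotheses of \cite{RuZ}.
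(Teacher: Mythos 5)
Your overall architecture (Monge--Amp\`ere equation relative to the restricted Fubini--Study form, uniform $C^{0}$ bound on $\varphi_t$, local Yau/Evans--Krylov/Schauder estimates on compact subsets of $X_{0,reg}$, and identification of the limit via the uniqueness in Theorem \ref{EGZ}) coincides with the paper's. Your route to the $C^{0}$ estimate is genuinely different: the paper does \emph{not} use Ko{\l}odziej-type pluripotential bounds with a uniform $L^{p}$ density; it applies Yau's Moser-iteration $C^{0}$ estimate with the roles of $\omega_t^{o}$ and $\omega_t$ reversed (the trick from \cite{To1}), which needs only the uniform bound on $(-1)^{n^{2}/2}\int_{X_t}\Omega_t\wedge\overline{\Omega}_t$ (Appendix B of \cite{RoZ1}) plus a uniform Sobolev constant --- and the latter is exactly why the diameter bound is proved \emph{first} in the paper. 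Your pluripotential alternative is viable and has been carried out elsewhere, but it requires both a uniform-in-$t$ $L^{p}$ bound on the density (via inversion of adjunction and a log resolution of $\mathcal{X}$, as you indicate) and a family version of Ko{\l}odziej's estimate with constants uniform in the degenerating background $\omega_{0,t}$; neither is free, so this shifts rather than removes the difficulty you flag.

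The genuine gap is in your diameter bound. You propose uniform non-collapsing ${\rm Vol}_{\omega_t}(B_{\omega_t}(x,r_0))\geq \kappa r_0^{2n}$ at \emph{every} point plus a ball-packing argument, and you concede you cannot establish the non-collapsing near the vanishing cycles. As stated this is circular: for Ricci-flat metrics, pointwise non-collapsing at all scales at all points is essentially equivalent (via Bishop--Gromov) to the combination of the volume lower bound and the diameter upper bound you are trying to prove, and the ``tangent-cone/capacity'' input you invoke is not available a priori. The paper's key observation is that \emph{one} good ball suffices: for a connected manifold with ${\rm Ric}\geq 0$ one has
\begin{equation*}
{\rm diam}_{\omega_t}(X_t)\;\leq\; 2+8n\,\frac{{\rm Vol}_{\omega_t}(X_t)}{{\rm Vol}_{\omega_t}(B_{\omega_t}(p_t,1))}
\end{equation*}
for any single point $p_t$ (Theorem 4.1 of Chapter 1 in \cite{SY2}, or Lemma 2.3 in \cite{Pa}). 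The paper produces the required lower bound ${\rm Vol}_{\omega_t}(B_{\omega_t}(p_t,1))\geq c\,e^{\sigma_t}$ at a point $p_t$ lying in a fixed polydisc around a \emph{regular} point of $X_0$ in $\mathcal{X}$, using Lemma 1.3 of \cite{DPS} to control the $\omega_t$-diameter of a large-volume subset of that polydisc and the Monge--Amp\`ere equation to convert Euclidean volume into $\omega_t$-volume; no information near $X_{0,sing}$ is needed. In your ordering you could even extract the one-ball volume bound directly from the $C^{\infty}_{loc}$ convergence on $X_{0,reg}$ already established, but without the single-ball diameter lemma your final step does not close.
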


This theorem shows a nice convergence behaviour of  the Ricci-flat  K\"{a}hler-Einstein   metrics $\omega_{t}$ away from the singular set of $X_{0}$.

Now we outline the proof of Theorem \ref{RZ1}.  Let  $\pi: \mathcal{X}\rightarrow \Delta$ be a Calabi-Yau degeneration with a  relative  ample line bundle  $\mathcal{L}$,  and  $\omega_{t}\in c_{1}(\mathcal{L})|_{X_{t}}\in H^{1,1}(X_{t}, \mathbb{R})$, $t\in
\Delta^{*}$,  be  the unique Ricci-flat  K\"{a}hler-Einstein   metric on $X_{t}$.  There is an embedding $\Phi_{\Delta}: \mathcal{X} \hookrightarrow \mathbb{CP}^{N}\times\Delta$ such that $\mathcal{L}^{m}\cong \Phi_{\Delta}^{*}\mathcal{O}_{\mathbb{CP}^{N}}(1)$ for integers  $m>0$ and $N>0$.    We denote $\omega_{t}^{o}=\frac{1}{m}\Phi_{\Delta}^{*}\omega_{FS}|_{X_{t}}$ for any $t\in\Delta^{*}$, where $\omega_{FS}$ is the Fubini-Study metric on $\mathbb{CP}^{N}$.   We obtained the following proposition in \cite{RoZ1}, which has some independent interests.

 \begin{proposition}\label{dia} Assume that the relative dualizing sheaf    is defined and trivial   $\varpi_{\mathcal{X}/ \Delta}\cong \mathcal{O}_{\mathcal{X}}$.
 The diameter of $\omega_{t}$ satisfies   $$ {\rm diam}_{\omega_{t}}(X_{t})\leq 2+D (-1)^{\frac{n^{2}}{2}}\int_{X_{t}}\Omega_{t}\wedge\overline{\Omega}_{t},$$ where  $\Omega_{t}$ is a relative holomorphic volume form, i.e. a no-where vanishing section of $\varpi_{\mathcal{X}/\Delta}$.
 \end{proposition}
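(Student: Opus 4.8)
The plan is to derive the diameter bound from a Gromov ball--packing argument, using the Ricci--flatness of $\omega_{t}$ in two ways: to rewrite the relevant mass in terms of $\Omega_{t}\wedge\overline{\Omega}_{t}$, and (through $\Ric(\omega_{t})\geq 0$) to have Bishop--Gromov monotonicity available. First I would set $L={\rm diam}_{\omega_{t}}(X_{t})$, choose $p,q\in X_{t}$ with $d_{\omega_{t}}(p,q)=L$ and a minimizing geodesic $\gamma$ joining them, and mark points $x_{0},\dots,x_{k}$ along $\gamma$ with consecutive distances equal to $2$, so that $k\geq (L-2)/2$. Since $\gamma$ is minimizing, the balls $B_{\omega_{t}}(x_{i},1)$ are pairwise disjoint. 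I would then measure these balls \emph{not} with the Riemannian volume but with the fixed Calabi--Yau measure $(-1)^{n^{2}/2}\Omega_{t}\wedge\overline{\Omega}_{t}$. Disjointness gives $\sum_{i}(-1)^{n^{2}/2}\int_{B_{\omega_{t}}(x_{i},1)}\Omega_{t}\wedge\overline{\Omega}_{t}\leq (-1)^{n^{2}/2}\int_{X_{t}}\Omega_{t}\wedge\overline{\Omega}_{t}$, so a uniform lower bound
\[
(-1)^{\frac{n^{2}}{2}}\int_{B_{\omega_{t}}(x,1)}\Omega_{t}\wedge\overline{\Omega}_{t}\ \geq\ c>0 \quad\text{for all } x\in X_{t},\ t\in\Delta^{*},
\]
would force $\tfrac{L-2}{2}\,c\leq (-1)^{\frac{n^{2}}{2}}\int_{X_{t}}\Omega_{t}\wedge\overline{\Omega}_{t}$, which is precisely the asserted inequality with $D=2/c$.

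Second, I would explain why the Calabi--Yau measure is the correct gauge. Because $\Ric(\omega_{t})=0$, the function $\log\!\big(\omega_{t}^{n}/((-1)^{n^{2}/2}\Omega_{t}\wedge\overline{\Omega}_{t})\big)$ is pluriharmonic, hence constant on the compact manifold $X_{t}$; thus $\omega_{t}^{n}=\lambda_{t}\,(-1)^{n^{2}/2}\Omega_{t}\wedge\overline{\Omega}_{t}$ for a constant $\lambda_{t}>0$. The measure appearing in the packing step is therefore $\lambda_{t}^{-1}$ times the Riemannian volume, and the lower bound above is equivalent to the weak non--collapsing estimate $\Vol_{\omega_{t}}(B_{\omega_{t}}(x,1))\geq c\,\lambda_{t}$. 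This is also why the statement is phrased through $\int_{X_{t}}\Omega_{t}\wedge\overline{\Omega}_{t}$ rather than through $\int_{X_{t}}\omega_{t}^{n}=c_{1}(\mathcal{L})^{n}$: the latter is topological and constant along the family, whereas $\lambda_{t}$, equivalently $\int_{X_{t}}\Omega_{t}\wedge\overline{\Omega}_{t}$, is exactly the quantity that detects collapsing and that the bound must be allowed to track.

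The hard part is the uniform lower bound itself. It cannot be obtained by comparing $\omega_{t}$ with the Fubini--Study metric $\omega_{t}^{o}=\tfrac{1}{m}\Phi_{\Delta}^{*}\omega_{FS}|_{X_{t}}$ pointwise, because near the singular locus of $X_{0}$ the metric $\omega_{t}$ degenerates and no inequality $\omega_{t}\leq C\omega_{t}^{o}$ can survive as $t\to 0$; this failure is exactly what forces the measure--theoretic argument above rather than a naive length comparison. The route I would take instead is to produce a uniform Sobolev (Faber--Krahn) inequality: the fibres $X_{t}$ are complex, hence minimal, submanifolds of $(\mathbb{CP}^{N},\omega_{FS})$ of uniformly bounded degree, which yields a Michael--Simon type Sobolev inequality for $(X_{t},\omega_{t}^{o})$ with constant uniform in $t$. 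The genuine obstacle, and the technical heart of the proof, is to transfer this to the Ricci--flat metric $\omega_{t}$: for this I would combine the Ricci--flat Monge--Amp\`ere equation $\omega_{t}^{n}=\lambda_{t}(-1)^{n^{2}/2}\Omega_{t}\wedge\overline{\Omega}_{t}$ with a uniform $C^{0}$ bound on the K\"ahler potential $\varphi_{t}$ of Ko\l{}odziej--Yau type, and then run a Moser iteration, which converts the Sobolev constant into the volume bound $\Vol_{\omega_{t}}(B_{\omega_{t}}(x,1))\geq c\,\lambda_{t}$ (the correct scaling in $\lambda_{t}$ being part of this step). Everything else---the packing, the role of Bishop--Gromov monotonicity in the iteration, and the volume--form identity---is comparatively formal, so the crux is securing this Sobolev constant uniformly as $t\to 0$.
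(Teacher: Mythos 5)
Your packing set-up and the identity $\omega_{t}^{n}=\lambda_{t}(-1)^{n^{2}/2}\Omega_{t}\wedge\overline{\Omega}_{t}$ (pluriharmonic log-ratio, hence constant) are correct, but the proof has a genuine gap at exactly the step you flag as the crux: the uniform lower bound $(-1)^{n^{2}/2}\int_{B_{\omega_{t}}(x,1)}\Omega_{t}\wedge\overline{\Omega}_{t}\geq c$ for \emph{every} $x\in X_{t}$. The route you propose cannot deliver it. First, it is circular relative to the available estimates: in this degenerating family the uniform $C^{0}$ bound on $\varphi_{t}$ is obtained by Moser iteration \`a la Yau, which needs a uniform Sobolev constant for $\omega_{t}$, which is itself deduced (via Croke and non-collapsing) from the very diameter bound you are proving; the paper states this dependence explicitly right after the Proposition. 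Getting the $C^{0}$ bound instead by Ko{\l}odziej-type pluripotential theory requires a uniform $L^{p}$ ($p>1$) bound on the density of $\Omega_{t}\wedge\overline{\Omega}_{t}$ against $(\omega_{t}^{o})^{n}$, which holds when $X_{0}$ has canonical singularities but is not among the hypotheses here --- the Proposition assumes only $\varpi_{\mathcal{X}/\Delta}\cong\mathcal{O}_{\mathcal{X}}$ and is meant to allow $\int_{X_{t}}\Omega_{t}\wedge\overline{\Omega}_{t}\to\infty$. Second, even granting a $C^{0}$ bound on $\varphi_{t}$, a Michael--Simon Sobolev inequality for $(X_{t},\omega_{t}^{o})$ does not transfer to $(X_{t},\omega_{t})$: running the iteration on $\omega_{t}$-balls needs two-sided metric equivalence $C^{-1}\omega_{t}^{o}\leq\omega_{t}\leq C\omega_{t}^{o}$, which, as you yourself note, fails near $X_{0,sing}$ --- and those are precisely the points $x_{i}$ on your geodesic where the bound is needed.

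The paper sidesteps all of this by using a diameter estimate that requires the unit-ball volume lower bound at only \emph{one} point: for $\mathrm{Ric}\geq 0$ one has ${\rm diam}_{\omega_{t}}(X_{t})\leq 2+8n\,{\rm Vol}_{\omega_{t}}(X_{t})/{\rm Vol}_{\omega_{t}}(B_{\omega_{t}}(p_{t},1))$ for any single $p_{t}$ (Theorem 4.1 of Chapter 1 in \cite{SY2}, Lemma 2.3 in \cite{Pa}), rather than your packing inequality, which needs the bound at every marked point. That one point is chosen in a fixed coordinate polydisc $\Delta^{n}\subset\mathcal{X}\setminus X_{0,sing}$, where Lemma 1.3 of \cite{DPS} produces a subset $U_{t,\delta_{t}}$ carrying half the $\omega_{t}^{o}$-volume of the polydisc and of uniformly bounded $\omega_{t}$-diameter; this uses only that $[\omega_{t}]$ is a fixed relatively ample class and no a priori estimate on $\omega_{t}$. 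On that fixed compact set $(-1)^{n^{2}/2}\Omega_{t}\wedge\overline{\Omega}_{t}\geq\kappa_{U}(\omega_{t}^{o})^{n}$, so the Monge--Amp\`ere equation gives ${\rm Vol}_{\omega_{t}}(U_{t,\delta_{t}})\geq Ce^{\sigma_{t}}$ with $e^{\sigma_{t}}=\lambda_{t}$, and Bishop--Gromov shrinks the radius-$r$ ball containing $U_{t,\delta_{t}}$ to a unit ball. To repair your argument, replace the packing step by this one-point estimate and the Sobolev route by the \cite{DPS} localization.
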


  We  sketch  the proof of this estimate.
Note that $
\omega_{t}$ satisfies the Monge-Amp\`{e}re equation
$$ \omega_{t}^{n}=(-1)^{\frac{n^{2}}{2}}e^{\sigma_{t}}\Omega_{t}\wedge
\overline{\Omega}_{ t}, \ \ {\rm where} \ \
e^{\sigma_{t}}=V\left((-1)^{\frac{n^{2}}{2}}\int_{X_{t}}\Omega_{t}\wedge
\overline{\Omega}_{ t}\right)^{-1},
$$  where $V=n!{\rm Vol}_{\omega_{t}}(X_{t})$  is a
constant independent of $t$.
For a point  $p\in X_{0,reg}$, there are coordinates $z_{0}, \cdots,  z_{n}$ on  a
 neighborhood $U$ of $p$ in $ \mathcal{X}$ such that $t=\pi(z_{0}, \cdots,
 z_{n})=z_{0}$ and $p=(0, \cdots, 0)$. There is a $r_{0}>0$ such that $ \Delta \times
 \Delta^{n}\subset U$,  where $\Delta =\{|t|< r_{0}\}\subset
 \Delta$,  $\Delta^{n}=\{|z_{j}|< r_{0},  j =1, \cdots, n\}\subset \mathbb{C}^{n}$, and $\{t\}\times \Delta^{n} \subset
 X_{t} $.
   Note that locally
   $\omega_{t}^{o}$ and $\omega_{t}$ are families of K\"{a}hler
    metrics on $\Delta^{n}\subset \mathbb{C}^{n}$, and there is a constant $C'$ independent of
   $t$ such that
  $$C'^{-1}\omega_{E} \leq\omega_{t}^{o}\leq
   C' \omega_{E}, \ \
  $$  where $ \omega_{E}=\sqrt{-1}\partial\bar{\partial}\sum\limits_{i=1}^{n}
    |z_{i}|^{2}$ is the standard Euclidean K\"{a}hler metric  on $\Delta^{n} $.  Lemma 1.3 in
\cite{DPS} implies that   for any  $ \delta >0$, and any $t\in
\Delta \backslash\{0\}$, there is an
 open subset  $U_{t,\delta}$ of $\Delta^{n}$
  such that $$
{\rm Vol}_{\omega_{t}^{o}}(U_{t,\delta})\geq {\rm
Vol}_{\omega_{t}^{o}}(\Delta^{n})-\delta, \ {\rm  \ } \ {\rm
diam}_{\omega_{t}}(U_{t,\delta}) \leq
\hat{C}\delta^{-\frac{1}{2}},$$ where $\hat{C}$ is  a constant
independent of $t$.

   Let
$\delta_{t}=\frac{1}{2}{\rm Vol}_{\omega_{t}^{o}}(\Delta^{n})$, and let
$p_{t}\in U_{t,\delta_{t}}$. We get
$\delta_{t}\geq \frac{C}{2}{\rm Vol}_{\omega_{E}}(\Delta^{n})=\bar{\delta}$
and thus  $U_{t,\delta_{t}}\subset B_{\omega_{t}}(p_{t},r)
 $, where  $
r=\max \{ 1, 2\hat{C} \bar{\delta}^{- \frac{1}{2}}\}$. Since $U\subset
\mathcal{X}\backslash X_{0,sing}$, there is a constant $\kappa_{U}>0$ such
that
$$(-1)^{\frac{n^{2}}{2}}\Omega_{t}\wedge \overline{\Omega}_{ t}\geq
\kappa_{U}(\omega_{t}^{o})^{n}$$ on $U\cap X_{t}$.
We derive
 \begin{eqnarray*} {\rm Vol}_{\omega_{t}}(B_{\omega_{t}}(p_{t},r)) \geq
 {\rm Vol}_{\omega_{t}}(U_{t,\delta_{t}})  &
 = &
\frac{(-1)^{\frac{n^{2}}{2}}}{n!}e^{\sigma_{t}}\int_{U_{t,\delta_{t}}}\Omega_{t}\wedge
\overline{\Omega}_{ t}\\ & \geq &
\frac{\kappa_{U}e^{\sigma_{t}}}{n!}\int_{U_{t,\delta_{t}}} (\omega_{t}^{o})^{n}\\
&\geq & \frac{\kappa_{U}e^{\sigma_{t}}}{2}{\rm
Vol}_{\omega_{t}^{o}}(\Delta^{n})\\ &\geq & C e^{\sigma_{t}}{\rm
Vol}_{\omega_{E}}(\Delta^{n})= C e^{\sigma_{t}}.\end{eqnarray*}  The  Bishop-Gromov
  comparison theorem  shows that
$${\rm Vol}_{\omega_{t}}(B_{\omega_{t}}(p_{t},1))  \geq
\frac{1}{r^{2n}}{\rm
Vol}_{\omega_{t}}(B_{\omega_{t}}(p_{t},r))\geq
\frac{C}{r^{2n}}e^{\sigma_{t}}.
$$   By  Theorem 4.1 of Chapter 1 in \cite{SY2} or Lemma 2.3 in \cite{Pa},  we
obtain $$ {\rm diam}_{\omega_{t}}(X_{t})\leq 2+ 8n \frac{{\rm
Vol}_{\omega_{t}}(X_{t})}{{\rm
Vol}_{\omega_{t}}(B_{\omega_{t}}(p_{t},1))}\leq 2+
De^{-\sigma_{t}}, $$ and we obtain the estimate.

 Now  we return to the proof of Theorem   \ref{RZ1}.
  If we further assume that the central fiber $X_{0}$ is a Calabi-Yau variety, then it is shown in  Appendix B of \cite{RoZ1} that $(-1)^{\frac{n^{2}}{2}}\int_{X_{t}}\Omega_{t}\wedge\overline{\Omega}_{t}\leq C$ for a constant $C$ independent of $t$, and thus we obtain the diameter estimate in Theorem  \ref{RZ1}.

Let $\varphi_{t}$ be the unique potential function obtained by the Calabi-Yau theorem, which satisfies    $\omega_{t}=\omega_{t}^{o}+\sqrt{-1}\partial\overline{\partial}\varphi_{t}$, and the  Monge-Amp\`ere equation $$ (\omega_{t}^{o}+\sqrt{-1}\partial\overline{\partial }\varphi_{t})^{n}=(-1)^{\frac{n^{2}}{2}}e^{\sigma_{t}}\Omega_{t}\wedge\overline{\Omega}_{t}, \  \  \ \sup_{X_{t}} \varphi_{t}=0.   $$   The standard $C^{0}$-estimate  for the  Monge-Amp\`ere equation in \cite{Y3} gives $\|\varphi_{t}\|_{C^{0}}\leq C$ for a constant $C>0$ independent of $t$ by   reversing the roles of $\omega_{t}^{o}$ and $\omega_{t}$ (a trick originally coming from \cite{To1})    in  Section 3 of  \cite{RoZ1}.   Here we need  the priori   estimate of diameters to control the Sobolev constant.     Then by the $C^{2}$-estimate in \cite{Y3} and the Chern-Lu inequality, we have $$C'\omega_{t}^{o} \leq \omega_{t} \leq C_{K} \omega_{t}^{o},$$ for constants $C'>0$ and $C_{K}>0$ independent of $t$, on $X_{t}\cap K$, where $K$ is a compact subset of $\mathcal{X}\backslash X_{0,sing}$, and $C_{K}$ depends on $K$.  Furthermore, we obtain the $C^{2,\alpha}$-estimate $\|\varphi_{t}\|_{C^{2,\alpha}(K\cap X_{t})}\leq C_{2,K} $ by the Evans-Krylov theory (cf. \cite{Siu}),   and the higher order estimates $\|\varphi_{t}\|_{C^{l}(K\cap X_{t})}\leq C_{l,K} $ by the standard Schauder estimates (cf. \cite{GT}).

The smooth convergence of $\omega_{t}^{o}$ on  $X_{t}\cap K$ to $\omega_{0}^{o}$ on  $X_{0}\cap K$ implies that any sequence $\varphi_{t_{k}}$ has  subsequences convergence to a smooth bounded function $\varphi_{0}$ on $X_{0,reg}$, which satisfies that $ \omega_{0}= \omega_{0}^{o}+\sqrt{-1}\partial\overline{\partial}\varphi_{0}$ is a singular Ricci-flat K\"{a}hler-Einstein metric on $X_{0}$. By the uniqueness in Theorem \ref{EGZ}, $\omega_{0}=\omega$, and  we do not need to pass any sequence, and obtain the smooth convergence of $\varphi_{t}$ (respectively $\omega_{t}$) to $\varphi_{0}$ (respectively $\omega$) on $X_{0,reg}$.

Theorem \ref{RZ1} shows few information of the behaviour of $\omega_{t}$ approaching to the singularities $X_{0,sing}$ of $X_{0}$.   We do not expect the smooth convergence since the topology of the underlying manifold is changed, and thus we like to consider the Gromov-Hausdorff topology.

By the Gromov pre-compactness theorem
(Theorem \ref{G1}),  for any sequence $t_{k}$, a subsequence of $(X_{t_{k}}, \omega_{t_{k}})$ converges to a compact metric space $(Y,d_{Y})$ in the Gromov-Hausdorff sense. Since ${\rm Vol}_{\omega_{t}}(X_{t})= \frac{1}{n!}c_{1}^{n}(\mathcal{L}|_{X_{t}})\equiv {\rm const.}$,  Theorem \ref{CC} shows that    there is a closed subset $S\subset Y$ with
   Hausdorff dimension $\dim_{\mathcal{H}}S\leq 2n-4 $ such that $Y\backslash S
   $ is a Ricci-flat  K\"{a}hler $n$-manifold. By the smooth convergence of $\omega_{t}$ to $\omega$ on $X_{0,reg}$, we construct a local isometric embedding $\iota: (X_{0,reg}, \omega_{0})\hookrightarrow (Y\backslash S, d_{Y})$ in \cite{RoZ2}, and show that the Hausdorff co-dimension of $Y\backslash  \iota (X_{0,reg})$ is bigger or equal to $2$.  Then $\iota (X_{0,reg})$ is almost geodesic convex in $Y$ by \cite{CC2}, i.e. for any two points $y_{1}, y_{2}\in \iota (X_{0,reg})$, and any $\epsilon >0$, there is curve $\gamma \subset  \iota (X_{0,reg})$ such that $y_{1}=\gamma(0)$, $y_{2}=\gamma(1)$, and $${\rm length}_{d_{Y}}(\gamma)\leq d_{Y}(y_{1}, y_{2})+\epsilon.$$ Thus  $(Y, d_{Y}) $ is   the metric completion of
   $(X_{0,reg},\omega) $.

   In summary, we prove the following theorem in  \cite{RoZ2}.

\begin{theorem}[\cite{RoZ2}]\label{RZ2} Let $\pi: \mathcal{X}\rightarrow \Delta$, $X_{0}$, $\omega_{t}$ and $\omega$ be the same as in  Theorem \ref{RZ1}.
We have   $$(X_{t}, \omega_{t})
\stackrel{d_{GH}}\longrightarrow (Y, d_{Y}),  $$ when $t\rightarrow 0$, in the Gromov-Hausdorff sense, where  $(Y, d_{Y}) $ denotes   the metric completion of
   $(X_{0,reg},\omega) $,  which is a compact metric space.
\end{theorem}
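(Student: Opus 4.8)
The plan is to combine Gromov-Hausdorff precompactness with the fine structure theory of non-collapsed Ricci-flat limits (Theorem \ref{CC}) and the smooth convergence already established on the regular locus (Theorem \ref{RZ1}), and then to upgrade a local identification of regular loci into an identification of the completed metric spaces. First I would fix an arbitrary sequence $t_{k}\to 0$ and check the hypotheses of Theorem \ref{G1}: Ricci-flatness gives ${\rm Ric}(\omega_{t_{k}})\equiv 0\geq -C$, while Theorem \ref{RZ1} supplies the uniform bound ${\rm diam}_{\omega_{t_{k}}}(X_{t_{k}})\leq D$, so after passing to a subsequence $(X_{t_{k}},\omega_{t_{k}})\stackrel{d_{GH}}\longrightarrow (Y,d_{Y})$ for some compact $(Y,d_{Y})$. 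To exclude collapsing I would use that ${\rm Vol}_{\omega_{t}}(X_{t})=\frac{1}{n!}c_{1}^{n}(\mathcal{L}|_{X_{t}})$ is a topological intersection number, hence a positive constant $v$ along the flat family; similarly $c_{2}(X_{t})\cdot[\omega_{t}]^{n-1}$ is constant in $t$ since all smooth fibers are diffeomorphic. Theorem \ref{CC} then yields a closed set $S\subset Y$ with $\dim_{\mathcal{H}}S\leq 2n-4$ such that $Y\setminus S$ carries a smooth Ricci-flat K\"{a}hler-Einstein metric $\omega_{\infty}$ inducing $d_{Y}$, with Cheeger-Gromov convergence on the regular part.

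The second step is to identify $X_{0,reg}$ with an open dense part of the regular locus of the limit. By Theorem \ref{RZ1} the embeddings $F_{t_{k}}:X_{0,reg}\to X_{t_{k}}$ satisfy $F_{t_{k}}^{*}\omega_{t_{k}}\to\omega$ smoothly on compacta, while by Theorem \ref{CC} the Cheeger-Gromov embeddings $G_{k}:K'\to X_{t_{k}}$, for compact $K'\subset Y\setminus S$, satisfy $G_{k}^{*}\omega_{t_{k}}\to\omega_{\infty}$ smoothly. Since both sequences of pulled-back metrics converge smoothly to genuine Ricci-flat K\"{a}hler-Einstein metrics, the transition maps $G_{k}^{-1}\circ F_{t_{k}}$ subconverge, by uniqueness of smooth limits, to a local isometric embedding $\iota:(X_{0,reg},\omega)\hookrightarrow(Y\setminus S,\omega_{\infty})$. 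This realizes $(X_{0,reg},\omega)$ as an open Ricci-flat K\"{a}hler-Einstein subset of the limit, so that on $\iota(X_{0,reg})$ the distance $d_{Y}$ agrees locally with the Riemannian distance of $\omega$ and dominates it globally.

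Next I would globalize this identification, and the crucial point is to control the part of $Y$ not covered by the regular locus: I would show that $Y\setminus\iota(X_{0,reg})$ has Hausdorff codimension at least two. This uses both that $S$ already has codimension $\geq 4$ by Theorem \ref{CC} and that $\iota(X_{0,reg})$ exhausts $Y\setminus S$ except for a set arising from $X_{0,sing}$, which sits in codimension $\geq 2$ because the splitting structure of the metric tangent cones (a factor $\mathbb{C}^{l}$, from \cite{CC1,CC2,CCT}) forbids a lower-codimension locus there. With this codimension bound, the almost geodesic convexity theorem of Cheeger-Colding (\cite{CC2}) guarantees that any two points $y_{1},y_{2}\in\iota(X_{0,reg})$ can be joined by curves $\gamma\subset\iota(X_{0,reg})$ with ${\rm length}_{d_{Y}}(\gamma)\leq d_{Y}(y_{1},y_{2})+\epsilon$ for every $\epsilon>0$. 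Hence $d_{Y}$ coincides on $\iota(X_{0,reg})$ with the intrinsic length metric of $\omega$, and since $\iota(X_{0,reg})$ is dense in the compact space $Y$, the limit $(Y,d_{Y})$ is precisely the metric completion of $(X_{0,reg},\omega)$. Because this description depends only on the metric $\omega$, which is unique by Theorem \ref{EGZ}, the limit is independent of the subsequence, so the full family converges and the completion is a compact metric space.

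I expect the genuine difficulty to lie in the codimension-two estimate for $Y\setminus\iota(X_{0,reg})$ together with the verification that no extra limiting pieces appear near $X_{0,sing}$. A priori the metrics $\omega_{t}$ could concentrate curvature and bubble off near the singular fibers, so one must rule out that the Gromov-Hausdorff limit acquires components not detected by the completion of $(X_{0,reg},\omega)$; reconciling the two descriptions of the limit near the singularities is where the volume-noncollapsing bound, the tangent-cone structure of Cheeger-Colding, and the diameter estimate of Proposition \ref{dia} must all be brought to bear, and it is the analytic heart of the argument.
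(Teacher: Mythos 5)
Your proposal follows essentially the same route as the paper's own argument: Gromov precompactness plus the diameter bound of Theorem \ref{RZ1}, non-collapsing from the constancy of ${\rm Vol}_{\omega_{t}}(X_{t})=\frac{1}{n!}c_{1}^{n}(\mathcal{L}|_{X_{t}})$, the Cheeger--Colding structure theory, a local isometric embedding $\iota:(X_{0,reg},\omega)\hookrightarrow Y\setminus S$ built from the smooth convergence on the regular locus, the Hausdorff codimension-two bound on $Y\setminus\iota(X_{0,reg})$, almost geodesic convexity from \cite{CC2}, and uniqueness of $\omega$ to remove the subsequence. You also correctly locate the analytic heart of the proof in the codimension estimate near $X_{0,sing}$, which is exactly the part the paper defers to \cite{RoZ2}.
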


Theorem \ref{RZ2} still offers little understanding of the metric behaviour near the singularities of $X_{0}$, and we expect more explicit asymptotic   behaviours.  Assume that  the Calabi-Yau variety $X_{0}$ in Theorem \ref{RZ1} and Theorem \ref{RZ2}   has only ordinary double points as singularities, i.e. $X_{0}$ is a conifold.    For any $p\in X_{0,sing}$, there is a neighborhood $U_{p}\subset \mathcal{X}$ such that $U_{p}\cap X_{t}$, $t\in\Delta$,  is  isomorphic to an open subset of  $$\mathcal{Q}_{t}=\{(z_{0}, \cdots z_{n})\in \mathbb{C}^{n+1} |z_{0}^{2}+\cdots +z_{n}^{2}=t\}.$$  Note that $\mathcal{Q}_{0}$ is the quadric cone, and for any $t\in\Delta^{*}$, $\mathcal{Q}_{t}$ is diffeomorphic to the total space of the cotangent bundle  $T^{*}S^{n}$ of $S^{n}$.  There is a complete Ricci-flat K\"ahler-Einstein metric
 \begin{equation}\label{eco-s}\omega_{co,t}=\sqrt{-1}\partial\overline{\partial }f_{t}(\rho) \end{equation}
 obtained in \cite{Ca} when $n=3$, and in  \cite{Sten} for the general dimension $n$, where $\rho=|z_{0}|^{2}+\cdots +|z_{n}|^{2}$, and $f_{t}(\rho) $ satisfies the ordinary differential equation $$\rho (f_{t}')^{n}+f_{t}''(f_{t}')^{n-1}(\rho^{2}-|t|^{2})=(\frac{n-1}{n})^{n+1} . $$ This equation can be solve explicitly by changing the  variable $\rho=|t |\cosh \tau$, and integrating $$\frac{d}{d\tau}(f_{t}'(\tau))^{n}=n|t|^{n-1}(\frac{n-1}{n})^{n+1} (\sinh \tau)^{n-1}.  $$
Note that $\omega_{co,0}=\omega_{co}=\sqrt{-1}\partial\overline{\partial} \rho^{1-\frac{1}{n}}$, the Ricci-flat cone metric, on $\mathcal{Q}_{0}$, and when $t\rightarrow 0$, $\omega_{co,t}$ converges smoothly to $\omega_{co}$.  More geometric properties of $\omega_{co,t}$, for example curvatures,  are  studied in Appendix A of \cite{FJY}.

  It is expected (cf. \cite{Wi} etc.) that $\omega_{co,t}$ approximates   $\omega_{t}$ of  Theorem \ref{RZ1} and Theorem \ref{RZ2} in a certain sense when $t\rightarrow 0$, i.e. for instance, $$ \|\omega_{co,t}-\omega_{t}\|_{C^{0}(\omega_{co,t},U_{p}\cap X_{t})}\leq C |t|^{\varepsilon},$$ for constants $C>0$ and $\varepsilon>0$.
Furthermore, $|t|^{-1}\omega_{t}$ should converge  smoothly to $\omega_{co,t_{0}}$ on $\mathcal{Q}_{t_{0}}$ for a $t_{0}\in \Delta^{*}$.  If they are true, there would be many interesting applications, for example the construction of special lagrangian spheres in $(X_{t}, \omega_{t})$ for $|t|\ll 1$.    Since the zero section $S^{n}\subset T^{*}S^{n}$ is a special lagrangian submanifold respecting to $\omega_{co,t}$ and certain holomorphic volume form on $\mathcal{Q}_{t}$, one can  deform   $S^{n}$ in $\mathcal{Q}_{t}$  to a  special lagrangian sphere in $X_{t}$ by the standard deformation technique  of special lagrangian submanifolds (cf. \cite{GHJ}).  As a matter of fact, this is one  goal of the study of metrics   along degenerations in \cite{Wi}.  The strategy to achieve such goal is the gluing method similar to those in Section 2.1. However, there are many difficulties even if the solution of Theorem \ref{EGZ} has the expected asymptotic behaviour (cf. \cite{Cha,Wi}).  Unfortunately, our new approach would not provide any new insight either.

\subsection{Gromov-Hausdorff convergence}
In this subsection, we continue our discussion in Section 2.2 by considering polarized Calabi-Yau  manifolds.

Let  $(X_{k},L_{k})$ be a sequence of polarized Calabi-Yau   manifolds, and  $\omega_{k}\in c_{1}(L_{k})$ be a sequence of Ricci-flat  K\"ahler-Einstien   metrics with  ${\rm diam}_{\omega_{k}}(X_{k})\leq D.$
The Gromov pre-compactness theorem
(Theorem \ref{G1}) asserts that    a subsequence of $(X_{k}, \omega_{k})$ converges to a compact metric space $(Y,d_{Y})$ in the Gromov-Hausdorff sense. Since ${\rm Vol}_{\omega_{k}}(X_{k})=\frac{1}{n!}c_{1}(L_{k})^{n}\geq \frac{1}{n!},$  $(X_{k}, \omega_{k})$ is non-collapsed, and
    Theorem \ref{CC} shows that    there is a closed subset $S\subset Y$  such that
   Hausdorff dimension $\dim_{\mathcal{H}}S\leq 2n-4 $, and  $Y\backslash S
   $ is a Ricci-flat  K\"{a}hler $n$-manifold.

   In \cite{DS}, Donaldson and Sun studied the algebro-geometric properties   of the limit $Y$, and showed that $Y$ is homeomorphic to a projective variety.

 \begin{theorem}[\cite{DS}]\label{D-S}      Let $(X_{k}, L_{k})$ be a sequence  of polarized Calabi-Yau manifolds of dimension $n$, and $\omega_{k}\in c_{1}(L_{k})$ be the unique Ricci-flat K\"ahler-Einstein metric.   We assume that  $${\rm Vol}_{\omega_{k}}(X_{k})=\frac{1}{n!}c_{1}(L_{k})^{n}\equiv\upsilon, \  \   {\rm diam}_{\omega_{k}}(X_{k}) \leqslant D$$ for  constants $D>0$ and $\upsilon>0$, and furthermore,  $$ (X_{k}, \omega_{k})  \stackrel{d_{GH}}\longrightarrow
  (Y, d_{Y})$$  in the Gromov-Hausdorff sense.  Then we have the follows.
    \begin{itemize}
  \item[i)] $Y$ is  homeomorphic to a Calabi-Yau variety $X_{\infty}$.
     \item[ii)]There are constants $m>0$ and $\bar{N}>0$ satisfying the following.  For any $k$,  there is an orthonormal basis $\Sigma_{k}$ of $H^{0}(X_{k}, L_{k}^{m})$ respecting to the $L^{2}$-norm induced by $\omega_{k}$, which induces an embedding $\Phi_{\Sigma_{k}}: X_{k} \hookrightarrow \mathbb{CP}^{\bar{N}}$ with $L_{k}^{m}= \Phi_{\Sigma_{k}}^{*} \mathcal{O}_{\mathbb{CP}^{\bar{N}}}(1)$. And $\Phi_{\Sigma_{k}}( X_{k})$ converges to $X_{\infty}$ in some  Hilbert schemes $\mathcal{H}il_{\bar{N}}^{P}$.
          \item[iii)]  The metric space structure on $Y$ is induced by the unique  singular Ricci-flat K\"{a}hler-Einstein metric $\omega \in \frac{1}{m} c_{1}( \mathcal{O}_{\mathbb{CP}^{\bar{N}}}(1)|_{X_{\infty}})$.
       \end{itemize}
   \end{theorem}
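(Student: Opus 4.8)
The plan is to manufacture the projective variety $X_{\infty}$ by embedding all the $X_{k}$ into a single projective space via sections of a fixed power $L_{k}^{m}$, to take an algebro-geometric limit of the images inside one Hilbert scheme, and then to match this limit with the intrinsic Gromov-Hausdorff limit $Y$. First I would fix the metric structure of $Y$ using Theorem \ref{CC}: the regular locus $Y\setminus S$ is a smooth Ricci-flat K\"ahler $n$-manifold on which the convergence is $C^{\infty}$ in the Cheeger-Gromov sense, $\dim_{\mathcal{H}}S\le 2n-4$, and at every $y\in Y$ the tangent cone exists and splits as $C(M_{y})\times\mathbb{C}^{l}$ for a K\"ahler cone $C(M_{y})$. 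This metric-geometric data is the raw material from which the holomorphic structure will be reconstructed.

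The analytic heart is a \emph{uniform partial $C^{0}$-estimate}: there exist a fixed integer $m>0$ and a constant $b>0$, independent of $k$, such that for every $x\in X_{k}$ one can find a holomorphic section $s\in H^{0}(X_{k},L_{k}^{m})$ with $\|s\|_{L^{2}(\omega_{k})}=1$ and $|s(x)|_{h_{k}^{m}}\ge b$, where $h_{k}$ is the Hermitian metric on $L_{k}$ with curvature $\omega_{k}$. To build such peaked sections I would use the H\"ormander $\overline{\partial}$-technique: near the regular set the local geometry is close to Euclidean $\mathbb{C}^{n}$, so a Gaussian model section may be cut off and corrected by solving $\overline{\partial}$ with an $L^{2}$-weight, the positivity of $L_{k}^{m}$ supplying the Bochner-Kodaira estimate. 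The delicate points are those of $S$, and more generally those whose local geometry is modelled on a nontrivial tangent cone $C(M_{y})\times\mathbb{C}^{l}$. Here I would first construct holomorphic sections of controlled growth on the cone itself, exploiting its canonical scaling and genuine $L^{2}$ K\"ahler potential, and then transplant these model sections onto $X_{k}$ through the Gromov-Hausdorff approximations, correcting by a $\overline{\partial}$-solution whose error is small because the approximation is fine on the relevant annular scale. Making the scale selection uniform in $k$, so that $m$ and $b$ do not degenerate as one approaches $S$, is where the real work lies.

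Granting the uniform partial $C^{0}$-estimate, an $L^{2}$-orthonormal basis $\Sigma_{k}$ of $H^{0}(X_{k},L_{k}^{m})$ with respect to $\omega_{k}$ gives an embedding $\Phi_{\Sigma_{k}}:X_{k}\hookrightarrow\mathbb{CP}^{\bar N}$ with $L_{k}^{m}=\Phi_{\Sigma_{k}}^{*}\mathcal{O}_{\mathbb{CP}^{\bar N}}(1)$, where $\bar N+1$ bounds $\dim_{\mathbb{C}}H^{0}(X_{k},L_{k}^{m})$ uniformly because the volume and diameter are fixed; the estimate guarantees these are genuine embeddings of uniformly bounded degree, so the images are points of a fixed Hilbert scheme $\mathcal{H}il_{\bar N}^{P}$. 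By properness of $\mathcal{H}il_{\bar N}^{P}$ I would pass to a subsequence along which $\Phi_{\Sigma_{k}}(X_{k})$ converges to a closed subscheme $X_{\infty}$. Comparing the intrinsic limit with this algebraic one, I would show they carry the same underlying point set: the $C^{\infty}$-convergence on $Y\setminus S$ identifies the regular stratum isometrically with $X_{\infty,\mathrm{reg}}$, while the lower bound $|s(x)|_{h_{k}^{m}}\ge b$ prevents the images from collapsing or acquiring spurious components over $S$; since each $X_{k}$ is irreducible of the fixed volume $\upsilon$, the limit cycle has multiplicity one, hence $X_{\infty}$ is reduced and irreducible, yielding the homeomorphism $Y\cong X_{\infty}$ of assertion (i).

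Finally I would check that $X_{\infty}$ is a Calabi-Yau variety and deduce (iii). The bound $\dim_{\mathcal{H}}S\le 2n-4$ together with the Ricci-flat K\"ahler structure on the regular part forces $X_{\infty}$ to be normal with at worst canonical singularities; the holomorphic volume forms on $X_{k}$ converge to a nowhere-vanishing section of $\varpi_{X_{\infty}}$ on $X_{\infty,\mathrm{reg}}$ which extends across $S$ by Hartogs-type removability, since $S$ has complex codimension $\ge 2$, giving $\varpi_{X_{\infty}}\cong\mathcal{O}_{X_{\infty}}$. Then $d_{Y}$ restricts on $X_{\infty,\mathrm{reg}}$ to a smooth Ricci-flat K\"ahler-Einstein metric in $\frac{1}{m}c_{1}(\mathcal{O}_{\mathbb{CP}^{\bar N}}(1)|_{X_{\infty}})$ with bounded potential, so by the uniqueness in Theorem \ref{EGZ} it coincides with the Eyssidieux-Guedj-Zeriahi metric $\omega$, proving (iii). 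The main obstacle throughout is the uniform partial $C^{0}$-estimate at the singular scales, namely the construction and transplantation of holomorphic model sections on the tangent cones together with a scale choice that stays uniform in $k$; everything else is a compactness-and-uniqueness package built on top of it.
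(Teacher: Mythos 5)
Your architecture matches Donaldson--Sun's and the paper's sketch of it: Cheeger--Colding structure theory for the limit, a uniform partial $C^{0}$-estimate via peak sections built from tangent-cone models and corrected by the H\"ormander $L^{2}$-method, embeddings into a fixed $\mathbb{CP}^{\bar N}$, compactness of the Hilbert scheme, identification of the algebraic limit with $Y$, and the volume-form/uniqueness argument for the Calabi--Yau property and for (iii). The concluding package (normality and canonical singularities, triviality of $\varpi_{X_{\infty}}$ from convergence of the parallel holomorphic volume forms, and uniqueness in Theorem \ref{EGZ}) is exactly as in the paper.

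The gap is in the transplantation step. You write that the model section on the cone is carried over to $X_{k}$ through the Gromov--Hausdorff approximations and corrected by a $\overline{\partial}$-solution whose error is small because the approximation is fine, and you locate the residual difficulty in making the scale selection uniform in $k$. But the obstruction that dominates the actual proof is gauge-theoretic, not a matter of scale. On the annulus $U=\{\delta\le r\le R\}$ in the cone $C(M_{y})$, the transplanted bundle is topologically trivial and its curvature is close to that of the model $(L_{o},A_{o})$, yet the two connections differ, after the Hodge decomposition on the link $M_{y}$, by a harmonic $1$-form $\alpha_{H}$ representing a point of the torus $H^{1}(M_{y},U(1))$. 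This flat part is not controlled by the curvature and need not be small in any gauge, so the condition $\|A-A_{o}\|_{C^{0}}\ll 1$ needed to verify Property (H) --- and hence the smallness of $\overline{\partial}\sigma$ that makes the H\"ormander correction harmless --- can fail outright whenever $b_{1}(M_{y})\neq 0$. Donaldson and Sun resolve this with Dirichlet's simultaneous-approximation theorem: for some bounded power $1\le\nu\le m$ the multiple $\nu\alpha_{H}$ lands in a fixed small neighbourhood $W$ of the origin of $H^{1}(M_{y},U(1))$, and after replacing the bundle by $L_{k}^{\nu m_{0}}$ and rescaling the cone by the dilation $\mu_{\nu}$ (which preserves $\omega_{y}$, $J_{y}$ and $A_{o}$), Property (H) holds and the peak section exists. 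This is also why the final exponent $m$ arises from finitely many local models covering $Y$ rather than from a single scale choice; without this ingredient your uniform partial $C^{0}$-estimate does not go through at the singular points.
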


   By   Proposition 4.15 of  \cite{DS},   $Y$ is  homeomorphic to   a projective normal variety with only log-terminal singularities, denoted by $X_{\infty}$.  Note that the holomorphic volume forms $\Omega_{k}$ are parallel with respect to $\omega_{k}$, and converge  to a holomorphic volume form $\Omega_{\infty}$ on the regular locus $X_{\infty,reg}$ along the Gromov-Hausdorff convergence by normalizing $\Omega_{k}$ if  necessary. Thus the dualizing sheaf  $\varpi_{X_{\infty}}$ is trivial, i.e.  $\varpi_{X_{\infty}}\cong \mathcal{O}_{X_{\infty}}$, and $X_{\infty}$ is 1--Gorenstein.  Furthermore, the canonical divisor $\mathcal{K}_{X_{\infty}}$ is Cartier and trivial, which implies that $X_{\infty}$ has at worst  canonical singularities.  Then $X_{\infty}$ has only rational singularities,  $X_{\infty}$ is Cohen-Macaulay and is  Gorenstein.  Consequently,  $X_{\infty}$ is  a Calabi-Yau variety.

   We remark that Theorem \ref{D-S}  holds for polarized K\"ahler manifolds with bounded Ricci curvature (cf. \cite{DS}).  Readers are also  referred  to \cite{Tian} for the case of Fano manifolds, and \cite{DS,Tian} for the relevant history and applications.

     There are  some immediate interesting  applications of   Theorem \ref{D-S}.

First of all,  we apply  Theorem \ref{D-S} to the   finiteness  question.
 There is a conjecture due to S.-T. Yau, which says that there are only finite diffeomorphism  types of Calabi-Yau threefolds. There are many evidences to support it, and for example,  \cite{Gross} proves a weak version of this conjecture, i.e.,   up
to birational equivalence,
  elliptic fibred Calabi-Yau threefolds have only finite diffeomorphism  types.
 For any   constant   $D>0$,  we define a   set of polarized Calabi-Yau $n$-manifolds  $$\mathfrak{N}(n,  D)=\{(X,L)|  \omega\in c_{1}(L) \ {\rm with} \ {\rm Ric}(\omega)\equiv 0, \ \ {\rm diam}_{\omega}(X)\leq D \}.  $$  Note that $1\leq c_{1}(L)^{n}=n ! {\rm Vol}_{\omega}(X)\leq V(n,D)$ by the Bishop-Gromov comparison theorem.  Thus Theorem \ref{D-S} implies that  any $(X,L)\in \mathfrak{N}(n,  D)$ can be embedded into a comment   $\mathbb{CP}^{\bar{N}}$, and the image can be regarded as in finite  possible many  components of  Hilbert schemes.  Consequently, elements in  $ \mathfrak{N}(n,  D)$ have  only finite many possible complex  deformation classes, and of course, have  only finite many possible diffeomorphism types (cf. \cite{Bour}).    We will show the  generalization of   this finiteness result to   polarized projective K\"ahler manifolds  with Ricci curvature bounded from below in Appendix A.

Now, we apply  Theorem \ref{D-S} to the situation of Theorem \ref{RZ1} and  Theorem \ref{RZ2}, and obtain that the compact metric space  $Y$  in Theorem \ref{RZ2} is homeomorphic to a Calabi-Yau variety $X_{\infty}$.  Actually, it is easy to see  that  $X_{\infty}$ is isomorphic to  $X_{0}$ of Theorem \ref{RZ1} (cf.  Lemma 2.2 of \cite{zha1}). However it is not trivial since the embeddings in  Theorem \ref{D-S} are induced by orthonormal basis, which are highly transcendental, and would not coincide with algebraic embedding induced by the relative ample line bundle $\mathcal{L}$ from the hypothesis.
 One corollary  is the uniqueness of the filling-in  for degenerations of Calabi-Yau manifolds.

    \begin{corollary}\label{coro}  Let  $(\pi: \mathcal{X}\rightarrow \Delta, \mathcal{L})$ and $(\pi': \mathcal{X}'\rightarrow \Delta, \mathcal{L}')$ be  two degenerations of polarized Calabi-Yau manifolds with  Calabi-Yau  varieties   $X_{0}$ and $X_{0}'$ as the central fibers respectively.  If  there is a sequence of points  $t_{k} \rightarrow 0$ in $\Delta$, and there is a sequence of isomorphism $\psi_{k}: X_{t_{k}}\rightarrow X_{t_{k}}'$ such that $\psi_{k}^{*}\mathcal{L}|_{X_{t_{k}}}\cong  \mathcal{L}'|_{X_{t_{k}}'}$, then $X_{0}$ is isomorphic to $X_{0}'$.
\end{corollary}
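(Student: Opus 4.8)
The plan is to show that, restricted to the sequence $t_k\to 0$, the two families are term-by-term isometric, so that their Gromov--Hausdorff limits coincide, and then to exploit the intrinsic (metric) nature of the Donaldson--Sun construction to extract an isomorphism of the central fibres. First I would upgrade each polarization-preserving biholomorphism $\psi_k$ to a Riemannian isometry. Let $\omega_{t_k}$ and $\omega_{t_k}'$ be the Ricci-flat K\"ahler--Einstein metrics in $c_1(\mathcal{L})|_{X_{t_k}}$ and $c_1(\mathcal{L}')|_{X_{t_k}'}$ respectively. The pullback $\psi_k^*\omega_{t_k}'$ is a Ricci-flat K\"ahler--Einstein metric on $X_{t_k}$ lying in the class $\psi_k^*c_1(\mathcal{L}'|_{X_{t_k}'})=c_1(\psi_k^*\mathcal{L}'|_{X_{t_k}'})=c_1(\mathcal{L}|_{X_{t_k}})$, where the last equality uses the hypothesis that $\psi_k$ intertwines the two polarizations. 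By the uniqueness part of the Calabi--Yau Theorem \ref{calabi-yau} we get $\psi_k^*\omega_{t_k}'=\omega_{t_k}$, so $\psi_k\colon(X_{t_k},\omega_{t_k})\to(X_{t_k}',\omega_{t_k}')$ is an isometry.

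Second, I would pass to Gromov--Hausdorff limits. Both families satisfy the hypotheses of Theorem \ref{RZ1} and Theorem \ref{RZ2}: the volume $\Vol_{\omega_t}(X_t)=\tfrac{1}{n!}c_1(\mathcal{L}|_{X_t})^n$ is a topological constant, and the diameter is uniformly bounded (Proposition \ref{dia}). Hence $(X_{t_k},\omega_{t_k})\to(Y,d_Y)$ and $(X_{t_k}',\omega_{t_k}')\to(Y',d_{Y'})$, where $Y$ and $Y'$ are the metric completions of $(X_{0,reg},\omega)$ and $(X_{0,reg}',\omega')$. Since the two sequences are isometric term by term by the previous step, their Gromov--Hausdorff limits agree: $(Y,d_Y)\cong(Y',d_{Y'})$ as metric spaces.

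Third, I would invoke Theorem \ref{D-S}. Along $t_k$ both families also meet the hypotheses of the Donaldson--Sun theorem, so $Y$ is homeomorphic to a Calabi--Yau variety $X_\infty$ and $Y'$ to $X_\infty'$, with the metric structures induced by singular Ricci-flat K\"ahler--Einstein metrics in $\tfrac{1}{m}c_1(\mathcal{O}(1))$. The essential point is that this algebraic structure is determined by the metric geometry: the embedding of Theorem \ref{D-S}(ii) into $\mathbb{CP}^{\bar{N}}$ is built from an $L^2$-orthonormal basis of the limiting space of holomorphic sections, which is attached to the limit metric canonically, up to the action of $SU(\bar{N}+1)$. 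Thus the isometry $Y\cong Y'$ carries one limiting section space onto the other, induces a projective-linear isomorphism of the two embedded images, and therefore yields an isomorphism $X_\infty\cong X_\infty'$ of polarized Calabi--Yau varieties. Finally, as recalled after Theorem \ref{D-S} (Lemma 2.2 of \cite{zha1}), one has $X_\infty\cong X_0$ and $X_\infty'\cong X_0'$, whence $X_0\cong X_0'$.

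The hard part is the third step: making rigorous that an abstract isometry of the limit metric spaces descends to a biholomorphism of the Donaldson--Sun varieties. This relies on the fact that every ingredient of that construction---the limiting (peak) holomorphic sections, their $L^2$ inner product, and the resulting point of the Hilbert scheme---is intrinsically associated to the pointed metric space, together with the nontrivial identification $X_\infty\cong X_0$ that reconciles the transcendental $L^2$-embedding with the algebraic embedding furnished by $\mathcal{L}$; both are exactly the delicate inputs supplied by Theorem \ref{D-S} and Lemma 2.2 of \cite{zha1}, which I am entitled to use.
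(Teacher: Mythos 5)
Your proposal is correct and follows essentially the same route the paper intends: uniqueness of the Ricci-flat metric in its K\"ahler class makes each $\psi_k$ an isometry of polarized K\"ahler manifolds, Theorem \ref{D-S} together with Lemma 2.2 of \cite{zha1} identifies each Gromov--Hausdorff limit with the corresponding central fibre, and the $SU(\bar{N}+1)$-ambiguity of the $L^{2}$-orthonormal embeddings is precisely what makes the limit point in the Hilbert scheme well defined up to isomorphism. One small streamlining of your third step: rather than arguing that an abstract isometry of the limit metric spaces descends to a biholomorphism, you can apply Donaldson--Sun once to the single sequence of polarized K\"ahler--Einstein manifolds $(X_{t_k},\mathcal{L}|_{X_{t_k}},\omega_{t_k})\cong(X'_{t_k},\mathcal{L}'|_{X'_{t_k}},\omega'_{t_k})$ furnished by your first step, whose Hilbert-scheme limit is then simultaneously isomorphic to $X_{0}$ and to $X_{0}'$.
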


Note that $\mathcal{X}$ may not be birational to $\mathcal{X}'$  in this corollary.   If we have a stronger assumption that $\mathcal{X}\backslash X_{0}$ is isomorphic to $\mathcal{X}'\backslash X_{0}'$, then the conclusion   is a direct  consequence of   \cite[Theorem 2.1]{Bo} and  \cite[Corollary 4.3]{Od}.

 We finish this section by recalling some aspects of  the proof of  Theorem \ref{D-S}.
Let  $(X_{k}, L_{k})$,  $\omega_{k}$  and $(Y,d_{Y})$ be the same as in  Theorem \ref{D-S}.
The main step in  the proof of Theorem \ref{D-S} is to construct uniformly many peak sections of $L_{k}^{\tilde{\nu}}$ on $X_{k}$, for a $\tilde{\nu} \in\mathbb{N}$,  which are also sufficiently   many, and  enough to induce embeddings for not only all of $X_{k}$, but also the limit $Y$. A peak section $s$ of $L_{k}^{\tilde{\nu}}$ means that for a point $p\in X_{k}$, $|s|_{k}(p)\geq 1$,  $\|s\|_{k,L^{2}}^{2}=\int_{X_{k}}|s|_{k}^{2}\sim 1 $, and $|s|_{k}\ll 1$ on the  complement of a small neighborhood of $p$, where $|\cdot |_{k}$ denotes the Hermitian metric on $L_{k}^{\tilde{\nu}}$ with curvature $ \tilde{\nu} \omega_{k}$.  The peak sections are constructed by gluing a local model first, and then using the H\"{o}rmander $L^{2}$-estimate.

  The local model used in \cite{DS} is as following.     For any $y\in Y$, the tangent cone $Y_{y}$ at $y$ is a metric cone $C(M_{y})$ over a compact metric space $M_{y}$, and the regular locus $Y_{y,reg}$ is an open dense subset of $Y_{y}$.  Let $o\in Y_{y}$ be the distinguished vertex point, and let  $r(q)={\rm dist}(q,o)$, i.e. the distance of $o$ to the point $q\in  Y_{y}$.  On $Y_{y,reg}$, the metric is  a Ricci-flat K\"ahler-Einstein cone metric $dr^{2}+r^{2}g_{M}$, where $g_{M}$ is a Sasakian-Einstein metric on the regular locus $M_{y,reg}$.  It is standard that the Ricci-flat K\"ahler-Einstein metric can be written as $\omega_{y}=\frac{\sqrt{-1}}{2}\partial\overline{\partial}r^{2}$ (cf. \cite{Spa}).  If $L_{o}$  is the smooth trivial line bundle on $Y_{y,reg}$, i.e. $L_{o}\simeq Y_{y,reg}\times \mathbb{C}$ in the sense of  smooth bundles, then we define a $U(1)$-connection on $L_{o}$  $$A_{o}=\frac{1}{4}(\overline{\partial}r^{2}-\partial r^{2}), \  \ {\rm whose \ \ curvature} \ \ F_{A_{o}}=-\sqrt{-1}\omega_{y}.    $$ Note that $\overline{\partial}_{A_{o}}=\overline{\partial}+A_{o}^{0,1}$ is a Cauchy-Riemann  operator, i.e. $\overline{\partial}_{A_{o}}^{2}=0$, and $\overline{\partial}_{A_{o}}$  induces a holomorphic structure on $L_{o}$.  If we define  $$\sigma_{o}=e^{-\frac{r^{2}}{4}},  \  \  \ {\rm then} \ \  \ \overline{\partial}_{A_{o}} \sigma_{o}=\overline{\partial} \sigma_{o}+A_{o}^{0,1}\sigma_{o}=0,$$ i.e.
   $\sigma_{o}$ is a holomorphic section of $(L_{o},A_{o})$.  The local model is $(L_{o}, A_{o}, \sigma_{o})$.

   We recall the gluing argument of \cite{DS}, and assume that $M_{y}$ is smooth, i.e. $o$ is an  isolated singular point,   and $H^{1}(M_{y}, \mathbb{Z})$ and $H^{2}(M_{y}, \mathbb{Z})$ are torsion free for simplicity.  Denote  $U=\{q\in C(M_{y})| \delta \leq r(q) \leq R\}$ for constants $0<\delta\ll 1$ and $R\gg 1$, and we identify $  M_{y}=\{q\in C(M_{y})|  r(q)=1\}$.  For an $\epsilon>0$, let $\omega$ be a K\"{a}hler metric respecting to a complex structure $J$ on $U$ such that $$\|\omega -\omega_{y}\|_{C^{0}}\leq \epsilon, \  \ {\rm  and }  \  \  \|J-J_{y}\|_{C^{0}}\leq \epsilon,$$ where  $J_{y}$ is the complex structure on $Y_{y,reg}$, and let $A$ be a $U(1)$-connection on a complex line bundle $L$ with curvature  $F_{A}=-\sqrt{-1}\omega$, which implies that $\overline{\partial}_{J,A}$ is a Cauchy-Riemann operator, and  induces a holomorphic structure on $L$.  Since $\omega $ (respectively $\omega_{y}$) represents  $ c_{1}(L)$ (respectively $ c_{1}(L_{o})$), a integral class, and $\omega $ is close to $\omega_{y} $, we have $ c_{1}(L)= c_{1}(L_{o})=0$, and $L$ is isomorphic to $L_{o}$ as smooth bundles. Hence we regard $\sigma_{o}$ as a section of $L$.

  For a certain cut off function $\beta$,  we like to prove that $(\omega, J, L, A, \sigma= \beta \sigma_{o})$ satisfies the so called Property (H) in \cite{DS}, which guarantees that the result section  after applying the  H\"{o}rmander $L^{2}$-estimate to these data is a peak section.
 Property (H) reads that
   \begin{itemize}
  \item[i)]$\| \sigma\|_{L^{2}}<(2\pi)^{\frac{n}{2}}$,  \item[ii)] $|\sigma|(p)>\frac{3}{4}$ for a point $p\in U' \subset U$,
     \item[iii)] for any smooth section $\tau$ of $L$, $|\tau|(p)\leq C(\|\overline{\partial}_{J,A} \tau\|_{L^{p}(U')}+\| \tau\|_{L^{2}(U')}),$
     \item[iv)] $\|\overline{\partial}_{J,A}\sigma\|_{L^{2}}\leq \min \{\frac{1}{8C\sqrt{2}}, \frac{(2\pi)^{\frac{n}{2}}}{10\sqrt{2}}\}$
          \item[v)]  $\|\overline{\partial}_{J,A} \sigma\|_{L^{p}(U')}<\frac{1}{8C}$.
          \end{itemize}
  Property (H) can be obtained by direct calculations similar to those in Section 2 of \cite{Don}, if we further  have $\|A -A_{o}\|_{C^{0}}\ll 1$. However this condition may not be true due to the non-trivialness of the first Betti number of $U$.

Note that  $\alpha=(A -A_{o})|_{M_{y}}$ is a connection on $L\otimes L_{o}^{-1}|_{M_{y}}$ with curvature $F_{\alpha}=-\sqrt{-1}(\omega-\omega_{y})|_{M_{y}}$.   Since $\alpha$ is transformed to $\alpha+d \log u$ under a  gauge change $u:M_{y} \rightarrow M_{y}\times U(1)$,  we have $\tilde{\alpha}=\alpha_{H}+\alpha'$ by the Hodge theory  where $\alpha_{H}$ is a harmonic 1-form, and $\alpha'$ satisfies $d^{*}\alpha'=0$, $d\alpha'=F_{\alpha}$ and  $ \|\alpha'\|_{L^{2}}\leq C\|F_{\alpha}\|_{L^{2}}$.  The harmonic part $\alpha_{H}$ is a flat connection, and can be regarded as in $H^{1}(M_{y}, U(1))$, the moduli space of flat connections on $M_{y}$.  Note that $H^{1}(M_{y}, U(1))=\mathcal{H}^{1}(M_{y})/H^{1}(M_{y},\mathbb{Z}) $, and the $L^{2}$-norm on the space $\mathcal{H}^{1}(M_{y})$ of harmonic 1-forms induces a metric on $H^{1}(M_{y}, U(1))$.  In \cite{DS}, it is shown that there is  an open subset $W\in H^{1}(M_{y}, U(1))$, for example we  say  $W=\{\zeta \in \mathcal{H}^{1}(M_{y}) |\|\zeta\|_{L^{2}}<\epsilon_{W}\ll 1\}/H^{1}(M_{y},\mathbb{Z})$, such that  if $\alpha_{H}\in W$, then one can extend the gauge change $u$ to a gauge change $\tilde{u}$ on $U$ such that $\tilde{A}= A+d \log \tilde{u}$ satisfies   $\|\tilde{A} -A_{o}\|_{C^{0}}\leq C' (\epsilon_{W}+\|F_{A}-F_{A_{o}}\|_{L^{2}})$, and
 furthermore,  $(\omega, J, L, \tilde{A}, \sigma)$ satisfies the  Property (H).

Now Dirichlet's theorem asserts that there is an $m\in \mathbb{N}$ satisfying  that for any $\zeta \in H^{1}(M_{y}, U(1))$, there is a $1\leq \nu \leq m$ such that $\nu \zeta=\zeta' {\rm mod}\{ H^{1}(M_{y},\mathbb{Z})\}$ for a $\zeta'\in W$.   If the above $\alpha_{H}$ does not belong to $ W$, then we define $\tau (\nu A) \in W$ such that  $\nu \alpha_{H}=\tau (\nu A ){\rm mod}\{ H^{1}(M_{y},\mathbb{Z})\}  $  for a $\nu \leq m$.
 Let  $\tilde{U}= \{q\in C(M_{y})| m^{-\frac{1}{2}}\delta \leq r(q) \leq  R\}$, and $\mu_{\nu}: U\rightarrow \tilde{U}$ be the dilation map given by  $\mu_{\nu}(x,r)=(x, \nu^{-\frac{1}{2}}r)$ for any $(x,r)\in C(M_{y})$ where $x\in M_{y}$ and $r=r((x,r))$.

   By the convergence of $\omega_{k}$ and the definition of tangent cone, there are embeddings $\Psi_{k}: \tilde{U} \rightarrow X_{k}$ such that  $\|m_{0} \Psi_{k}^{*}\omega_{k} -\omega_{y}\|_{C^{0}}\leq \epsilon m^{-1}, $  for an  $m_{0}\in \mathbb{Z} $,    and  $  \|\Psi_{k}^{*}J_{k}-J_{y}\|_{C^{0}}\leq \epsilon m^{-1},$ where  $J_{k}$ is the complex structure of $X_{k}$.
 Let $A_{k}$ be the $U(1)$-connection on $L_{k}$ with curvature $F_{A_{k}}=-\sqrt{-1}\omega_{k}$ and compatible with the  holomorphic structure of $L_{k}$.
 By passing to a subsequence, Dirichlet's theorem shows as above that there is a  $1\leq \nu \leq m$ such that $\tau (\nu m_{0}  A_{k}) \in W$ is defined as  $\|\nu m_{0} \Psi_{k}^{*}  F_{A_{k}}-\nu F_{A_{o}}\|_{C^{0}}\leq \epsilon$. Then $\nu m_{0} A_{k}$ is gauge equivalent to a $U(1)$-connection $\tilde{A}_{k}$ of $\Psi_{k}^{-1}L_{k}^{\nu m_{0}}$ on $\tilde{U}$, and
  $$\|\tilde{A}_{k} -\nu A_{o}\|_{C^{0}}\leq C' (\epsilon_{W}+\|\nu m_{0}  \Psi_{k}^{*}  F_{A_{k}}-\nu F_{A_{o}}\|_{L^{2}})\leq C' (\epsilon_{W}+ \epsilon). $$ Since $\nu\mu_{\nu}^{*}\omega_{y}=\omega_{y}$, $ \mu_{\nu}^{*}J_{y}=J_{y}$ and  $\nu\mu_{\nu}^{*}A_{o}=A_{o}$,  one obtain that \\  $(\mu_{\nu}^{*}\Psi_{k}^{*}\nu m_{0}\omega_{k}, \mu_{\nu}^{*}\Psi_{k}^{*}J_{k}, \mu_{\nu}^{-1}\Psi_{k}^{-1}L_{k}^{\nu m_{0}}, \mu_{\nu}^{*}\tilde{A}_{k}, \sigma)$ satisfies the  Property (H).

By  identifying  $U$ with $ \Psi_{k}(\mu_{\nu}(U))$,  the  H\"{o}rmander $L^{2}$-estimate shows that there is a smooth  section $\tilde{\sigma}$ of $L_{k}^{\nu m_{0}}$ such that
  $$ \overline{\partial}_{ J_{k}, \nu m_{0} A_{k}}(\tilde{\sigma}) = \overline{\partial}_{J_{k}, \nu m_{0} A_{k}}(\sigma), \  \  {\rm with} \  \  \| \tilde{\sigma}\|_{L^{2}} \leqslant    \frac{1}{\sqrt{\nu m_{0}}}\|  \overline{\partial}_{J_{k}, \nu m_{0} A_{k}}(\sigma)\|_{L^{2}}.  $$ The peak section is  $s=\sigma-\tilde{\sigma}$.

   \subsection{Equivalence}
    Theorem \ref{RZ2} shows that under the algebro-geometric assumption of Calabi-Yau degeneration with  Calabi-Yau central fiber,  Ricci-flat  K\"{a}hler-Einstein   metrics are non-collapsed, and converge to a  compact metric space in the Gromov-Hausdorff sense. On the other hand,  Theorem \ref{D-S} says that under the differential  geometric assumption of Gromov-Hausdorff convergence, we have the limit being a Calabi-Yau variety.  It is speculated that   degenerating to  Calabi-Yau varieties should be equivalent to the  non-collapsing  convergence, which is confirmed by a recent work   \cite{Ta}.   Actually, we have a triple equivalence
    among not only the degeneration to Calabi-Yau varieties, and the  Gromov-Hausdorff  convergence, but also  the finiteness of  Weil-Petersson distance.

     Let $\pi: \mathcal{X}\rightarrow \Delta$ be a flat family of  Calabi-Yau
$n$-manifolds, and  $\mathcal{L}$ be   a relative  ample line bundle on $\mathcal{X}$.   The variation of   Hodge structures  gives a  natural   semi-positive form, a possibly  degenerated   K\"{a}hler metric,  on $\Delta$,
called the Weil-Petersson metric, $$ \omega_{WP}=-\frac{\sqrt{-1}}{2\pi}\partial\overline{\partial}\log \int_{X_{t}}(-1)^{\frac{n^{2}}{2}} \Omega_{t} \wedge \overline{\Omega}_{t}\geq0,$$ where  $\Omega_{t} $ is a relative holomorphic volume form, i.e. a no-where vanishing  section of $ \varpi_{\mathcal{X}/\Delta}$   (cf. \cite{Ti}).    The Weil-Petersson metric $ \omega_{WP}$  is the curvature of the first Hodge bundle  $\mathcal{H}^{n}=R^{n}\pi_{*}\mathbb{C}\otimes \mathcal{O}_{\Delta}$  with a natural  Hermitian metric, and describes the deformation of complex structures.

In \cite{CGH}, Candelas,  Green and H\"{u}bsch found some nodal degenerations of Calabi-Yau 3-folds with finite Weil-Petersson distance.  In general,   \cite{Wang1}  shows that  if $(\pi: \mathcal{X}\rightarrow \Delta, \mathcal{L})$ is    a degeneration of polarized  Calabi-Yau manifolds, and if the central fiber   $X_{0}$ is a Calabi-Yau variety, then the Weil-Petersson distance between $\{0\}$ and the interior $\Delta^{*}$ is finite, i.e.  $\omega_{WP}$ is not complete on $\Delta^{*}$.   Conversely,   if we assume that the Weil-Petersson distance  of $\{0\}$  is finite,  then after a finite base change   $\pi: \mathcal{X}\rightarrow \Delta$ is birational to a degeneration  $\pi': \mathcal{X}'\rightarrow \Delta$ such that $\mathcal{X}\backslash X_{0}\cong \mathcal{X}'\backslash X_{0}'$, and $X_{0}'$ is a Calabi-Yau variety by a  paper   \cite{To}.  As a consequence,  the  algebro-geometric degenerating  Calabi-Yau manifolds to a Calabi-Yau variety is equivalent  to the finiteness of the Weil-Petersson distance. In \cite{Ta},    the further equivalence to the  Gromov-Hausdorff  convergence  is also  established.
 In summary, we have the following theorem.

 \begin{theorem}[\cite{Wang1,To,Ta}]\label{Ta-To}
 Let $(\pi: \mathcal{X}\rightarrow \Delta, \mathcal{L})$ be a  degeneration of polarized Calabi-Yau manifolds.
 Then the  following statements are equivalent.
 \begin{itemize}
  \item[i)] After a finite base change,   $\pi: \mathcal{X}\rightarrow \Delta$ is birational to a degeneration  $\pi': \mathcal{X}'\rightarrow \Delta$ such that $\mathcal{X}\backslash X_{0}\cong \mathcal{X}'\backslash X_{0}'$, and $X_{0}'$ is a Calabi-Yau variety.
   \item[ii)]  $$(X_{t}, \omega_{t})
\stackrel{d_{GH}}\longrightarrow (Y, d_{Y}),  $$ when $t\rightarrow 0$, in the Gromov-Hausdorff sense, where  $\omega_{t}$ denotes the unique Ricci-flat  K\"{a}hler-Einstein   metric
in  $
c_{1}(\mathcal{L})|_{X_{t}}\in H^{1,1}(X_{t}, \mathbb{R})$, $t\in
\Delta^{*}$, and   $(Y, d_{Y}) $ is a compact metric space.
 \item[iii)]  $\{0\}$ has finite Weil-Petersson distance to the interior $\Delta^{*}$.
       \end{itemize}
\end{theorem}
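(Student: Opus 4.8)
The plan is to anchor everything at the algebro-geometric condition $(i)$ and to prove the two equivalences $(i)\Leftrightarrow(iii)$ and $(i)\Leftrightarrow(ii)$; since each shares the statement $(i)$, the three assertions become mutually equivalent. The implication $(i)\Rightarrow(ii)$ is the most direct. I would first note that a finite base change $t\mapsto t^{d}$ only reparametrizes the punctured disc, while the birational modification $\pi\rightsquigarrow\pi'$ is an isomorphism over $\Delta^{*}$; hence for $t\in\Delta^{*}$ the polarized fibers $(X_{t},\mathcal{L}|_{X_{t}})$, and therefore the Ricci-flat K\"{a}hler-Einstein metrics $\omega_{t}\in c_{1}(\mathcal{L})|_{X_{t}}$, are unchanged. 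Applying Theorem \ref{RZ2} to the modified family $\pi':\mathcal{X}'\to\Delta$, whose central fiber $X_{0}'$ is by hypothesis a Calabi-Yau variety, then gives $(X_{t},\omega_{t})\stackrel{d_{GH}}{\longrightarrow}(Y,d_{Y})$, the metric completion of $(X_{0,reg}',\omega')$, which is compact. This is exactly $(ii)$.

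For the equivalence $(i)\Leftrightarrow(iii)$ I would argue through the behaviour of $\Phi(t)=\int_{X_{t}}(-1)^{n^{2}/2}\Omega_{t}\wedge\overline{\Omega}_{t}$, whose negative logarithm is the local Weil-Petersson potential. For $(i)\Rightarrow(iii)$, on the modified family $\pi'$ the relative dualizing sheaf $\varpi_{\mathcal{X}'/\Delta}$ is trivial, so $\Omega_{t}$ may be taken as the restriction of a global nowhere-vanishing section; then $\Phi$ extends to a positive continuous function at $t=0$, the potential $\log\Phi$ is bounded near $0$, and $\omega_{WP}=-\frac{\sqrt{-1}}{2\pi}\partial\overline{\partial}\log\Phi$ has finite length along radial paths, so the Weil-Petersson distance is finite (the content of \cite{Wang1}). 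For the converse $(iii)\Rightarrow(i)$, finiteness of the distance constrains the asymptotics of $\Phi$ through Schmid's nilpotent orbit theorem and the limiting mixed Hodge structure; Tosatti's argument in \cite{To} then runs a relative minimal model program, after semistable reduction, to produce the birational filling $\pi'$ with Calabi-Yau central fiber.

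Finally, for $(ii)\Rightarrow(i)$ I would feed the Gromov-Hausdorff limit into Donaldson-Sun (Theorem \ref{D-S}): the volume $\tfrac{1}{n!}c_{1}(\mathcal{L})^{n}$ is constant and the diameter bounded as $t\to0$, so $Y$ is homeomorphic to a Calabi-Yau variety $X_{\infty}$, carrying a singular Ricci-flat K\"{a}hler-Einstein metric and realized, via an $L^{2}$-orthonormal basis of sections of a fixed power $L^{m}$, as a Hilbert-scheme limit of the $\Phi_{\Sigma_{k}}(X_{t_{k}})$. It then remains to show that this analytic limit genuinely occurs as the central fiber of an algebraic degeneration birational to $\pi$ after base change, equivalently (via $(i)\Leftrightarrow(iii)$) that $X_{\infty}$ being Calabi-Yau forces $\Phi$ to stay bounded between positive constants and hence the Weil-Petersson distance to be finite; this is carried out in \cite{Ta}. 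I expect this last step to be the \emph{main obstacle}: the Donaldson-Sun embeddings are built from transcendental orthonormal bases and a priori bear no relation to the algebraic embedding supplied by $\mathcal{L}$, so the crux is to bridge the transcendental Gromov-Hausdorff limit to the algebraic geometry of the family, comparing the two families of projective embeddings and matching the flat Hilbert-scheme limit of $X_{t}$ with an algebraic central fiber isomorphic to $X_{\infty}$.
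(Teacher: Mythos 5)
Your decomposition matches the paper's own route exactly: the paper establishes (i)$\Rightarrow$(iii) via \cite{Wang1}'s boundedness of the Weil--Petersson potential $-\log\int_{X_t}(-1)^{n^2/2}\Omega_t\wedge\overline{\Omega}_t$ on a family with Calabi--Yau central fiber, (iii)$\Rightarrow$(i) via the minimal-model-program filling of \cite{To}, (i)$\Rightarrow$(ii) by applying Theorem \ref{RZ2} to the modified family (noting the metrics on the fibers over $\Delta^*$ are unaffected by base change and birational modification), and closes the loop through \cite{Ta}, whose content you correctly identify as bridging the transcendental Donaldson--Sun limit with the algebraic family. The only point worth making explicit is that in (i)$\Rightarrow$(ii) one must also know that the modified family $\pi'$ carries a relative polarization restricting to $c_1(\mathcal{L})|_{X_t}$ over $\Delta^*$ before Theorem \ref{RZ2} applies, which is part of what the construction in \cite{To} delivers.
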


An application of such equivalence is the completion of  the moduli space of polarized Calabi-Yau manifolds, which is surveyed in the next section.

\section{Moduli space of polarized Calabi-Yau manifolds}
Let $\mathcal{M}^{P}$ be the moduli space of polarized Calabi-Yau manifolds $(X,L)$ of dimension $n$ with a fixed Hilbert polynomial $P=P(\mu)=\chi (X,L^{\mu})$, i.e.  $$ \mathcal{M}^{P}=\big\{(X,L)|P(\mu)=\chi (X,L^{\mu}) \big\}/ \sim,$$ where $(X_{1},L_{1})\sim(X_{2},L_{2})$ if and only if  there is an isomorphism $\psi: X_{1}  \rightarrow X_{2}$ such that $L_{1}=\psi^{*}L_{2}$.  We denote the equivalent class $[X,L]\in  \mathcal{M}^{P}$ represented by $(X,L)$.

The  Bogomolov-Tian-Todorov's  unobstructedness  theorem  of Calabi-Yau manifolds implies that $\mathcal{M}^{P}$ is a complex orbifold (cf. \cite{Ti,Tod}).  The variation of   Hodge structures  gives a natural orbifold  K\"{a}hler metric on $\mathcal{M}^{P}$,
called the Weil-Petersson metric,   which  is the curvature of the first Hodge bundle    with a natural  Hermitian metric  (cf. \cite{Ti}).
From the algebro-geometric viewpoint,    Viehweg proved   in \cite{Vie}  that
$\mathcal{M}^{P}$ is a  quasi-projective variety, and coarsely represents the moduli functor  $\mathfrak{M}^{P}$ for polarized Calabi-Yau manifolds with   Hilbert polynomial $P$.

We like   to understand $\mathcal{M}^{P}$ by  considering  Ricci-flat K\"{a}hler-Einstein metrics.  The Calabi-Yau theorem gives  a continuous  map$$\label{map0} \mathcal{CY}: \mathcal{M}^{P}  \rightarrow \mathcal{M}et,  \  \  {\rm by}  \  \  \left[X,L\right]  \mapsto  (X, \omega),$$
 where $\omega$ is the unique Ricci-flat K\"ahler-Einstein metric representing $c_{1}(L)$. However, $ \mathcal{CY}$ is not injective in general  since $\mathcal{M}^{P}$ contains the information of complex structures.

  The  compactifications of   moduli spaces were   studied in various cases, for example, the Mumford's compactification of moduli spaces for curves (cf.  \cite{M-GIT}),  the Satake compactification of moduli spaces for Abelian varieties  (cf. \cite{Sat}), and more  recently the compact moduli spaces  for general type  stable varieties of higher dimension  (cf. \cite{Kol1}).   Because of the importance of Calabi-Yau manifolds in mathematics and physics, it is  also  desirable to have  compactifications of $\mathcal{M}^{P}$.

 For constructing compactifications of $\mathcal{M}^{P}$, Yau suggested   that one uses the Weil-Petersson metric to obtain a metric  completion of   $\mathcal{M}^{P}$   first, and then tries  to compactify this completion (cf. \cite{Lu}). In \cite{Wang2}, an alternative approach   is proposed by using the Gromov-Hausdorff distance,   instead of the Weil-Petersson metric,   to construct a completion.  In a recent paper \cite{zha1}, we  constructed  a completion  of  $\mathcal{M}^{P}$ via Ricci-flat K\"{a}hler-Einstein metrics and Gromov-Hausdorff topology, which can be viewed as a  partial compactification.

  Let  $\overline{\mathcal{CY}( \mathcal{M}^{P} )}$ be the  closure of the image  $\mathcal{CY}( \mathcal{M}^{P} ) $ in $\mathcal{M}et$.  There is a natural metric space structure on $\overline{\mathcal{CY}( \mathcal{M}^{P} )}$ by restricting the Gromov-Hausdorff distance.  Since $(\mathcal{M}et, d_{GH})$ is complete, $\overline{\mathcal{CY}( \mathcal{M}^{P} )}$ is the completion in the Gromov-Hausdorff sense.  However, we do not expect that $\overline{\mathcal{CY}( \mathcal{M}^{P} )}$ have some  algebro-geometric properties because of the non-injectivity  of $\mathcal{CY}$.  We obtain an enlarge moduli space $\overline{\mathcal{M}}^{P}$  such that $\mathcal{CY}$  extends to surjection from $\overline{\mathcal{M}}^{P}$ to $\overline{\mathcal{CY}( \mathcal{M}^{P} )}$ in \cite{zha1}.

 \begin{theorem}\label{main} There is a Hausdorff topological space $\overline{\mathcal{M}}^{P}$, and    a surjection  $$\overline{\mathcal{CY}}:\overline{\mathcal{M}}^{P} \rightarrow   \overline{\mathcal{CY}( \mathcal{M}^{P} )}$$    satisfying  the follows.    \begin{itemize}
  \item[i)]   $\mathcal{M}^{P}$ is  an open dense subset of  $ \overline{\mathcal{M}}^{P}$,   and $\overline{\mathcal{CY}}|_{\mathcal{M}^{P}}=\mathcal{CY}$.
   \item[ii)]  For any $p\in  \overline{\mathcal{M}}^{P}$,   $\overline{\mathcal{CY}}(p)$ is homeomorphic to a Calabi-Yau variety.
     \item[iii)]  There is an exhaustion  $$ \mathcal{M}^{P}\subset  \mathcal{M}_{m(1)} \subset  \mathcal{M}_{m(2)}  \subset  \cdots \subset \mathcal{M}_{m(l)}   \subset \cdots \subset \overline{\mathcal{M}}^{P}=\bigcup_{l\in\mathbb{N}}  \mathcal{M}_{m(l)},$$ where $m(l)\in\mathbb{N}$ for any $l\in\mathbb{N}$,  such that   $\mathcal{M}_{m(l)} $ is a  quasi-projective variety, and there is an ample line bundle $\lambda_{m(l)}$ on   $\mathcal{M}_{m(l)} $.
           \item[iv)]  Let  $(\pi: \mathcal{X}\rightarrow \Delta, \mathcal{L})$ be a degeneration of polarized  Calabi-Yau manifolds  with   a Calabi-Yau variety $X_{0}$ as the central fiber.  Assume that for any $t\in\Delta^{*}$, there is an ample line bundle $L_{t}$ on $X_{t}$ such that $L_{t}^{k}\cong \mathcal{L}|_{X_{t}}$ for a  $k\in\mathbb{N}$, and $[X_{t},L_{t}]\in \mathcal{M}^{P}$.
      Then there is    a unique morphism $\rho: \Delta \rightarrow \mathcal{M}_{m(l)} $,  for  $l\gg 1$,  such that  $\overline{\mathcal{CY}}(\rho(t))$ is homeomorphic to  $ X_{t}$ for any $t\in\Delta$, and $$ \overline{\mathcal{CY}}(\rho(t)) \rightarrow \overline{\mathcal{CY}}(\rho(0)),$$ when $t\rightarrow 0$, in the Gromov-Hausdorff sense.
      Furthermore,  $\rho^{*}\lambda_{m(l)}=\pi_{*} \varpi_{\mathcal{X}/\Delta}^{\nu(l)}$ for a  $\nu(l)\in\mathbb{N}$.
\end{itemize}
\end{theorem}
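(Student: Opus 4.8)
The plan is to realize $\overline{\mathcal{M}}^{P}$ as an increasing union of quotients of loci in Hilbert schemes, with the map $\overline{\mathcal{CY}}$ recorded through the transcendental embeddings of Section 2.2 and the Donaldson--Sun theorem. First I would fix an increasing sequence of diameter bounds $D_{l}\nearrow\infty$ and restrict attention to the subset of $\mathcal{M}^{P}$ consisting of those $[X,L]$ whose Ricci-flat metric $\omega\in c_{1}(L)$ satisfies ${\rm diam}_{\omega}(X)\le D_{l}$; the volume is pinned by $\tfrac{1}{n!}c_{1}(L)^{n}$, so Bishop--Gromov gives a two-sided volume bound on this class. Applying Theorem \ref{D-S} (and the finiteness discussion following it) produces uniform integers $m=m(l)$ and $\bar{N}=\bar{N}(l)$ so that every such $(X,L)$, \emph{and} every Gromov--Hausdorff limit arising from a sequence in the class, embeds in a common $\mathbb{CP}^{\bar{N}}$ via an $L^{2}$-orthonormal basis of $H^{0}(X,L^{m})$, with image in finitely many components of $\mathcal{H}il_{\bar{N}}^{P}$. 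Each such image is well defined up to the $SU(\bar{N}+1)$-orbit $RO(X,L^{m})$ of \eqref{eor}, which depends only on $\omega$.

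Next I would let $Z_{l}\subset(\mathcal{H}il_{\bar{N}(l)}^{P})_{red}$ be the $SL(\bar{N}(l)+1)$-invariant closure (in the analytic topology) of the set of these metric embeddings, and define $\mathcal{M}_{m(l)}$ as the quotient of $Z_{l}$ by $SL(\bar{N}(l)+1)$. By Theorem \ref{D-S}(i)--(ii) every point of $Z_{l}$ is an embedded Calabi-Yau variety, so the closed orbits are exactly the $RO$-orbits carrying a singular Ricci-flat K\"ahler-Einstein metric from Theorem \ref{EGZ}; the Kempf--Ness picture then identifies these with the $SU(\bar{N}+1)$-orbits and makes the quotient a separated, hence Hausdorff, quasi-projective variety. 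The ample line bundle $\lambda_{m(l)}$ is the descent of the natural $SL$-linearized polarization, which over the moduli functor is a power of the Hodge line bundle (the determinant of the pushforward of the dualizing sheaf), trivialized on fibers by the normalized holomorphic volume form. To assemble the exhaustion I would check that raising $D_{l}$ to $D_{l+1}$ only enlarges the class and that re-embedding $L^{m(l)}$ by the Veronese $L^{m(l+1)}$ induces a compatible inclusion $\mathcal{M}_{m(l)}\hookrightarrow\mathcal{M}_{m(l+1)}$, so that $\overline{\mathcal{M}}^{P}:=\bigcup_{l}\mathcal{M}_{m(l)}$ is Hausdorff, contains $\mathcal{M}^{P}$ as an open dense subset, and satisfies $\overline{\mathcal{CY}}|_{\mathcal{M}^{P}}=\mathcal{CY}$; this yields (i), (iii), and—via the metric completion of Theorem \ref{EGZ} matched to the Gromov--Hausdorff limit by Theorem \ref{D-S}(iii)—statement (ii).

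For part (iv) I would feed a degeneration $(\pi:\mathcal{X}\to\Delta,\mathcal{L})$ with Calabi-Yau central fiber into Theorem \ref{RZ1} and Theorem \ref{RZ2}: Proposition \ref{dia} bounds the diameters uniformly, so all fibers and their limit sit inside a single $\mathcal{M}_{m(l)}$, and the embeddings $\Phi_{\Sigma_{t}}(X_{t})$ converge in $\mathcal{H}il_{\bar{N}(l)}^{P}$ to an embedding of $X_{0}$ by Theorem \ref{D-S}. This defines a continuous map $\Delta\to\mathcal{M}_{m(l)}$; to upgrade it to a morphism $\rho$ I would note that on $\Delta^{*}$ the algebraic family $\mathcal{X}|_{\Delta^{*}}$ classifies a morphism into $\mathcal{M}^{P}\subset\mathcal{M}_{m(l)}$, and then extend across $0$ using separatedness of $\mathcal{M}_{m(l)}$ together with the uniqueness of the filling-in (Corollary \ref{coro}) to identify the limit with $X_{0}$. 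The identity $\rho^{*}\lambda_{m(l)}=\pi_{*}\varpi_{\mathcal{X}/\Delta}^{\nu(l)}$ then follows from the construction of $\lambda_{m(l)}$ as the Hodge line bundle and the functoriality of the determinant of the pushforward of the dualizing sheaf under base change.

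The main obstacle I anticipate is precisely the transcendental nature of the Donaldson--Sun embeddings flagged after Corollary \ref{coro}: the point of $\mathcal{H}il_{\bar{N}}^{P}$ attached to $(X,L)$ comes from an \emph{$L^{2}$-orthonormal} basis for the Ricci-flat metric, which is not algebraic in the family parameter, whereas the quasi-projective structure on $\mathcal{M}_{m(l)}$ and the morphism $\rho$ in (iv) must be algebraic. Reconciling these—showing that the metric orbit map descends to the \emph{algebraic} quotient and that the continuous map from $\Delta$ coincides with the algebraic classifying morphism—is where the real work lies; concretely it amounts to proving that the closed $SL$-orbits are exactly the polystable Calabi-Yau points and invoking Kempf--Ness and Luna-slice separatedness, so that the $SU$-orbit recorded by the metric and the algebraic orbit agree. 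A secondary difficulty is the compatibility of the quotients across different $m(l)$, which requires controlling how (poly)stability behaves under the Veronese re-embedding.
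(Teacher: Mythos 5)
Your overall architecture --- exhaust $\mathcal{M}^{P}$ by diameter bounds, embed everything (and every Gromov--Hausdorff limit) into a common $\mathbb{CP}^{\bar N}$ via Theorem \ref{D-S}, record each metric by its $SU(\bar N+1)$-orbit $RO(X,L^{m})$ from (\ref{eor}), and quotient --- is exactly the paper's. The treatment of (iv) via Theorem \ref{RZ1}, Theorem \ref{RZ2}, Proposition \ref{dia} and Corollary \ref{coro} is also in line with the paper. The genuine gap is in how you propose to put a quasi-projective structure and an ample $\lambda_{m(l)}$ on $\mathcal{M}_{m(l)}$. You want to take the $SL(\bar N+1)$-invariant closure $Z_{l}$ of the metric embeddings, assert that ``the closed orbits are exactly the $RO$-orbits carrying a singular Ricci-flat K\"ahler-Einstein metric,'' and invoke Kempf--Ness. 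This requires knowing that the Calabi-Yau points of the Hilbert scheme are GIT-(poly)stable for the $SL(\bar N+1)$-action with respect to some linearization, and that $\mathcal{R}_{N}$ is the actual zero set of the corresponding moment map. Neither is available: asymptotic Chow/Hilbert stability of (singular) Calabi-Yau varieties is not known, and the Donaldson--Sun embeddings by $L^{2}$-orthonormal bases of the singular Ricci-flat metric are not balanced embeddings, so $\mathcal{R}_{N}$ is only an \emph{analogue} of a moment-map zero set (as the paper itself is careful to say). Declaring this ``where the real work lies'' does not close the gap, because the work in that direction is an open problem rather than a routine verification.

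The paper avoids GIT entirely. It first defines $\mathcal{H}_{N}$ algebro-geometrically (the Calabi-Yau points between $\mathcal{H}_{N}^{o}$ and its Zariski closure), gives it an open subscheme structure using Kawamata's deformation theory of canonical singularities, establishes separatedness of the moduli problem from the extension results of Boucksom and Odaka (\cite[Theorem 2.1]{Bo}, \cite[Corollary 4.3]{Od}), and then invokes Viehweg's construction (Section 8 of \cite{Vie}), which produces a geometric quotient $\mathcal{H}_{N}\to\mathcal{M}_{m}'$ with an ample line bundle $\lambda_{m}$ via weak positivity of $\pi_{*}\varpi_{\mathcal{X}/\Delta}^{\nu}$ --- this is also where the identity $\rho^{*}\lambda_{m(l)}=\pi_{*}\varpi_{\mathcal{X}/\Delta}^{\nu(l)}$ in (iv) comes from, not from a descent of an $SL$-linearized $\mathcal{O}(1)$. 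The metric side only enters through the elementary observation that uniqueness of the K\"ahler-Einstein metric forces $RO(X_{p},\cdot)=O(X_{p},\cdot)\cap\mathcal{R}_{N}$, so that $\mathcal{R}_{N}/SU(N+1)=\mathcal{H}_{N}/SL(N+1)$ as topological spaces; Hausdorffness is then immediate from compactness of $SU(N+1)$ acting on the Hausdorff space $\mathcal{R}_{N}$, with no appeal to separatedness of a GIT quotient. You should replace the Kempf--Ness step by this Viehweg-based argument; the rest of your outline then goes through.
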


When $n=2$, a Calabi-Yau variety is a K3 orbifold, and a degeneration of K3 surfaces to a K3 orbifold is called a degeneration of type I.  It is well-known that one can fill the holes in the moduli space of K\"{a}hler  polarized K3 surfaces by some K\"{a}hler  K3 orbifolds, and  obtain  a complete moduli space
     (cf. \cite{Kob1,KT}).
     The relationship between such moduli space and the degeneration of Ricci-flat K\"{a}hler-Einstein metrics  is also established    in \cite{Kob1,KT}.

We recall the construction of  $\overline{\mathcal{M}}^{P}$.
For any $D>0$, we define a subset $\mathcal{M}^{P}(D)$  of  $\mathcal{M}^{P}$ by  $$\mathcal{M}^{P}(D)=\{ \left[X,L\right] \in \mathcal{M}^{P}| \  {\rm   Ricci-flat \  metric} \   \omega \in c_{1}(L)  \ {\rm   with   \  diam}_{\omega}(X)\leq D \}. $$
 We have that if $D_{1}\leq D_{2}$, then $\mathcal{M}^{P}(D_{1})\subset \mathcal{M}^{P}(D_{2})$, and $\mathcal{M}^{P}=\bigcup\limits_{D>0}\mathcal{M}^{P}(D) $.
Note that for  a sequence  $\left[X_{k}, L_{k}\right]\in \mathcal{M}^{P}(D)$, if $(X_{k}, \omega_{k})$   converges to a compact metric space $Y$ in the Gromov-Hausdorff sense, then by Theorem \ref{D-S}, there are embeddings   $\Phi_{k}: X_{k} \hookrightarrow \mathbb{CP}^{N}$ for an  $N>0$ independent of $k$ such that  $L_{k}^{m}\cong \Phi_{k}^{*} \mathcal{O}_{\mathbb{CP}^{N}}(1)$ for an $m>0$, and
    $\Phi_{k}(X_{k})$ converges to
  a Calabi-Yau variety $X_{\infty}$ in the Hilbert scheme  $\mathcal{H}ilb^{P_{m}}_{N}$, which is  homeomorphic to $Y$.  By Matsusaka's Big Theorem (cf. \cite{Mat}), we take $m$ large enough such that    for any $[X,L]\in\mathcal{M}^{P} $,
      $L^{m}$ is very ample, and $H^{i}(X,L^{m})=\{0\}$, $i>0$.   Thus  we have an embedding $\Phi_{\Sigma}: X
 \hookrightarrow \mathbb{CP}^{N}$, and $\Phi_{\Sigma}(X)\in \mathcal{H}il_{N}^{P_{m}}$.

 Let $\pi_{\mathcal{H}}: \mathcal{U}_{N}\rightarrow \mathcal{H}ilb^{P_{m}}_{N}$ be the universal family over the Hilbert scheme $ \mathcal{H}ilb^{P_{m}}_{N}$ of the  Hilbert polynomial $P_{m}(\mu)=P(m\mu)$, and $\mathcal{H}_{N}^{o}\subset (\mathcal{H}ilb^{P_{m}}_{N})_{red}$ be the Zariski  open  subset    parameterizing    smooth varieties.  Let
    $\mathcal{H}_{N}$ be the subset of $ (\mathcal{H}ilb^{P_{m}}_{N})_{red}$ such that  $\mathcal{H}_{N}^{o}  \subset \mathcal{H}_{N} \subset \overline{ \mathcal{H}_{N}^{o}}$  where $\overline{ \mathcal{H}_{N}^{o}}$ denotes
      the Zariski closure  of   $\mathcal{H}_{N}^{o}$ in $ (\mathcal{H}ilb^{P_{m}}_{N})_{red}$, and  a  point   $p\in \mathcal{H}_{N}$ if   $X_{p}=\pi_{\mathcal{H}}^{-1}(p)$ is a Calabi-Yau variety.

Recall that for any Calabi-Yau variety $X_{p}$ where $p\in\mathcal{H}_{N}$,  the Ricci-flat
 K\"{a}hler-Einstein  metric $\omega\in c_{1}(\mathcal{O}_{\mathbb{CP}^{N}}(1)|_{X_{p}})$ induces  an  $SU(N+1)$-orbit $ RO(X_{p},\mathcal{O}_{\mathbb{CP}^{N}}(1)|_{X_{p}})\subset  \mathcal{H}ilb^{P_{m}}_{N}$  by (\ref{eor}).
 If we let  $$ \mathcal{R}_{N}= \bigcup_{p\in\mathcal{H}_{N}} RO(X_{p},\mathcal{O}_{\mathbb{CP}^{N}}(1)|_{X_{p}})\subset \mathcal{H}_{N},$$ then there is a natural $SU(N+1)$-action on $ \mathcal{R}_{N}$, which is induced by the $SL(N+1)$-action on $ \mathcal{H}ilb^{P_{m}}_{N}$. Let $$\mathcal{M}_{m}=  \mathcal{R}_{N}/ SU(N+1).$$  Note that the reduced Hilbert scheme $ (\mathcal{H}ilb^{P_{m}}_{N})_{red}$ is Hausdorff under the analytic topology, and so is   the subset  $\mathcal{R}_{N}$. Thus the quotient $\mathcal{M}_{m}$ by the   compact Lie group  $ SU(N+1)$ is also Hausdorff.
  It is clear that both $\mathcal{M}^{P}(D)$ and $\mathcal{M}^{P}$ are subsets of $\mathcal{M}_{m}$, and furthermore,  if $X$ is a Calabi-Yau variety homeomorphic to a compact metric space in $\overline{\mathcal{CY}(\mathcal{M}^{P}(D))} $, then  $X\in \mathcal{M}_{m}$.  We obtain the enlarged moduli space $\overline{\mathcal{M}}^{P}=\cup \mathcal{M}_{m}$ by takeing a increasing sequence of $D_{j}$.

However, it is unclear whether $\mathcal{M}_{m}$ has any algebro-geometric property  from the above construction. We need the full strength  of \cite{Vie} to prove that   $\mathcal{M}_{m}$ is a quasi-projective variety.  Firstly, we show that $\mathcal{H}_{N}$ can be given  an open subscheme structure in Section 3.2 of \cite{zha1} by the deformation of varieties with canonical singularities \cite{Ka}.  Secondly,   by \cite[Theorem 2.1]{Bo} and  \cite[Corollary 4.3]{Od},  if   $( \mathcal{X}_{1}\rightarrow S, \mathcal{L}_{1})$ and $( \mathcal{X}_{2}\rightarrow S, \mathcal{L}_{2})$ are     two flat families of polarized  Calabi-Yau varieties  over a germ of smooth curve  $(S,0)$, then any isomorphism of these two families over $S\backslash\{0\}$ extends to an isomorphism over $S$, which implies the   separateness condition of  $\mathcal{M}_{m}$.  Finally, the construction of Section 8 in \cite{Vie} shows that there is  a geometric quotient $ \mathcal{H}_{N} \rightarrow \mathcal{M}_{m}'$ of the natural  $SL(N+1)$-action on $\mathcal{H}_{N}$,  and an ample line bundle $\lambda_{m}$ on  $\mathcal{M}_{m}' $, which implies that $\mathcal{M}_{m}'$ is a quasi-projective variety.
     If  $O(X_{p},\mathcal{O}_{\mathbb{CP}^{N}}(1)|_{X_{p}})$ is the $SL(N+1)$-orbit  of the $SL(N+1)$-action for a $p\in\mathcal{H}_{N}$, then
     the uniqueness of the K\"ahler-Einstien metric  $\omega$ implies that
 $$ RO(X_{p},\mathcal{O}_{\mathbb{CP}^{N}}(1)|_{X_{p}})= O(X_{p},\mathcal{O}_{\mathbb{CP}^{N}}(1)|_{X_{p}})\bigcap \mathcal{R}_{N}. $$ Thus  $$  \mathcal{M}_{m}=  \mathcal{R}_{N}/ SU(N+1)= \mathcal{H}_{N}/ SL(N+1)$$ with the quotient topology induced by the analytic topology of $\mathcal{H}_{N}$, and   is homeomorphic to the underlying variety of $ \mathcal{M}_{m}'$. We obtain the conclusion.

     K\"ahler-Einstein metrics were previously   used to construct compactifications of moduli spaces  for K\"ahler-Einstein orbifolds in \cite{Ma}, and it is proved that such compactification  coincides with the standard Mumford's  compactification in the case of  curves.   As a matter of fact, the idea of the above construction is from \cite{Ma}.  The real slice $ \mathcal{R}_{N}$ induced by K\"ahler-Einstein metrics is an analogue of the zero set of momentum map in the Geometric Invariant Theory (cf. \cite{M-GIT,Th}).

     In \cite{zha1}, we also proved  that the  points  in $\overline{\mathcal{M}}^{P}\backslash   \mathcal{M}^{P}$ have finite Weil-Petersson distances, which is a  corollary of Theorem
\ref{Ta-To}.

       \begin{theorem}\label{main2} Let   $\overline{\mathcal{M}}^{P}$ and   $\overline{\mathcal{CY}}$  be the same as in Theorem \ref{main}.
          \begin{itemize}
  \item[i)]   For any point  $x\in  \overline{\mathcal{M}}^{P}\backslash   \mathcal{M}^{P}$,  there is a curve $\gamma$ such that $\gamma(0)=x$,  $\gamma( (0,1] )\subset  \mathcal{M}^{P}$ and the length of $\gamma$  under the Weil-Petersson metric  $\omega_{WP}$ is finite, i.e.   $${\rm length}_{\omega_{WP}}(\gamma) < \infty .$$
    \item[ii)]  Let  $(\pi: \mathcal{X}\rightarrow \Delta, \mathcal{L})$ be a degeneration of polarized  Calabi-Yau manifolds   such that  for any $t\in\Delta^{*}$, $L_{t}^{k}\cong \mathcal{L}|_{X_{t}}$ for a  $k\in\mathbb{N}$, and $[X_{t},L_{t}]\in \mathcal{M}^{P}$, where $L_{t}$ is an ample line bundle.   If the Weil-Petersson distance between  $0\in \Delta$ and the interior  $\Delta^{*}$ is finite,
      then there is    a unique morphism $\varrho: \Delta \rightarrow \mathcal{M}_{m(l)} $,  for  $l\gg 1$,  such that $\mathcal{CY}(\varrho(t))$ is homeomorphic to  $X_{t}$,  $t\in\Delta^{*}$.
   \end{itemize}
\end{theorem}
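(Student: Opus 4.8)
The plan is to derive both statements from the triple equivalence of Theorem~\ref{Ta-To} together with the structural properties of $\overline{\mathcal{M}}^{P}$ recorded in Theorem~\ref{main}; the Weil--Petersson geometry itself will never be estimated directly, only transported through these equivalences.

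For part (i), first I would use that $x$ lies in some quasi-projective stratum $\mathcal{M}_{m(l)}$ in which $\mathcal{M}^{P}$ is open and dense (Theorem~\ref{main}(i),(iii)), so that the boundary $\overline{\mathcal{M}}^{P}\setminus\mathcal{M}^{P}$ is a proper Zariski-closed subset. Choosing a general algebraic curve through $x$, normalizing it and restricting to a small disc, I obtain a holomorphic arc $\gamma:\Delta\to\mathcal{M}_{m(l)}$ with $\gamma(0)=x$ and $\gamma(\Delta^{*})\subset\mathcal{M}^{P}$. Lifting this arc to the Hilbert-scheme stratum $\mathcal{H}_{N}$ (using that $\mathcal{M}_{m}=\mathcal{H}_{N}/SL(N+1)$ is a geometric quotient) and pulling back the universal family $\mathcal{U}_{N}\to\mathcal{H}_{N}$ together with $\mathcal{O}(1)$ produces a degeneration of polarized Calabi--Yau manifolds whose central fibre is exactly the Calabi--Yau variety corresponding to $x$. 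Since the central fibre is a Calabi--Yau variety, the implication (i)$\Rightarrow$(iii) of Theorem~\ref{Ta-To} shows that $0$ has finite Weil--Petersson distance to $\Delta^{*}$; as the pullback of $\omega_{WP}$ under $\gamma$ is the Weil--Petersson form of this family, extracting a finite-length path realizing that distance and reparametrizing it over $(0,1]$ yields the desired curve with $\mathrm{length}_{\omega_{WP}}(\gamma)<\infty$.

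For part (ii), the finiteness hypothesis lets me invoke the implication (iii)$\Rightarrow$(i) of Theorem~\ref{Ta-To}: after a finite base change $b:\Delta\to\Delta$, $s\mapsto s^{d}$, the pulled-back family is birational to a degeneration $\pi':\mathcal{X}'\to\Delta$ with $\mathcal{X}'\setminus X_{0}'$ isomorphic to the interior of the base-changed family and with $X_{0}'$ a Calabi--Yau variety. In particular $X'_{s}\cong X_{s^{d}}$ for $s\in\Delta^{*}$ and the interior polarizations carry over, so Theorem~\ref{main}(iv) applies to $\pi'$ and furnishes a morphism $\rho:\Delta\to\mathcal{M}_{m(l)}$ for $l\gg1$ with $\overline{\mathcal{CY}}(\rho(s))$ homeomorphic to $X'_{s}$. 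On $\Delta^{*}$ this reads $\rho(s)=[X_{s^{d}},L_{s^{d}}]$, which is manifestly invariant under the deck group $\mu_{d}$ of $b$; hence $\rho$ descends to a morphism $\varrho:\Delta=\Delta/\mu_{d}\to\mathcal{M}_{m(l)}$ with $\varrho(t)=[X_{t},L_{t}]$ for $t\in\Delta^{*}$, so $\mathcal{CY}(\varrho(t))$ is homeomorphic to $X_{t}$ there. Uniqueness follows from separatedness of $\mathcal{M}_{m(l)}$ (established in its construction as a quasi-projective variety), since a morphism from $\Delta$ to a separated variety is determined by its restriction to the dense subset $\Delta^{*}$.

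The main obstacle I expect lies in the bookkeeping that makes Theorem~\ref{Ta-To} and Theorem~\ref{main}(iv) fit together in part (ii): Theorem~\ref{main}(iv) demands a degeneration with Calabi--Yau central fibre and a relative ample polarization, whereas the finiteness hypothesis only supplies such a model after a finite base change and a birational modification that alters the total space over $0$. The delicate points are therefore (a) producing a relative ample bundle $\mathcal{L}'$ on $\mathcal{X}'$ compatible with the given interior polarizations, and (b) verifying the $\mu_{d}$-equivariant descent of $\rho$ across the central fibre, so that $\varrho$ is a genuine morphism rather than merely a continuous extension. For part (i) the analogous but milder difficulty is ensuring that the general arc through $x$ can be lifted and realized as an honest flat polarized degeneration with Calabi--Yau central fibre, which is precisely what makes Theorem~\ref{Ta-To} applicable.
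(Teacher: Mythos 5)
Your proposal is correct and follows essentially the same route as the paper, which presents Theorem \ref{main2} precisely as a corollary of the triple equivalence in Theorem \ref{Ta-To} combined with the construction of $\overline{\mathcal{M}}^{P}$ and property (iv) of Theorem \ref{main}, deferring the technical details (lifting arcs through the Hilbert-scheme strata, compatibility of polarizations after base change and birational modification, and descent/separatedness) to \cite{zha1}. The delicate points you flag are exactly the ones the detailed reference has to address, and your handling of them is consistent with that treatment.
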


 A simple concrete example for Theorem \ref{main} and Theorem \ref{main2} is the mirror Calabi-Yau threefold of the quintic threefold constructed in \cite{COGP} (cf. Section 18 in  \cite{GHJ}), i.e.   $X_{t}$ is the crepant resolution of the quotient  $$Y_{s}= \{\left[z_{0}, \cdots, z_{4}\right]\in \mathbb{CP}^{4} | z_{0}^{5}+ \cdots + z_{4}^{5}+s z_{0} \cdots z_{4}=0\}/(\mathbb{Z}_{5}^{5}/\mathbb{Z}_{5})$$ of the quintic by $\mathbb{Z}_{5}^{5}/\mathbb{Z}_{5}$,  where $s^{5}= t\in \mathbb{C}$.    By choosing a polarization, $ \mathcal{M}^{P}= \mathbb{C}\backslash \{ 1\}$, and $0$ is an orbifold point of $ \mathcal{M}^{P}$.   When $t=1$,    $X_{1}$  is a Calabi-Yau variety with only one   ordinary double points, and    however, $t=\infty$ is a large complex limit point, which implies that  $t=\infty$ is the cusp end of  $ \mathcal{M}^{P}$   and   has infinite Weil-Petersson distance.
 Thus  $\overline{\mathcal{M}}^{P}=\mathbb{C}$.

\section{Surgeries}

Extremal transitions and flops are algebro-geometric surgeries providing
ways to connect two topologically distinct projective manifolds,
which are interesting in both mathematics and physics. In the minimal
model program, all smooth minimal models  in a birational
equivalence class are connected by  sequences  of flops. The famous Reid's fantasy conjectures that all Calabi-Yau
threefolds are connected to each other by extremal transitions, so as to form a huge connected
web.  The goal of this section is to study the behaviour of Ricci-flat K\"ahler-Einstien metrics under these surgeries, extremal transitions and flops.

\subsection{Degeneration of K\"{a}hler classes}
In this subsection, we study the limit behaviour of Ricci-flat K\"{a}hler-Einstein metrics when their K\"{a}hler classes approach to  the boundary of the K\"{a}hler cone on a fixed Calabi-Yau manifold, which is parallel to the  case of degenerations.

As in the degeneration case, we have  a prior estimate for the diameter of   Ricci-flat K\"{a}hler-Einstein metric   obtained in \cite{To1} and \cite{zha3} (See also Theorem 3.1 in  \cite{RuZ}).  More precisely, if $X$ is a Calabi-Yau $n$-manifold, and $\omega_{0}$ is a Ricci-flat K\"{a}hler-Einstein metric on $X$, then for any Ricci-flat K\"{a}hler-Einstein metric $\omega$ on $X$, we have $$ {\rm diam}_{\omega}(X)\leq 32n+ D(\int_{X}\omega \wedge \omega_{0}^{n-1})^{n},$$ where $D>0$ is a constant   depending on $\omega_{0}$, but independent   of $\omega$.

Let $X$ be a Calabi-Yau variety, and $\bar{\pi}:\bar{X}\rightarrow X$ be a crepant resolution, i.e. a resolution such that $\bar{\pi}^{*}\varpi_{X}=\varpi_{\bar{X}}$, which implies that $\bar{X}$ is a Calabi-Yau manifold. If $L$ is an ample line bundle on $X$, then $\bar{\pi}^{*}L$ is semi-ample and big, but no longer ample, i.e. $\bar{\pi}^{*}c_{1}(L)\in \partial\overline{\mathbb{K}_{\bar{X}}}=\overline{\mathbb{K}_{\bar{X}}}\backslash \mathbb{K}_{\bar{X}}$.
  The following convergence theorem is proved
  in \cite{To}.

 \begin{theorem}[\cite{To}]\label{To1}
 \label{To}   Let $X$ be a Calabi-Yau variety, and $\bar{\pi}:\bar{X}\rightarrow X$ be a crepant resolution, and  $L$ be an ample line bundle on $X$.
 For any family of K\"{a}hler classes
 $\alpha_{s}\in \mathbb{K}_{\bar{X}}$, $s\in (0,1]$, with $\lim\limits_{s\rightarrow 0}\alpha_{s}=\bar{\pi}^{*}c_{1}(L)$, if  $\bar{\omega}_{s}\in \alpha_{s} $ is the  Ricci-flat K\"{a}hler-Einstein metric, then
  $\bar{\omega}_{s}$ converges smoothly  to
 $\bar{\pi}^{*}\omega$ on any compact subset of $
 \bar{\pi}^{-1}(X_{reg})$, when $s\rightarrow 0$,   where   $ \omega$ is the unique singular  Ricci-flat  K\"{a}hler-Einstein metric
 representing $c_{1}(L)$ on $X$.
 \end{theorem}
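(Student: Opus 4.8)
The plan is to reduce the theorem to uniform a priori estimates for a family of complex Monge-Amp\`ere equations on $\bar X$ and then to pin down the limit by the uniqueness part of Theorem \ref{EGZ}. Fix a smooth K\"ahler metric $\omega_{X}\in c_{1}(L)$ on $X$ and set $\omega_{0}=\bar\pi^{*}\omega_{X}$, a smooth semipositive big form on $\bar X$ representing $\bar\pi^{*}c_{1}(L)$ that restricts to a genuine K\"ahler metric on $\bar\pi^{-1}(X_{reg})$ and degenerates only along the exceptional locus over $X_{sing}$. Since $\bar\pi$ is crepant, $\bar\pi^{*}\Omega$ is a nowhere-vanishing holomorphic volume form on $\bar X$, so $(-1)^{n^{2}/2}\bar\pi^{*}(\Omega\wedge\ov{\Omega})$ is a fixed smooth positive volume form on all of $\bar X$. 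Writing the Ricci-flat metric relative to this fixed reference as $\bar\omega_{s}=\omega_{0}+\theta_{s}+\ddbar\varphi_{s}$, where $\theta_{s}$ is a smooth closed form representing $\alpha_{s}-\bar\pi^{*}c_{1}(L)$ with $\theta_{s}\to 0$ smoothly and $\sup_{\bar X}\varphi_{s}=0$, the condition $\Ric(\bar\omega_{s})=0$ becomes
\[
(\omega_{0}+\theta_{s}+\ddbar\varphi_{s})^{n}=c_{s}\,(-1)^{\frac{n^{2}}{2}}\bar\pi^{*}(\Omega\wedge\ov{\Omega}),\qquad \sup_{\bar X}\varphi_{s}=0,
\]
a degenerate complex Monge-Amp\`ere equation with semipositive big reference $\omega_{0}$, where $c_{s}>0$ is fixed by matching volumes. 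Because $L$ is ample on $X$, the projection formula gives $\alpha_{s}^{n}\to(\bar\pi^{*}c_{1}(L))^{n}=c_{1}(L)^{n}>0$, so the total volumes stay bounded and bounded away from zero and $c_{s}\to c_{0}>0$.

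First I would establish a uniform zeroth-order estimate $\|\varphi_{s}\|_{C^{0}(\bar X)}\le C$ with $C$ independent of $s$. The right-hand side above is a fixed smooth volume form, hence uniformly bounded in $L^{p}$ for every $p$ with respect to a fixed K\"ahler metric on $\bar X$; together with the uniform two-sided volume bounds this is precisely the input for the pluripotential $L^{\infty}$ estimate of Ko{\l}odziej to produce a bound that is stable as the classes $\alpha_{s}$ degenerate. To feed this machine I would first record a uniform diameter bound: the a priori estimate quoted at the start of this subsection, taken with respect to a fixed Ricci-flat metric $\omega_{\sharp}$ on $\bar X$, gives $\mathrm{diam}_{\bar\omega_{s}}(\bar X)\le 32n+D(\int_{\bar X}\bar\omega_{s}\wedge\omega_{\sharp}^{n-1})^{n}$, and $\int_{\bar X}\bar\omega_{s}\wedge\omega_{\sharp}^{n-1}=\alpha_{s}\cdot[\omega_{\sharp}]^{n-1}$ is cohomological, hence uniformly bounded, so the Sobolev and Poincar\'e constants entering the argument are controlled.

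Next I would localize the higher-order estimates to compact subsets $K\subset\bar\pi^{-1}(X_{reg})$, where $\bar\pi$ is a biholomorphism onto its image and $\omega_{0}$ has geometry bounded on compacts. On such a $K$ the Chern-Lu inequality applied to $\tr{\bar\omega_{s}}{\omega_{0}}$, using $\Ric(\bar\omega_{s})=0$ and the bounded bisectional curvature of $\omega_{0}$, together with the $C^{0}$ bound and a cutoff function localizing the maximum principle, yields a uniform lower bound $\bar\omega_{s}\ge C_{K}^{-1}\omega_{0}$; the Monge-Amp\`ere equation, whose right-hand side is a fixed smooth positive volume form, then forces the matching upper bound $\bar\omega_{s}\le C_{K}\omega_{0}$. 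With the metrics uniformly equivalent to $\omega_{0}$ on compacts the equation is uniformly elliptic, so the Evans-Krylov theory gives uniform $C^{2,\alpha}_{loc}$ bounds and the Schauder estimates give uniform $C^{k}_{loc}$ bounds for every $k$, all on $\bar\pi^{-1}(X_{reg})$.

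Finally I would pass to the limit. By Arzel\`a-Ascoli a subsequence of $\varphi_{s}$ converges in $C^{\infty}_{loc}(\bar\pi^{-1}(X_{reg}))$, and transporting through the biholomorphism $\bar\pi$ the limiting metric solves the Ricci-flat Monge-Amp\`ere equation on $X_{reg}$ in the class $c_{1}(L)$ with bounded potential. By the uniqueness assertion of Theorem \ref{EGZ} this limit equals the singular Ricci-flat K\"ahler-Einstein metric $\omega$, so $\bar\omega_{s}\to\bar\pi^{*}\omega$ smoothly on compacts of $\bar\pi^{-1}(X_{reg})$, and uniqueness of the limit removes the need to pass to a subsequence, giving convergence of the full family. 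The main obstacle is the uniform $C^{0}$ estimate: since $\alpha_{s}$ approaches the boundary of the K\"ahler cone, the reference degenerates and the classical constants in Yau's estimate blow up, so one genuinely needs the volume-based pluripotential estimate (equivalently the techniques underlying Theorem \ref{EGZ}) to obtain a bound uniform under the degeneration. The local second-order estimate is comparatively routine away from the exceptional locus, but its constants $C_{K}$ necessarily deteriorate as $K$ exhausts $\bar\pi^{-1}(X_{reg})$, which is exactly why smooth convergence is asserted only on compact subsets there.
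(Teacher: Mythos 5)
Your overall architecture coincides with the one the paper relies on: this theorem is quoted from \cite{To1} without proof, but the argument the survey sketches for the analogous degeneration result, Theorem \ref{RZ1} in Section 3.1, follows exactly your template (reduce to a family of Monge--Amp\`ere equations with degenerating reference class, uniform $C^{0}$ bound, localized higher-order estimates away from the degeneracy locus, identification of the limit via the uniqueness in Theorem \ref{EGZ}). Two implementation points differ and are worth flagging. For the $C^{0}$ bound the survey and \cite{To1} use Yau's Moser iteration with the roles of $\bar\omega_{s}$ and the reference metric reversed, the a priori diameter bound serving to control the Sobolev constant; your alternative of invoking Ko{\l}odziej's pluripotential estimate is equally valid and in fact needs no Sobolev input, so on that route the diameter discussion is superfluous rather than a prerequisite. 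More substantively, your localized Chern--Lu argument is fragile as written: at an interior maximum of $\eta\bigl(\log\tr{\bar\omega_{s}}{\omega_{0}}-A\varphi_{s}\bigr)$ the error terms $\Delta_{\bar\omega_{s}}\eta$ and $|\nabla\eta|^{2}_{\bar\omega_{s}}$ are measured in the unknown metric and are not a priori controlled, so the cutoff does not close without further work. The standard device, used in \cite{To1,RoZ1} and implicit in Section 3.1, is a \emph{global} maximum principle on all of $\bar X$ applied to $\log\tr{\bar\omega_{s}}{\omega_{0}'}-A\varphi_{s}$, where $\omega_{0}'$ is the pullback of the Fubini--Study metric under $\bar\pi$ composed with a projective embedding of $X$: the Chern--Lu inequality for this holomorphic map requires only ${\rm Ric}(\bar\omega_{s})=0$ and the bounded bisectional curvature of the ambient Fubini--Study metric, remains valid across the exceptional locus where $\omega_{0}'$ degenerates, and yields the global bound $\omega_{0}\le C\bar\omega_{s}$, after which your local upper bound, Evans--Krylov, Schauder, and the uniqueness argument go through unchanged.
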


Note that in the  above theorem,    the diameter of  $\bar{\omega}_{s}$ is uniformly bounded, i.e. $${\rm diam}_{\bar{\omega}_{s}}(\bar{X})\leq D,$$ for constant $D>0$ independent of $s$.
  For any sequence $s_{k}\rightarrow 0$,
  the Gromov pre-compactness theorem
(Theorem \ref{G1}) says that    a subsequence of $(\bar{X}, \bar{\omega}_{s_{k}})$ converges to a compact metric space $(Y,d_{Y})$ in the Gromov-Hausdorff sense, and   Theorem \ref{CC} shows that    there is a closed subset $S\subset Y$ with
   Hausdorff dimension $\dim_{\mathcal{H}}S\leq 2n-4 $ such that $Y\backslash S
   $ is a Ricci-flat  K\"{a}hler $n$-manifold.   In \cite{RoZ1}, it is proved that $Y$ is the metric completion of $(X_{reg}, \omega)$.
    An analogue  of Theorem \ref{D-S} is obtained for the  case of $X$ being  a conifold in \cite{Song1}, and later  is  generalized   to   any Calabi-Yau variety  in \cite{Song2}. Furthermore,   the analogue for more general K\"{a}hler metrics is also studied  in a recent  preprint \cite{LZT},

\begin{theorem}[\cite{Song2}]\label{song} Let $X$, $\bar{X}$ and $\bar{\omega}_{s}$ be the same as in Theorem \ref{To}.
If $Y$ is the Gromov-Hausdorff  limit a sequence $(\bar{X}, \bar{\omega}_{s_{k}})$, then
   $Y$  is homeomorphic to $X$.
\end{theorem}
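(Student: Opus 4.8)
The plan is to adapt the partial $C^0$-estimate of Donaldson and Sun (the engine behind Theorem \ref{D-S}) to the present situation, where the complex structure of $\bar{X}$ stays fixed while the K\"ahler classes $\alpha_s$ degenerate to $\bar{\pi}^*c_1(L)$. The decisive simplification is that the limit is already understood at the level of sets: by \cite{RoZ1} the Gromov-Hausdorff limit $(Y,d_Y)$ is the metric completion of $(X_{reg},\omega)$, where $\omega$ is the singular Ricci-flat K\"ahler-Einstein metric on $X$ in $c_1(L)$. Since $X=X_{reg}\cup X_{sing}$ is another compactification of the same regular locus, the theorem amounts to showing that the metric completion adds exactly the points of $X_{sing}$, with a neighborhood of each completion point homeomorphic to the corresponding algebraic neighborhood in $X$. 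Non-collapsing holds because $\mathrm{Vol}_{\bar{\omega}_s}(\bar{X})=\tfrac{1}{n!}\alpha_s^n\to\tfrac{1}{n!}c_1(L)^n>0$, so Theorem \ref{CC} applies and the smooth convergence $\bar{\omega}_s\to\bar{\pi}^*\omega$ of Theorem \ref{To} identifies $X_{reg}$ with a dense open subset of the regular locus $Y\setminus S$.

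First I would fix the polarization. As $\bar{\pi}$ is a crepant resolution of a normal variety one has $\bar{\pi}_*\mathcal{O}_{\bar{X}}=\mathcal{O}_X$, hence $H^0(\bar{X},(\bar{\pi}^*L)^m)\cong H^0(X,L^m)$: the sections of the semi-ample big line bundle $\bar{\pi}^*L$ are precisely the pullbacks of sections of $L^m$ on $X$. Although $\bar{\omega}_s$ lies in $\alpha_s$ rather than in $\bar{\pi}^*c_1(L)$, the discrepancy $\alpha_s-\bar{\pi}^*c_1(L)\to 0$ is a vanishing class, so I would endow $\bar{\pi}^*L$ with a Hermitian metric whose curvature differs from $\bar{\omega}_s$ by a form of arbitrarily small $C^0$-norm, small enough to be absorbed into the error terms of Property (H). With this data I would run the Donaldson-Sun construction on $(\bar{X},\bar{\omega}_{s})$: at each $y\in Y$ pass to the tangent cone $C(M_y)$, build the local model $(L_o,A_o,\sigma_o=e^{-r^2/4})$, adjust the connection by a Dirichlet-type gauge change to kill the harmonic part of $(A-A_o)|_{M_y}$ (after replacing the bundle by a power $\nu\le m$), verify Property (H), and apply the H\"ormander $L^2$-estimate to obtain uniformly many peak sections of $(\bar{\pi}^*L)^m$ for a fixed $m$.

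These peak sections separate points and tangent directions on $Y$ and so realize $Y$ as a normal projective variety embedded in a fixed $\mathbb{CP}^{\bar{N}}$, exactly as in Theorem \ref{D-S}. The identification with $X$ is then forced by the bookkeeping of the previous step: the peak sections are elements of $H^0(\bar{X},(\bar{\pi}^*L)^m)\cong H^0(X,L^m)$, their $L^2$-norms are taken against the limiting metric $\omega$ on $X$, and the limiting embedding of $Y$ therefore coincides with the embedding $\Phi_{\Sigma}:X\hookrightarrow\mathbb{CP}^{\bar{N}}$ of $X$ by an $\omega$-orthonormal basis of $H^0(X,L^m)$. The resulting continuous map $Y\to X$ restricts to the identity on the dense subset $X_{reg}$ and is a bijection, hence a homeomorphism of compact Hausdorff spaces, giving $Y\cong X$.

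The hard part will be the peak-section construction along the singular set $S$, just as in \cite{DS}. The tangent cones $C(M_y)$ can have singular links $M_y$ with nontrivial first cohomology, so the harmonic component $\alpha_H$ of $(A-A_o)|_{M_y}$ need not be small and must be pushed into an open subset $W\subset H^1(M_y,U(1))$ only reachable after passing to a bounded power $\nu$ via Dirichlet's theorem; carrying this through \emph{uniformly in} $s$, so that the embeddings $\Phi_{\Sigma_{s}}(\bar{X})$ converge inside one Hilbert scheme to a reduced normal limit, is the crux. One must also show that near each $p\in X_{sing}$ the link collapses to a single completion point rather than producing extra components or higher-dimensional strata, i.e. that the homeomorphism type near $p$ is the algebraic one; this is precisely the content established for conifolds in \cite{Song1} and in general in \cite{Song2}.
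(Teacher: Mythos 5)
The paper itself gives no proof of this statement; it is quoted from \cite{Song1,Song2}, so your sketch has to stand on its own. Your opening reduction is correct and matches the strategy of the cited works: non-collapsing follows from $\mathrm{Vol}_{\bar{\omega}_s}(\bar{X})=\frac{1}{n!}\alpha_s^n\to\frac{1}{n!}c_1(L)^n>0$ together with the uniform diameter bound, and by \cite{RoZ1} the Gromov--Hausdorff limit is the metric completion of $(X_{reg},\omega)$, so the theorem reduces to identifying that completion with the variety $X$.

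The genuine gap is the step where you ``run the Donaldson--Sun construction on $(\bar{X},\bar{\omega}_{s})$''. That machinery requires a Hermitian line bundle whose curvature equals $-\sqrt{-1}$ times a fixed integer multiple of the K\"ahler form, and here there is none: the classes $\alpha_s$ are arbitrary K\"ahler classes converging to $\bar{\pi}^*c_1(L)$ and need not be rational, and your proposed substitute --- a Hermitian metric on $\bar{\pi}^*L$ whose curvature is $C^0$-close to $\bar{\omega}_s$ --- cannot be arranged. Any smooth representative $\theta$ of $\bar{\pi}^*c_1(L)$ is degenerate along the fibers of $\bar{\pi}$ over $X_{sing}$, whereas $\bar{\omega}_s$ restricts to a genuine metric there, so $\|\bar{\omega}_s-\theta\|_{C^0(\bar{\omega}_s)}$ is of order $1$ on the exceptional set for every $s$; Property (H) only absorbs errors that are small after the gauge adjustment, and the exceptional set is precisely where the new analysis is needed. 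This is why \cite{Song1,Song2} do not polarize the approximating family but instead establish a partial $C^0$-estimate for the singular metric $\omega$ on $(X,L^m)$ directly (local volume non-collapsing near $X_{sing}$, uniform $L^\infty$ and gradient bounds for $L^2$-orthonormal sections, and the H\"ormander estimate transplanted to a resolution with auxiliary metrics). Relatedly, your closing ``bijection, hence homeomorphism'' presupposes exactly the two hard points --- that the limiting sections separate the completion points lying over $X_{sing}$, and that the limit in the Hilbert scheme is the reduced normal variety $\Phi_{\Sigma}(X)$ rather than some other compactification of $\Phi_{\Sigma}(X_{reg})$ --- and you defer both to \cite{Song1,Song2}, which makes the argument circular as a proof of the theorem.
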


As in the case of degeneration, we also like to know more explicit  asymptotical behaviour when metrics are approaching to  the singularities.    If $X$ is a K3 orbifold, Kobayashi's theorem in Section 2.1 gives a very satisfactory     description.  However it is not clear in the higher dimensional case.

  We assume that  the Calabi-Yau variety $X$ in Theorem \ref{To} and Theorem \ref{song}   has dimension 3,  and  only ordinary double points as singularities, i.e. for any $p\in X_{sing}$, there is a neighborhood $U_{p}\subset X$  isomorphic to an open subset of  the quadric  $\mathcal{Q}=\{(w_{1}, \cdots, w_{4})\in \mathbb{C}^{4} |w_{1}w_{2}=w_{3}w_{4}\}$.  The crepant resolution $\bar{X}$ is locally given by the small resolution of  the quadric cone  $$\bar{\mathcal{Q}}=\{(w_{1}, \cdots, w_{4},[y_{0},y_{1}])\in \mathbb{C}^{4}\times \mathbb{CP}^{1} |w_{1}y_{1}=w_{3}y_{0},  y_{0}w_{2}=y_{1}w_{4}\}\rightarrow  \mathcal{Q} ,$$ where $ \bar{\mathcal{Q}}$ is biholomorphic to the total space of the bundle $\mathcal{O}_{\mathbb{CP}^{1}}(1)\oplus\mathcal{O}_{\mathbb{CP}^{1}}(1)$.  For any $s>0$,
    a complete Ricci-flat K\"ahler-Einstein metric $\bar{\omega}_{co, s}$ is construced in \cite{Ca} by
 \begin{equation}\label{eco-r}\bar{\omega}_{co, s}=\sqrt{-1}\partial\overline{\partial }(\bar{f}_{s}(\rho)+4s^{2} \log (|y_{0}|^{2}+|y_{1}|^{2})),  \end{equation} where $\rho=|w_{1}|^{2}+ \cdots + | w_{4}|^{2}$, and $\bar{f}_{s}(\rho)$ satisfies the following ordinary differential equation $$(\rho \bar{f}_{s}'(\rho))^{3}+6s^{2} (\rho \bar{f}_{s}'(\rho))^{2}- \rho^{2}=0, $$
 which can be solved explicitly.  When $s \rightarrow 0$, $\bar{\omega}_{co, s}$ converges smoothly to the Ricci-flat cone metric $\omega_{co}$ on $\mathcal{Q}$ given by  (\ref{eco}).  Like the case of degeneration,  we expect that those metrics $\bar{\omega}_{s}$ on $\bar{X}$ are asymptotic to $\bar{\omega}_{co, s}$ near singularities when $s \rightarrow 0$.

 Now we consider the flops, which connect distinct Calabi-Yau manifolds in the same birational class.
If $X$ is a Calabi-Yau variety  admitting  two different crepant
 resolutions $ (\bar{X}_{1}, \bar{\pi}_{1})$ and
 $(\bar{X}_{2}, \bar{\pi}_{2})$ with both exceptional subvarieties of codimension  at least  2, the process of
going from $\bar{X}_{1}$ to  $\bar{X}_{2}$ is called  a flop,
denoted by $$\bar{X}_{1} \rightarrow X \dashrightarrow
\bar{X}_{2}.$$   The simplest case is the local  flop for the quadric $\mathcal{Q}$ of dimension 3. Note that $\mathcal{Q}$ has two  crepant resolutions, one is $ \bar{\mathcal{Q}}$ as above and the other is $$\bar{\mathcal{Q}}'=\{(w_{1}, \cdots, w_{4},[y_{0},y_{1}])\in \mathbb{C}^{4}\times \mathbb{CP}^{1} |w_{1}y_{1}=w_{4}y_{0},  y_{0}w_{2}=y_{1}w_{3}\}\rightarrow  \mathcal{Q} ,$$
i.e.   we have a flop $$\bar{\mathcal{Q}}\rightarrow \mathcal{Q}  \dashrightarrow
\bar{\mathcal{Q}}'.$$  Flops were used by Tian and Yau in \cite{TY-flop} for constructing topological distinct Calabi-Yau threefolds.

 The following theorem is proved in \cite{RoZ1}, which asserts that flops are continuous in the Gromov-Hausdorff sense when one considers Ricci-flat K\"{a}hler-Einstein metrics.

 \begin{theorem}[\cite{RoZ1}]\label{t1.01+} Let  $X$ be an $n$-dimensional Calabi-Yau
variety,  and $L$ be an ample
line bundle on $X$.  Assume that $X$ admits two crepant resolutions
$(\bar{X}_{1}, \bar{\pi}_{1})$ and $(\bar{X}_{2}, \bar{\pi}_{2})$.
 Let
 $\bar{\omega}_{1,s}$ (resp. $\bar{\omega}_{2,s}$),  $s\in (0, 1] $,  be a family of Ricci-flat
  K\"{a}hler metrics on $\bar{X}_{1} $ (resp. $\bar{X}_{2} $) with  K\"{a}hler classes
  $\lim\limits_{s\rightarrow 0}[\bar{\omega}_{\alpha,s}]=\bar{\pi}_{\alpha}^{*} c_{1}(L)
  $, $\alpha=1,2$.
  Then there exists a compact metric space $(Y, d_{Y})$ such
  that $$( \bar{X}_{1}, \bar{\omega}_{1,s}) \stackrel{d_{GH}}\longrightarrow (Y,
d_{Y}) \stackrel{d_{GH}}\longleftarrow ( \bar{X}_{2},
\bar{\omega}_{2,s}),   \ \ \ \  when \ \  s\rightarrow 0.
$$
 Furthermore, $(Y, d_{Y})$ is isometric to the metric completion
   $(X_{reg},\omega) $ where $\omega \in c_{1}(L)$ is the singular  Ricci-flat
  K\"{a}hler-Einstein  metric on $X$
\end{theorem}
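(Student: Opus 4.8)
The plan is to treat the two families separately, reducing each to the single convergence result of Theorem~\ref{To1}, and then to observe that the two limits must coincide because the limiting object is intrinsic to $X$. The point is that each crepant resolution $\bar{\pi}_{\alpha}$ restricts to a biholomorphism over the regular locus: the exceptional loci lie over $X_{sing}$, so $\bar{\pi}_{\alpha}:\bar{\pi}_{\alpha}^{-1}(X_{reg})\to X_{reg}$ is an isomorphism, $\alpha=1,2$. Moreover the singular Ricci-flat K\"ahler-Einstein metric $\omega\in c_{1}(L)$ furnished by Theorem~\ref{EGZ} depends only on $X$ and $c_{1}(L)$, not on any resolution. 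Hence both families, when transported to $X_{reg}$ through the respective resolution maps, converge to the \emph{same} geometric datum $(X_{reg},\omega)$, and it suffices to show that each Gromov-Hausdorff limit is the metric completion of $(X_{reg},\omega)$.

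First I would fix $\alpha$ and apply Theorem~\ref{To1} to the family $\bar{\omega}_{\alpha,s}$, obtaining smooth convergence of $\bar{\omega}_{\alpha,s}$ to $\bar{\pi}_{\alpha}^{*}\omega$ on every compact subset of $\bar{\pi}_{\alpha}^{-1}(X_{reg})$ as $s\to 0$. The diameter bound recalled after Theorem~\ref{To1} gives $\mathrm{diam}_{\bar{\omega}_{\alpha,s}}(\bar{X}_{\alpha})\leq D$ uniformly in $s$, so the Gromov pre-compactness theorem (Theorem~\ref{G1}) applies: along any sequence $s_{k}\to 0$ a subsequence of $(\bar{X}_{\alpha},\bar{\omega}_{\alpha,s_{k}})$ converges in the Gromov-Hausdorff sense to a compact metric space $(Y_{\alpha},d_{Y_{\alpha}})$. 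Because $\mathrm{Vol}_{\bar{\omega}_{\alpha,s}}(\bar{X}_{\alpha})=\frac{1}{n!}[\bar{\omega}_{\alpha,s}]^{n}$ converges to the positive number $\frac{1}{n!}c_{1}(L)^{n}>0$ as $s\to 0$, the family is uniformly non-collapsed, and Theorem~\ref{CC} produces a closed singular set $S_{\alpha}\subset Y_{\alpha}$ with $\dim_{\mathcal{H}}S_{\alpha}\leq 2n-4$ such that $Y_{\alpha}\setminus S_{\alpha}$ carries a smooth Ricci-flat K\"ahler-Einstein metric, with the convergence being smooth in the Cheeger-Gromov sense on the regular part.

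The central step is to identify each limit $Y_{\alpha}$ with the metric completion of $(X_{reg},\omega)$. Combining the smooth convergence on $\bar{\pi}_{\alpha}^{-1}(X_{reg})$ from Theorem~\ref{To1} with the Cheeger-Gromov convergence on $Y_{\alpha}\setminus S_{\alpha}$, I would build a local isometric embedding $\iota_{\alpha}:(X_{reg},\omega)\hookrightarrow(Y_{\alpha}\setminus S_{\alpha},d_{Y_{\alpha}})$ and show that the complement $Y_{\alpha}\setminus\iota_{\alpha}(X_{reg})$ has Hausdorff codimension at least $2$. Then the almost geodesic convexity of the regular locus from Cheeger-Colding \cite{CC2} forces every pair of points of $\iota_{\alpha}(X_{reg})$ to be joined by curves inside $\iota_{\alpha}(X_{reg})$ of length arbitrarily close to their $d_{Y_{\alpha}}$-distance, so that $d_{Y_{\alpha}}$ restricted to $\iota_{\alpha}(X_{reg})$ is exactly the intrinsic distance of $(X_{reg},\omega)$. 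Consequently $(Y_{\alpha},d_{Y_{\alpha}})$ is the metric completion of $(X_{reg},\omega)$; this is precisely the argument carried out in \cite{RoZ1} (compare the proof of Theorem~\ref{RZ2}).

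Finally, since the metric completion of $(X_{reg},\omega)$ is a single compact metric space that depends neither on $\alpha$ nor on the chosen subsequence $s_{k}$, all subsequential limits are isometric to one fixed space $(Y,d_{Y})$; hence the full families converge, and
$$( \bar{X}_{1}, \bar{\omega}_{1,s}) \stackrel{d_{GH}}\longrightarrow (Y, d_{Y}) \stackrel{d_{GH}}\longleftarrow ( \bar{X}_{2}, \bar{\omega}_{2,s})$$
as $s\to 0$, with $(Y,d_{Y})$ isometric to the completion of $(X_{reg},\omega)$. I expect the main obstacle to be the codimension-$\geq 2$ estimate together with the almost geodesic convexity: these are exactly what guarantee that the Gromov-Hausdorff limit distance is realized by paths remaining in the regular locus and that no spurious lower-dimensional pieces are attached in the limit. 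They are what make the limit genuinely the completion of $(X_{reg},\omega)$, and therefore blind to the difference between the two resolutions.
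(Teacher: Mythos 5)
Your proposal is correct and follows essentially the same route as the paper: apply Theorem~\ref{To1} to each resolution separately, use the uniform diameter bound and non-collapsing to invoke Gromov precompactness and Cheeger--Colding regularity, identify each subsequential limit with the metric completion of $(X_{reg},\omega)$ via the local isometric embedding, the codimension-$\geq 2$ estimate and almost geodesic convexity, and conclude by the resolution-independence of that completion. This is exactly the argument of \cite{RoZ1} that the survey sketches.
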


In the minimal
   model program, it is proved  that  for any two Calabi-Yau
   $n$-manifolds $X$ and $X'$  birational to each other, there are sequences  of
   flops connecting  $X$ and $X'$ in  \cite{Kawamata}, which  was obtained previously  in  \cite{Kol} for the case of dimension 3,  i.e., there is  a sequence of
varieties $ X_{1}, \cdots, X_{k}$ such that $X=X_{1}$, $X'=X_{k}$,
and $X_{j+1}$ is obtained by a flop from $X_{j}$. Consequently there
are normal  projective varieties $ X_{0,1}, \cdots, X_{0,k-1}$, and
small resolutions $\bar{\pi}_{j}:X_{j}\rightarrow X_{0,j}$ and
$\bar{\pi}_{j}^{+}:X_{j}\rightarrow X_{0,j-1}$.  Hence $$X_{1}\rightarrow X_{0,1} \dashrightarrow X_{2}\rightarrow  X_{0,2} \dashrightarrow \cdots \rightarrow X_{0,k-1} \dashrightarrow X_{k}.  $$  Now we restrict  to the case of dimension $3$, i.e. $\dim_{\mathbb{C}}X=\dim_{\mathbb{C}}X'=3$.
 By  \cite{Kol},
$X_{j}$ has  the same singularities as $X$, and thus $X_{j}$ is
smooth.  Since the exceptional locus of $\bar{\pi}_{j}$ and
$\bar{\pi}_{j}^{+}$ are of co-dimension at least   2,  $X_{0,j}$ has
only canonical singularities, and  the canonical bundle  of
$X_{0,j}$ is  trivial (cf. Corollary 1.5 in  \cite{Kaw}).  Therefore
$X_{0,j}$ is a three-dimensional Calabi-Yau variety, and $X_{j}$ is a
three-dimensional Calabi-Yau
manifold.

 For a fixed  Calabi-Yau manifold $X$, if we denote $$\mathcal{CY}(X,\mathbb{K}_{X})=\{(X, \omega)|{\rm all \ of} \ \omega \ {\rm with} \   {\rm  Ric}(\omega)\equiv0,   \ {\rm Vol}_{\omega}(X)=1 \}\subset (\mathcal{M}et, d_{GH}),$$ then
by  Theorem \ref{t1.01+}, for any $j>0$, there is a compact metric space $(Y,d_{Y})$ belonging to both of $\overline{\mathcal{CY}(X_{j},\mathbb{K}_{X_{j}})}$ and $
\overline{\mathcal{CY}(X_{j+1},\mathbb{K}_{X_{j+1}})}$, where
$\overline{\mathcal{CY}(X_{j},\mathbb{K}_{X_{j}})}$ denotes the closure of
$\mathcal{CY}(X_{j},\mathbb{K}_{X_{j}})\subset (\mathcal{M}et, d_{GH})$,   and furthermore $Y$ is homeomorphic to $X_{0,j}$ by Theorem \ref{song}.
Thus
$$\overline{\mathcal{CY}(X_{j},\mathbb{K}_{X_{j}})}\bigcup
\overline{\mathcal{CY}(X_{j+1},\mathbb{K}_{X_{j+1}})}$$ is path connected.  We obtain the following corollary.

    \begin{corollary}[\cite{RoZ1}]\label{c1.2}  If $X$ is a three-dimensional Calabi-Yau manifold, then
   $$\bigcup_{
  all \ such  \ X' }\{\overline{\mathcal{CY}(X',\mathbb{K}_{X'})}|\ a \  Calabi-Yau \ manifold \ X' \  birational \ to \ X\}
 $$ is path connected in $(\mathcal{M}et, d_{GH})$.
 \end{corollary}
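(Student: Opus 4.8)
The plan is to reduce the statement to a finite chain of flops supplied by the minimal model program and then to cross each flop using Theorem \ref{t1.01+}. First I would fix a basepoint $(X,\omega_{0})\in\mathcal{CY}(X,\mathbb{K}_{X})$ and show that every point of the union can be joined to it by a path. Given a Calabi--Yau threefold $X'$ birational to $X$, by \cite{Kawamata} (and by \cite{Kol} in dimension three) there is a finite sequence $X=X_{1},X_{2},\dots,X_{k}=X'$ of three-dimensional Calabi--Yau manifolds together with three-dimensional Calabi--Yau varieties $X_{0,1},\dots,X_{0,k-1}$ such that, for each $j$, both $X_{j}$ and $X_{j+1}$ are crepant (small) resolutions of $X_{0,j}$. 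Thus it suffices to move freely inside each $\mathcal{CY}(X_{j},\mathbb{K}_{X_{j}})$ and, for each $j$, to connect an interior point of $\mathcal{CY}(X_{j},\mathbb{K}_{X_{j}})$ to an interior point of $\mathcal{CY}(X_{j+1},\mathbb{K}_{X_{j+1}})$.

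Next I would establish that each $\mathcal{CY}(X_{j},\mathbb{K}_{X_{j}})$ is path connected. The normalized slice $\{\alpha\in\mathbb{K}_{X_{j}}:\alpha^{n}=n!\}$ is path connected: given two such classes, join them by the straight segment inside the convex cone $\mathbb{K}_{X_{j}}$, and rescale each intermediate class by the unique positive scalar restoring $\alpha^{n}=n!$, which depends continuously on the parameter. By the Calabi--Yau theorem (Theorem \ref{calabi-yau}) and the continuity of the Ricci-flat metric in its K\"ahler class, the map $\alpha\mapsto(X_{j},\omega_{\alpha})$ is continuous into $(\mathcal{M}et,d_{GH})$, so $\mathcal{CY}(X_{j},\mathbb{K}_{X_{j}})$ is path connected as the continuous image of a path-connected set.

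To cross the $j$-th flop I would invoke Theorem \ref{t1.01+} for $X_{0,j}$ with its two crepant resolutions $X_{j}$ and $X_{j+1}$, together with an ample line bundle $L$ on $X_{0,j}$. This yields a compact metric space $(Y_{j},d_{Y_{j}})$, the metric completion of $(X_{0,j,reg},\omega)$, and families of Ricci-flat metrics $\bar{\omega}_{j,s}$ on $X_{j}$ and $\bar{\omega}_{j+1,s}$ on $X_{j+1}$ whose K\"ahler classes tend to $\bar{\pi}^{*}c_{1}(L)$ and which both converge to $Y_{j}$ as $s\to0$. After the consistent volume rescaling of the previous paragraph, legitimate because $(\bar{\pi}^{*}c_{1}(L))^{n}=c_{1}(L)^{n}>0$ so that no collapsing occurs, the maps $s\mapsto(X_{j},\bar{\omega}_{j,s})$ and $s\mapsto(X_{j+1},\bar{\omega}_{j+1,s})$, extended by the common value $Y_{j}$ at $s=0$, are continuous paths in $\overline{\mathcal{CY}(X_{j},\mathbb{K}_{X_{j}})}$ and in $\overline{\mathcal{CY}(X_{j+1},\mathbb{K}_{X_{j+1}})}$ respectively. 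Joining their $s=1$ endpoints through the path-connected loci of the previous step, passing to $Y_{j}$ along one family and back out along the other, and chaining over $j=1,\dots,k-1$, produces a single continuous path joining the basepoint to any interior point of $\mathcal{CY}(X',\mathbb{K}_{X'})$. Theorem \ref{song} identifies each crossing point $Y_{j}$ as homeomorphic to $X_{0,j}$, which pins down the geometry although it is not logically needed for connectivity.

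The hard part will be to upgrade path connectedness from the interior loci $\mathcal{CY}(X_{j},\mathbb{K}_{X_{j}})$ to their full closures, i.e. to join an arbitrary boundary point of $\overline{\mathcal{CY}(X_{j},\mathbb{K}_{X_{j}})}$ to the interior by a path, since the closure of a path-connected set is in general only connected and need not be path connected. The way around this is that every such boundary point is a Gromov--Hausdorff limit along a ray of K\"ahler classes approaching $\partial\overline{\mathbb{K}_{X_{j}}}$, and the continuity of the Calabi--Yau map up to these boundary directions is exactly the kind of convergence furnished by Theorem \ref{t1.01+} and its analogues. Verifying that this continuity holds along the rays relevant to the flop chain, so that the assembled path remains inside the union, is the delicate point; once it is in hand, the flop crossings and the combinatorial concatenation are routine.
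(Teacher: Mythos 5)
Your proposal follows the paper's argument essentially verbatim: the decomposition of the birational map into a finite chain of flops via \cite{Kawamata,Kol}, the verification (resting on Corollary 1.5 of \cite{Kaw} and the fact from \cite{Kol} that the intermediate $X_{j}$ stay smooth in dimension three) that each $X_{0,j}$ is a Calabi--Yau $3$-variety so that Theorem \ref{t1.01+} applies, the resulting common Gromov--Hausdorff limit $(Y_{j},d_{Y_{j}})$ lying in both $\overline{\mathcal{CY}(X_{j},\mathbb{K}_{X_{j}})}$ and $\overline{\mathcal{CY}(X_{j+1},\mathbb{K}_{X_{j+1}})}$, and the convexity of the normalized slice of the K\"ahler cone together with continuity of the Calabi--Yau map to move freely inside each $\mathcal{CY}(X_{j},\mathbb{K}_{X_{j}})$. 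Up to and including the flop crossings, this is exactly the paper's route and your argument is complete.

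The step you defer --- joining an arbitrary point of $\overline{\mathcal{CY}(X_{j},\mathbb{K}_{X_{j}})}$ to the interior locus --- is indeed where the remaining content lies, and you are right that it does not follow formally from path-connectedness of the interior. Be aware, however, that the survey's own sketch elides it in the same way: it exhibits the common point $Y$ and then asserts that the union of the two closures is path connected, which tacitly presupposes each closure is itself path connected. So relative to the proof as presented here you are not missing anything the paper supplies; but as a self-contained argument your proposal is incomplete at the point you identify, and the fix you suggest is not automatic. A point of the closure is a priori only a limit of a \emph{sequence} $(X_{j},\omega_{\alpha_{k}})$ whose classes $\alpha_{k}$ need not converge in the normalized slice of $\mathbb{K}_{X_{j}}$, and even when they do converge to a nef and big class $\alpha_{\infty}$ on the boundary, that class need not be of the form $\bar{\pi}^{*}c_{1}(L)$ for a crepant morphism onto a Calabi--Yau variety, which is the only situation Theorems \ref{To1} and \ref{song} cover. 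Closing this gap requires input beyond what is quoted in the survey (it is addressed in \cite{RoZ1} itself), so you should either supply that argument or restrict the claim to the union of the interior loci together with the specific limit points $Y_{j}$ produced by the flop chain.
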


 We expect that this corollary still holds for higher dimensional Calabi-Yau manifolds. One of the difficulties to obtain a  proof is the  lack of   knowledge of singular  Ricci-flat
  K\"{a}hler-Einstein  metrics on singular  Calabi-Yau varieties. More precisely, if a Calabi-Yau variety $X$ does not   admit a  crepant resolutions,  or there is no Calabi-Yau degeneration with $X$ as central fiber, we do not know  whether a singular  Ricci-flat
  K\"{a}hler-Einstein  metric on $X$ induces a metric structure on $X$.

 \subsection{Extremal transition}
 This subsection study extremal transitions.

 Let $X$ be a singular Calabi-Yau variety.   Assume that $X$  admits a crepant resolution $\bar{\pi}: \bar{X}\rightarrow X$, and   there is a Calabi-Yau degeneration $\pi: \mathcal{X}\rightarrow \Delta$ with  $X_{0}=X$. The process of
going from $\bar{X}$ to $X_{t}$, $t\in \Delta^{*}$,  is called an
extremal transition, denoted by $$\bar{X}\rightarrow X
\rightsquigarrow X_{t}.$$ We call this process a  conifold
transition if $X$ is a conifold, i.e. $X$ has only ordinary double points as singularities. Locally a conifold transition exists in the case of dimension 3, i.e.  $$\bar{\mathcal{Q}}\rightarrow \mathcal{Q}_{0}
\rightsquigarrow \mathcal{Q}_{t},$$ where $\mathcal{Q}_{t}=\{(z_{0}, \cdots, z_{3})\in \mathbb{C}^{4}| z_{0}^{2}+ \cdots + z_{3}^{2}=t\}$, $t\in \Delta$, is the quadric,  and $\bar{\mathcal{Q}}$ is the small resolution of $\mathcal{Q}_{0} $.   It contracts the $\mathbb{CP}^{1}$ in $\mathcal{O}_{\mathbb{CP}^{1}}(1)\oplus\mathcal{O}_{\mathbb{CP}^{1}}(1)$ to obtain the 3-dimensional quadric  cone $\mathcal{Q}_{0}$ first, and then deforms the complex structure of $\mathcal{Q}_{0}$ to get those $\mathcal{Q}_{t}$, which are diffeomorphic to the total space of the  cotangent bundle $T^{*}S^{3}$.
A extremal  transition
  is an  algebro-geometric
 surgery
  providing   ways to connect two  topologically distinct Calabi-Yau
manifolds, which increases complex moduli and decreases K\"{a}hler moduli.

  The famous Reid's
fantasy
 conjectures  that all Calabi-Yau threefolds are connected to
each other by   extremal transitions, possibly including non-K\"{a}hler
Calabi-Yau threefolds,   so as to form a huge connected web (cf.
\cite{Re,Ro}).  There is also  a  projective version of this
conjecture,  the connectedness conjecture for  moduli spaces for
Calabi-Yau threefolds (cf. \cite{Gro,Gro0,Ro}).
In physics, extremal transitions   are related
to the vacuum degeneracy problem  in string theory (cf. \cite{CGH,CGGK,GMS,GH,Ro}).  The explanation of  extremal  transition in physics is that type II string theories modeled by topologically distinct Calabi-Yau vacua can be continuously connected via suitable black hole condensations (cf. \cite{Stro,GMS}). Readers are
referred to the survey article \cite{Ro} for topology, algebraic geometry, and
 physics properties of extremal transitions.

 Note that physicists are interested in Calabi-Yau manifolds in the first place, because   the holomony group of a Ricci-flat
  K\"{a}hler-Einstein  metric is  a subgroup of $SU(n)$ despite   of many works using only algebro-geometric setting in the mathematical physics literatures.  It makes perfect sense to consider extremal transitions  with metric.

Physicists  P. Candelas and X. C. de la Ossa conjectured  in \cite{Ca}  that
extremal  transitions   should be
 "continuous in the space of Ricci-flat K\"{a}hler-Einstein  metrics", even though   these processes  involve  topologically
distinct Calabi-Yau manifolds. This conjecture is verified  in
\cite{Ca} for the local conifold transition, i.e. $$\bar{\omega}_{co, s}\rightarrow \omega_{co} \leftarrow \omega_{co, t},$$
 when $s\rightarrow 0$ and $t\rightarrow0$, in the $C^{\infty}$-sense, where $\omega_{co}=\sqrt{-1}\partial\overline{\partial}\rho^{\frac{2}{3}} $ is the Ricci-flat
  K\"{a}hler-Einstein  cone metric on $\mathcal{Q}_{0}$, and $\bar{\omega}_{co, s}$ (respectively $\omega_{co, t}$) is the metric given by (\ref{eco-r}) (respectively (\ref{eco-s})). In this case, we can regard that the local  conifold transition is a surgery with metric, which  shrinks a holomorphic $\mathbb{CP}^{1}$ to one point,   obtains a singular Ricci-flat space, and   then  replaces the singular  point by  a special  lagrangian sphere $S^{3}$.

 In the general case, we obtain the following theorem in \cite{RoZ1}, which can be obtained by combining Theorem \ref{RZ1} and Theorem \ref{To1}.

 \begin{theorem}[\cite{RoZ1}]\label{t1.1}Let  $X$ be a  Calabi-Yau
$n$-variety. Assume that
\begin{itemize}
  \item[i)]  there is a  degeneration $(\pi: \mathcal{X}\rightarrow \Delta, \mathcal{L})$ of polarized Calabi-Yau manifolds  such that $X_{0}=X$.
  For any $t\in \Delta^{*}$, let $\omega_{t}$ be the
unique Ricci-flat
  K\"{a}hler-Einstein  metric on $X_{t} $ representing  $  c_{1}(\mathcal{L})|_{X_{t}}
  $.\item[ii)] $X$ admits a crepant  resolution $\bar{\pi}: \bar{X}\rightarrow X$.
  Let
  $\bar{\omega}_{s}$, $s\in (0, 1] $,  be  a family of Ricci-flat
  K\"{a}hler-Einstein  metrics with  K\"{a}hler classes $\lim\limits_{s\rightarrow 0}[\bar{\omega}_{s}]=\bar{\pi}^{*} c_{1}(\mathcal{L})|_{X} $
   in $H^{1,1}(\bar{X}, \mathbb{R} )$.
   \end{itemize}
  Then there exists a compact  metric space $(Y, d_{Y})$ such
  that $$(X_{t}, \omega_{t}) \stackrel{d_{GH}}\longrightarrow
(Y, d_{Y}) \stackrel{d_{GH}}\longleftarrow  (\bar{X}, \bar{\omega}_{s}), \  \  \ {\rm when} \ t\rightarrow 0, \ s\rightarrow 0.
$$
   Furthermore, $(Y, d_{Y})$ is isometric to the metric completion
   $(X_{reg},\omega) $ where $\omega \in c_{1}(\mathcal{L})|_{X}$ is the singular  Ricci-flat
  K\"{a}hler-Einstein  metric on $X$.
 \end{theorem}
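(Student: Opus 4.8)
The plan is to combine the two one-sided degeneration results --- Theorem \ref{RZ1} (together with its Gromov--Hausdorff upgrade Theorem \ref{RZ2}) on the smoothing side, and Theorem \ref{To1} on the crepant-resolution side --- and to observe that both produce the \emph{same} limiting object, namely the metric completion of $(X_{reg},\omega)$.

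The conceptually essential first step is to pin down the common limit metric. The restriction $L=\mathcal{L}|_{X}=\mathcal{L}|_{X_{0}}$ is ample on $X$, being the restriction of a relative ample line bundle, so Theorem \ref{EGZ} produces a \emph{unique} singular Ricci-flat K\"ahler--Einstein metric $\omega$ on $X$ with $\omega\in c_{1}(\mathcal{L})|_{X}\in H^{1}(X,\mathcal{PH}_{X})$. It is exactly this $\omega$ that occurs as the limit in hypothesis (i) through Theorem \ref{RZ1}, and --- after transporting by the isomorphism $\bar{\pi}: \bar{\pi}^{-1}(X_{reg})\to X_{reg}$ --- as the limit $\bar{\pi}^{*}\omega$ in hypothesis (ii) through Theorem \ref{To1}. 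The uniqueness clause of Theorem \ref{EGZ} is precisely what rules out any discrepancy between the two sides: both families of Ricci-flat metrics degenerate, on the regular locus, to one and the same smooth Ricci-flat K\"ahler--Einstein metric $\omega|_{X_{reg}}$.

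Next I would run the two sides in parallel. On the smoothing side, Theorem \ref{RZ2} gives directly that $(X_{t},\omega_{t})\stackrel{d_{GH}}\longrightarrow (Y,d_{Y})$, where $(Y,d_{Y})$ is the compact metric completion of $(X_{reg},\omega)$. On the resolution side, the remark following Theorem \ref{To1} records that, by \cite{RoZ1}, any Gromov--Hausdorff limit of a sequence $(\bar{X},\bar{\omega}_{s_{k}})$ is again the metric completion of $(X_{reg},\omega)$, by the same mechanism as in Theorem \ref{RZ2}. Concretely, the uniform diameter bound stated after Theorem \ref{To1}, together with the fact that the volumes $\frac{1}{n!}[\bar{\omega}_{s}]^{n}$, which converge to $\frac{1}{n!}c_{1}(L)^{n}>0$, are bounded below by a positive constant, makes Gromov precompactness (Theorem \ref{G1}) and Cheeger--Colding regularity (Theorem \ref{CC}) applicable; the smooth convergence $\bar{\omega}_{s}\to\bar{\pi}^{*}\omega$ on compact subsets of $\bar{\pi}^{-1}(X_{reg})$ then furnishes a local isometric embedding of $(X_{reg},\omega)$ into the regular part of the limit, whose image has Hausdorff codimension at least $2$ and is therefore almost geodesically convex by \cite{CC2}; this forces the limit to be the metric completion of $(X_{reg},\omega)$.

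Finally I would invoke the uniqueness of the metric completion. Since the completion of a metric space is determined up to isometry, the two limits produced above are isometric; writing $(Y,d_{Y})$ for this common space yields the two-sided convergence $(X_{t},\omega_{t})\stackrel{d_{GH}}\longrightarrow (Y,d_{Y})\stackrel{d_{GH}}\longleftarrow (\bar{X},\bar{\omega}_{s})$ asserted in the statement, with $(Y,d_{Y})$ isometric to the completion of $(X_{reg},\omega)$. I do not anticipate a serious analytic obstacle: all of the hard estimates are already packaged inside Theorems \ref{RZ1}, \ref{RZ2} and \ref{To1}, and the only point genuinely requiring care is the verification that the two degenerations target the identical singular metric $\omega$ --- which is the content of the uniqueness in Theorem \ref{EGZ} --- so that the two one-sided convergences may legitimately be glued along their common limit.
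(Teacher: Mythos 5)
Your proposal matches the paper's own route: the survey derives Theorem \ref{t1.1} precisely by combining Theorem \ref{RZ1} (with its Gromov--Hausdorff upgrade, Theorem \ref{RZ2}) on the smoothing side with Theorem \ref{To1} (and the remark after it identifying the limit as the metric completion of $(X_{reg},\omega)$) on the resolution side, the two limits agreeing by the uniqueness in Theorem \ref{EGZ}. Your write-up is correct and, if anything, makes explicit the gluing-along-the-common-limit step that the paper leaves implicit.
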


 By Theorem \ref{D-S}, $Y$ is homeomorphic to the Calabi-Yau variety  $X$.

 The following is a simple  example that   Theorem
\ref{t1.1} can apply.  Let $\bar{X}$ be the complete intersection in
$\mathbb{CP}^{4}\times \mathbb{CP}^{1} $ given by
$$y_{0}\mathfrak{g}(z_{0},\cdots ,z_{4})+y_{1}\mathfrak{h}(z_{0},\cdots,z_{4})=0,
\ \ \ y_{0}z_{4} -y_{1}z_{3}=0, $$ where $z_{0},\cdots,z_{4}$ are
homogeneous coordinates of $\mathbb{CP}^{4}$, $y_{0},y_{1}$ are
homogeneous coordinates of $\mathbb{CP}^{1}$, and $\mathfrak{g}$ and
$\mathfrak{h}$ are generic  homogeneous polynomials of degree 4.
Then $\bar{X}$ is a  crepant resolution of the quintic conifold
$X$ given by $$z_{3}\mathfrak{g}(z_{0},\cdots
,z_{4})+z_{4}\mathfrak{h}(z_{0},\cdots,z_{4})=0. $$
Hence there is a conifold transition $\bar{X}\rightarrow X
\rightsquigarrow \tilde{X} $ for any smooth quintic $\tilde{X}$ in
$\mathbb{CP}^{4}$.  Theorem  \ref{t1.1} implies that there is a
family of Ricci-flat K\"{a}hler metrics
 $\bar{\omega}_{s}$, $s\in (0, 1] $,  on $\bar{X} $ and a family of Ricci-flat  smooth quintic
 $(X_{t}, \omega_{t})$, $ t\in\Delta^{*}$,   such that
 $X_{1} =\tilde{X}$, and $$(X_{t}, \omega_{t}) \stackrel{d_{GH}}\longrightarrow
(Y, d_{Y}) \stackrel{d_{GH}}\longleftarrow  (\bar{X}, \bar{\omega}_{s}),
$$ for a compact metric space $(Y, d_{Y})$ homeomorphic to $X$.

As an application of Theorem \ref{t1.1} and Theorem \ref{t1.01+}, we
shall explore the path connectedness
 properties of certain class of  Ricci-flat Calabi-Yau threefolds.
Inspired by string theory in physics, some physicists made a
 projective version  of Reid's fantasy (cf.  \cite{CGH,GH,Ro}), the so called connectedness conjecture,  which is
formulated more precisely  in \cite{Gro} (See also \cite{Gro0}).
This conjecture says that there is a huge connected  web $ \Gamma$
such that   nodes of $ \Gamma$ consist of   all deformation classes
of Calabi-Yau threefolds, and two nodes are connected
$\mathfrak{D}_{1}- \mathfrak{D}_{2}$ if $\mathfrak{D}_{1}$ and $
\mathfrak{D}_{2}$ are related by an extremal transition, i.e.,  there
is a Calabi-Yau 3-variety $X$ that  admits a crepant resolution
$\bar{X}\in \mathfrak{D}_{1}$,  and there is a Calabi-Yau degeneration $\pi:\mathcal{X}\rightarrow \Delta$ such that $X=X_{0}$, and
$X_{t}\in \mathfrak{D}_{2}$ for any $ t\in
\Delta^{*}$.  It is shown in \cite{GH,CGGK,ACJM,Gro} that many known  Calabi-Yau threefolds are
connected to each other in the above sense.
 By combining the connectedness conjecture
and  Theorem \ref{t1.1} and Theorem \ref{t1.01+}, we reach a metric
version of connectedness conjecture as follows: if $
\mathfrak{CY}_{3}$ denotes  the set of Ricci-flat Calabi-Yau
threefolds $(X,\omega) $ with volume 1, then  the closure $
\overline{\mathfrak{CY}}_{3}$ of $ \mathfrak{CY}_{3}$ in
$(\mathcal{M}et,d_{GH}) $ is path connected, i.e.,  for any two
points  $p_{1}$ and $p_{2}\in \overline{\mathfrak{CY}}_{3}$, there
is a  path
$$\gamma:[0,1] \longrightarrow \overline{\mathfrak{CY}}_{3}\subset
(\mathcal{M}et,d_{GH})$$ such that $ p_{1}=\gamma(0)$ and
$p_{2}=\gamma(1)$.

 Given  a class   of Calabi-Yau 3-manifolds known  to be connected by
extremal transitions  in algebraic geometry, Theorem
\ref{t1.1}  can be used to show that the
closure of the  class of  Calabi-Yau 3-manifolds
   is path connected in $(\mathcal{M}et,d_{GH}) $.   In
   \cite{GH}, it is shown  that all complete  intersection  Calabi-Yau
   manifolds  (CICY) of dimension 3 in  products  of
  projective
 spaces  are connected by conifold  transitions. Furthermore,
   in  \cite{ACJM,CGGK}   a large number of complete
intersection  Calabi-Yau
   3-manifolds in toric varieties  are  verified  to be
   connected
   by extremal transitions, which include
   Calabi-Yau hypersurfaces in all  toric manifolds obtained by resolving
     weighted projective 4-spaces.   As a corollary  of Theorem \ref{t1.1},  we
 obtain the following result.

 \begin{corollary}[\cite{RoZ1}]\label{c1.3}
  \begin{itemize}
  \item[i)]   $$\overline{\bigcup_{
  all \ such   \  X
 } \{\mathcal{CY}(X,\mathbb{K}_{X})|CICY \ threefold   \  X  \ in \ products \ of \
  projective \
 spaces\}}
 $$  is path connected in $(\mathcal{M}et,d_{GH}) $.
 \item[ii)]  There is a path connected subset  of $\overline{\mathfrak{CY}}_{3}$, which contains all of CICY  threefolds    in  products  of
  projective
 spaces, and
    Calabi-Yau hypersurfaces  in
  toric 4-manifolds    obtained by resolving a  weighted projective
 4-space.
 \end{itemize}
 \end{corollary}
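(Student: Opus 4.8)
The plan is to assemble the path connectedness out of two kinds of elementary moves, each of which produces an honest continuous path in $(\mathcal{M}et,d_{GH})$: deformations within a fixed deformation class of Calabi-Yau threefolds, and conifold (extremal) transitions linking topologically distinct classes. The combinatorial backbone is supplied by algebraic geometry: by \cite{GH} every CICY threefold in a product of projective spaces is connected to every other by a finite chain of conifold transitions, so it suffices to connect the metric pictures across a single transition and then concatenate along this chain. Throughout I will use the volume-$1$ normalization built into $\mathcal{CY}(X,\mathbb{K}_{X})$ and $\overline{\mathfrak{CY}}_{3}$.

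First I would record that for a fixed deformation class $\mathfrak{D}$ the set $\bigcup_{X\in\mathfrak{D}}\mathcal{CY}(X,\mathbb{K}_{X})$ is path connected. The moduli of pairs (complex structure, Kähler class) attached to $\mathfrak{D}$ is a connected orbifold, the Calabi-Yau theorem (Theorem \ref{calabi-yau}) assigns to each pair a Ricci-flat metric, and this assignment is continuous for the $C^{\infty}$-topology, hence for $d_{GH}$ after normalization; thus the image is the continuous image of a path connected set. I would fold flops into the same node: by Corollary \ref{c1.2}, which rests on the flop Theorem \ref{t1.01+}, the union over all Calabi-Yau threefolds birational to a fixed one is already path connected, so birationally equivalent classes may be treated as one.

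The core step is the transition move. Given a conifold transition $\bar{X}\rightarrow X\rightsquigarrow X_{t}$, Theorem \ref{t1.1} produces a single compact metric space $(Y,d_{Y})$, the metric completion of $(X_{reg},\omega)$, with $(X_{t},\omega_{t})\stackrel{d_{GH}}{\longrightarrow}(Y,d_{Y})\stackrel{d_{GH}}{\longleftarrow}(\bar{X},\bar{\omega}_{s})$ as $t,s\to 0$. The decisive point is that the two families are genuine continuous paths $s\mapsto(\bar{X},\bar{\omega}_{s})$ and $t\mapsto(X_{t},\omega_{t})$ in $(\mathcal{M}et,d_{GH})$ sharing the common endpoint $Y$; concatenating them joins $\overline{\mathcal{CY}(\bar{X},\mathbb{K}_{\bar{X}})}$ to $\overline{\mathcal{CY}(X_{t_{0}},\mathbb{K}_{X_{t_{0}}})}$ through $Y$. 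Running this along the connected web of \cite{GH} and splicing with the within-class paths of the previous step yields part (i). For part (ii) I would replace \cite{GH} by the toric connectedness results of \cite{ACJM,CGGK}, enlarging the web so that its metric counterpart contains all the CICY threefolds together with the Calabi-Yau hypersurfaces in resolved weighted projective $4$-spaces, and run the identical concatenation inside $\overline{\mathfrak{CY}}_{3}$.

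The hard part will be the passage from sequential Gromov-Hausdorff convergence to continuous paths terminating at the singular limit, which is precisely what upgrades connectedness to path connectedness. Concretely one must verify that $s\mapsto(\bar{X},\bar{\omega}_{s})$ extends continuously to $s=0$ with value $Y$, and likewise on the $X_{t}$ side, i.e. that $d_{GH}\big((\bar{X},\bar{\omega}_{s}),(Y,d_{Y})\big)\to 0$ continuously rather than merely along a subsequence; this is exactly the strength behind Theorems \ref{RZ2}, \ref{To1} and \ref{t1.1}, where the uniqueness in Theorem \ref{EGZ} removes any need to pass to subsequences. A secondary technical point is that an arbitrary point of the closure $\overline{\bigcup\mathcal{CY}(X,\mathbb{K}_{X})}$ must itself be reachable by a path from the smooth locus; I would handle this by noting that every boundary point arises either from a Kähler class tending to $\partial\mathbb{K}_{X}$ or from a complex-structure degeneration, both of which are governed by the convergence theorems above and hence sit at the end of a continuous path, so the closure adds nothing that breaks path connectedness.
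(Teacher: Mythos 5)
Your proposal is correct and follows essentially the same route as the paper: path connectedness within each deformation class via the continuity of the Calabi--Yau assignment on the connected moduli of (complex structure, K\"ahler class) pairs, the transition move supplied by Theorem \ref{t1.1} (with the common limit $(Y,d_{Y})$ gluing the two closures), and concatenation along the algebro-geometric webs of \cite{GH} for part i) and \cite{ACJM,CGGK} for part ii). Your closing remarks on upgrading subsequential convergence to genuine continuous paths, and on reaching arbitrary closure points, are exactly the (largely implicit) points the paper relies on as well.
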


Finally, we remark that  many of the above results are expected to have some analogues for $G_{2}$ and $Spin(7)$ manifolds.  Like   Calabi-Yau threefolds in the string theory,
 $G_{2}$ and $Spin(7)$ manifolds play some roles in the M-theory (cf. \cite{Guk}).  There are also topological changing processes for  $G_{2}$ and $Spin(7)$ manifolds, for example the $G_{2}$-flop transition (see \cite{Guk,CHNP}),  which is an analogue of extremal transition for Calabi-Yau threefolds. Theorem \ref{t1.1} provides a trivial example for this situation. Let $X$, $X_{t}$, $\bar{X}_{s}$, $\omega_{t}$ and $\bar{\omega}_{s}$ be the same as in Theorem \ref{t1.1} with $\dim_{\mathbb{C}}X=3 $, and $g_{t}$ (respectively $\bar{g}_{s}$) be the corresponding Riemannian metric of  $\omega_{t}$ (respectively $\bar{\omega}_{s}$). It is standard that the holonomy groups  of  $\tilde{g}_{t}=g_{t}+d \theta^{2}$ and  $\tilde{\bar{g}}_{s}=\bar{g}_{s}+d \theta^{2}$ belong to  $G_{2}$ (cf. \cite{Joy}). If we denote $\tilde{X}_{t}=X_{t}\times S^{1}$ and $\tilde{\bar{X}}=\bar{X}\times S^{1}$,  Theorem \ref{t1.1} implies that
$$(\tilde{X}_{t}, \tilde{g}_{t}) \stackrel{d_{GH}}\longrightarrow
(\tilde{Y}, d_{\tilde{Y}}) \stackrel{d_{GH}}\longleftarrow  (\tilde{\bar{X}}, \tilde{\bar{g}}_{s}), \  \  \ {\rm when} \ t\rightarrow 0, \ s\rightarrow 0,
$$ where $\tilde{Y}$ is homeomorphic to $X \times S^{1}$, and $d_{\tilde{Y}}$ is induced by a $G_{2}$-metric on $X_{reg} \times S^{1}$.
However since neither advanced PDE nor algebraic geometry is available in this case, it is difficult to obtain a  general result (cf.   \cite{Guk,CHNP}). For instance, it is still open to construct compact $G_{2}$ and $Spin(7)$ spaces with isolated singularities (cf.   \cite{Guk}),  which should be an analogue of Theorem \ref{EGZ}.

\section{Collapsing of Calabi-Yau manifolds}
We finish this paper with  a brief  review  of   the collapsing of Ricci-flat Calabi-Yau manifolds,  where our knowledge is more limited comparing to the non-collapsing case.

Let $(X_{k}, \omega_{k})$ be a  sequence of Ricci-flat K\"{a}hler-Einstein Calabi-Yau manifolds with the normalized volume ${\rm Vol}_{\omega_{k}}(X_{k}) \equiv \upsilon$. If
 the diameter of $ \omega_{k}$ tends to infinite when $k \rightarrow \infty$, i.e. $${\rm diam}_{\omega_{k}}(X_{k})  \rightarrow \infty,$$ then  the same as in the proof of  Proposition \ref{dia},  we
obtain $${\rm Vol}_{\omega_{k}} (B_{\omega_{k}}(1)) \leq \frac{8n {\rm
Vol}_{\omega_{k}}(X_{k})}{{\rm diam}_{\omega_{k}}(X_{k})-2}\rightarrow 0, $$ for any  metric 1-ball $B_{\omega_{k}}(1)$,
    by  Theorem 4.1 of Chapter 1 in \cite{SY2} or Lemma 2.3 in \cite{Pa},  i.e.
    $(X_{k}, \omega_{k})$ must collapse.

    Most of the studies  of   the collapsing of Ricci-flat Calabi-Yau manifolds are motivated by the following version of  SYZ conjecture.
Let   $( \mathcal{X} \rightarrow \Delta, \mathcal{L})$ be   a degeneration of polarized Calabi-Yau manifolds of dimension $n$.
  If $0\in \Delta$ is a large complex limit  point (cf. \cite{GHJ}), a refined version of SYZ conjecture  due to Gross, Wilson,  Kontsevich and Soibelman (cf. \cite{GW,KS,KS2})  says that   $$ {\rm diam}_{\omega_{t}}(X_{t}) \sim\sqrt{ -\log |t|}, \  \  {\rm   and} $$  $$(X_{t}, {\rm diam}_{\omega_{t}}^{-2}(X_{t}) \omega_{t})\stackrel{d_{GH}}\longrightarrow (B, d_{B})$$  in the Gromov-Hausdorff sense. If $h^{i,0}(X_{t})=0$, $1\leq i <  n$, then  $B$
 is homeomorphic to $S^{n}$.   Furthermore, there is an open subset $B_{0}\subset B$ with   ${\rm codim}_{\mathbb{R}}B\backslash B_{0} \geqslant 2$, $B_{0}$ admits  a real  affine structure, and the metric $d_{B}$ is induced by a Monge-Amp\`ere  metric $g_{B}$ on $B_{0}$, i.e.  under  affine coordinates $x_{1},  \cdots, x_{n}$,  there is a potential function $\phi$ such that $$g_{B}= \sum_{ij} \frac{ \partial^{2} \phi}{ \partial x_{i}  \partial x_{j}} dx_{i} dx_{j},    \   \  {\rm and}  \    \det (\frac{\partial^{2}\phi}{ \partial x_{i}  \partial x_{j} }) =1.$$ In \cite{KS2},  it is further conjectured that the Gromov-Hausdorff limit  $B$ is homeomorphic to the Calabi-Yau skeleton  of the Berkovich analytic space associated to $\mathcal{X} \times_{\Delta}\Delta^{*}$ by taking some base change if necessary, which gives an algebro-geometric description of $B$.        This conjecture is clearly true for Abelian varieties, and  is verified by Gross and Wilson for fiberd  K3 surfaces with only type $I_{1}$ singular fibers  in \cite{GW}.

 Let $f:X \rightarrow B$ be an   elliptically fibered  K3 surface with 24 type $I_{1}$ singular fibers, and let $\omega_{k}$ be a sequence of Ricci-flat K\"{a}hler-Einstein metrics on $X$ with $\omega_{k}^{2}$ independent of $k$.  If $\epsilon_{k}=\int_{f^{-1}(b)}\omega_{k} \rightarrow 0, $ for a $b\in B$, then \cite{GW} proves that   $$(X_{k}, \epsilon_{k}\omega_{k})
\stackrel{d_{GH}}\longrightarrow (B, d_{B}),  $$ in the Gromov-Hausdorff sense, where  $ d_{B}$ is induced by a special K\"{a}hler metric $g_{B}$ on the complement $B_{0}$  of the discriminant locus.  In a  local coordinate chart  $U\subset B_{0}$, if $f^{-1}(U)=(U\times \mathbb{C})/\Lambda$, where $\Lambda={\rm Span}_{\mathbb{Z}}\{1,\tau\}$ and $\tau$ is a holomorphic function on $U$, i.e. the period of fibers, then ${\rm Im} (\tau)>0$  and  $g_{B}={\rm Im} (\tau) dy \otimes d\bar{y}$.   Furthermore, \cite{GW} shows very explicit asymptotic behaviour of   $\omega_{k}$   when approaching to  the limit.  Near singular fibers, $\epsilon_{k}\omega_{k}$ is asymptotic to the Ooguri-Vafa metric, and  on $f^{-1}(B_{0})$, $\epsilon_{k}\omega_{k}$ is exponentially  asymptotic to the semi-flat metric $\omega_{sf,k}$, i.e.  $$ \|\epsilon_{k}\omega_{k}-\omega_{sf,k}\|_{C^{0}_{loc}}\leq Ce^{-\frac{C}{\epsilon_{k}}},$$ where $$\omega_{sf,k}=\frac{\sqrt{-1}}{2}\Big ({\rm Im} (\tau) dy \wedge d\bar{y}+2\epsilon_{k}^{2}\partial \overline{\partial} \frac{({\rm Im} (z))^{2}}{{\rm Im} (\tau)}\Big  ),$$ and $z$ is the coordinate  on $\mathbb{C}$. By using the HyperK\"{a}hler rotation,  the hypothesis of  this result can be explained as a sequence of complex structures approaching to  the large complex limit while keeping the polarization fixed.  Hence the collapsing version of SYZ conjecture holds for certain  K3 surfaces.

There are generalizations of Gross and  Wilson's theorem to the higher dimensional case. Let $X$ be a Calabi-Yau $n$-manifold admitting a holomorphic fiberation $f:X\rightarrow Z$, where $Z$ is a normal projective variety of dimension $m$, $m<n$. If $Z_{0}\subset Z$ is the complement of the  discriminant locus of $f$, then the fiber  $X_{b}=f^{-1}(b)$, $b\in Z_{0}$, is a Calabi-Yau manifold of dimension $n-m$. Let $\alpha_{Z}$ be an ample class on $Z$, $\alpha_{X}$ be an ample class on $X$, and $\omega_{s} \in \alpha_{Z}+s\alpha_{X}$, $s\in (0,1]$, be the unique Ricci-flat K\"{a}hler-Einstein metric.  The convergence of $\omega_{s}$ is studied in \cite{To2}, and it is shown that $\omega_{s}$ converges to $f^{*}\omega$ in the current sense, where $\omega$ is a K\"{a}hler metric on $Z$ such that the Ricci curvature of $\omega$ is the Weil-Petersson metric for smooth  fibers, i.e. ${\rm Ric}(\omega)=\omega_{WP}$ on $Z_{0}$.  The convergence is improved to be smooth in \cite{GTZ} under the further assumption that general fibers are Abelian varieties. Moreover, we obtain that $(Z_{0}, \omega)$ can be local isometrically  embedded in any Gromov-Hausdorff limit of $(X, \omega_{s})$, and   by using the HyperK\"ahler rotation, we proved a version of SYZ for higher dimensional HyperK\"ahler manifolds in \cite{GTZ}. If the base $Z$ has dimension one,  \cite{GTZ2} shows that $(X, \omega_{s})$ converges to $(Z, \omega)$ in the Gromov-Hausdorff sense, which
 extends Gross-Wilson's result to all elliptically fibred K3 surfaces. Later,   better regularities  of the convergence are  obtained in \cite{HT,TYang} under  more general  assumptions.

 All these progresses are either for the case of a fixed Calabi-Yau manifold, or  that we can use the HyperK\"ahler rotation to convert the question to that case.   The collapsing version of SYZ conjecture  is still widely open beyond the HyperK\"ahler setting. In a recent preprint \cite{zha2}, an analogue  of this conjecture is obtained for canonical polarized manifolds.

\appendix
\section{ Finiteness theorem for   polarized manifolds} $$\text{ by  Valentino Tosatti, Yuguang Zhang}
\footnote{The first-named author is supported in part by NSF grant DMS-1308988 and by a Sloan Research Fellowship.
This work was carried out while the first-named author was visiting the Yau Mathematical Sciences Center of Tsinghua University in Beijing, which he would like to thank for the hospitality. We also thank S. Boucksom for very helpful discussions, and for pointing out that such finiteness results are well-known thanks to \cite{KoMa}.}$$

We show a finiteness theorem for  polarized  complex manifolds.

There are many previous  finiteness theorems about diffeomorphism  types in Riemannian geometry.
    Cheeger's finiteness theorem asserts that given constants  $D$, $\upsilon$, and $\Lambda$, there are only finitely many $n$-dimensional compact differential   manifold $X$ admitting  Riemannian metric $g$ such that ${\rm diam}_{g}(X) \leq D$,  ${\rm Vol}_{ g}( X)\geq \upsilon$ and the sectional curvature $|{\rm Sec}(g)|\leq\Lambda$.  This theorem can be proved as a corollary of the  Cheeger-Gromov convergence  theorem (cf. \cite{Fu,Rong}), which shows that
 if  $(X_{k}, g_{k})$ is a family compact Riemannian
  manifolds with the above bounds, then   a subsequence of  $(X_{k}, g_{k})$ converges to a $C^{1,\alpha}$-Riemannian
  manifold $Y$ in the $C^{1,\alpha}$-sense, and furthermore, $X_{k}$ is diffoemorphic to $Y$ for $k\gg 1$.
In \cite{AC},  Cheeger's finiteness theorem is generalized to the case where the hypothesis on   the sectional curvature bound  is replaced by the weaker bounds of Ricci curvature  $|{\rm Ric}(g)|\leq \lambda$ and the  $L^{\frac{n}{2}}$-norm  of curvature  $\|{\rm Sec}(g)\|_{L^{\frac{n}{2}}}\leq \Lambda$.  Furthermore, if $n=4$ and $g$ is an Einstein metric, then the   integral bound  of curvature can be replaced by a bound for the Euler characteristic.

 We call $(X,L)$  a polarized $n$-manifold, if  $X$ is a compact complex manifold with an ample line bundle $L$. In \cite{KoMa}, a finiteness theorem for  polarized manifolds is obtained. More precisely, Theorem  3 of \cite{KoMa} asserts that for any two constants $V>0$ and $\Lambda >0$,  there are finite many polynomials $P_{1}, \cdots, P_{\ell}$ such that if  $(X,L)$ is  a polarized $n$-manifold with $ c_{1}(L)^{n}\leq V$ and $-c_{1}(X)\cdot c_{1}(L)^{n-1}\leq \Lambda$, then one $P_{i}$  is the Hilbert polynomial of $(X,L)$, i.e. $P_{i}(\nu)=\chi (X,L^{\nu})$.  Consequently, polarized $n$-manifolds with the above  bounds  have only finitely many possible deformation types and finitely many possible   diffeomorphism  types.

 For any constants  $\lambda>0$ and $D>0$,  denote  $$\mathfrak{N}(n, \lambda, D)=\{(X,L)\ |\ \exists \omega\in c_{1}(L) \ {\rm with} \ {\rm Ric}(\omega)\geq -\lambda \omega, \ \ {\rm diam}_{\omega}(X)\leq D \}. $$  Then    $$ c_{1}(L)^{n}=n! {\rm Vol}_{\omega}(X)\leq V=V(n, \lambda,D)$$ by the  Gromov-Bishop comparison theorem, and $$-c_{1}(X)\cdot c_{1}(L)^{n-1}=- \int_{X}{\rm Ric}(\omega)\wedge \omega^{n-1}\leq n \lambda V. $$ The following proposition  is a corollary of Theorem 3 in
 \cite{KoMa}.  Here we give an analytic proof.

\begin{proposition}\label{prop} Polarized manifolds in  $\mathfrak{N}(n, \lambda, D)$  have only finitely many possible Hilbert polynomials, and for any $(X,L)\in \mathfrak{N}(n, \lambda, D)$ we have
\begin{equation}\label{todo}
|\chi (X,L^{\nu})| \leq   C(n,\lambda, D) \nu^{n},
\end{equation}
for all $\nu\geq 1$,
where  $C(n,\lambda, D)$ is a constant depending only on $n, \lambda$ and $D$.
Furthermore,  any $(X,L)\in \mathfrak{N}(n, \lambda, D)$ can be embedded in the same $\mathbb{CP}^{N}$ with   $L^{m}\cong \mathcal{O}_{\mathbb{CP}^{N}}(1)|_{X}$ for integers $m=m(n,  \lambda, D)>0$ and $N=N(n, \lambda, D)>0$.
   As a consequence, manifolds in  $\mathfrak{N}(n, \lambda, D)$ have only finitely many possible deformation types and finitely many possible   diffeomorphism  types.
\end{proposition}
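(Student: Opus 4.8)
The plan is to reduce everything to a uniform bound on the Hilbert polynomial $P(\nu)=\chi(X,L^{\nu})$, and to obtain that bound analytically from the two intersection estimates $c_{1}(L)^{n}\le V$ and $-c_{1}(X)\cdot c_{1}(L)^{n-1}\le \Lambda$ already derived from Bishop--Gromov and from integrating $\Ric(\omega)\ge-\lambda\omega$. By Hirzebruch--Riemann--Roch, $P(\nu)=\int_{X}e^{\nu c_{1}(L)}\,\mathrm{Td}(X)$ is a polynomial of degree $n$ in $\nu$ whose leading coefficient $c_{1}(L)^{n}/n!$ lies in $[1/n!,\,V/n!]$; the point is to bound \emph{all} of its coefficients by a constant $C(n,\lambda,D)$, since then $|P(\nu)|\le C(n,\lambda,D)\nu^{n}$ for $\nu\ge1$, which is \eqref{todo}. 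I would not estimate the individual Chern numbers in the coefficients directly; instead I would bound $P$ at a controlled set of integers and interpolate.

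The analytic heart is a uniform bound $h^{0}(X,L^{\nu})\le C(n,\lambda,D)\nu^{n}$. First I would dispose of the higher cohomology: the bundle $K_{X}^{-1}\otimes L^{\nu}$ carries a metric of curvature $\Ric(\omega)+\nu\omega\ge(\nu-\lambda)\omega$, which is strictly positive once $\nu\ge\nu_{0}(\lambda)$ for a suitable integer $\nu_{0}$, so Kodaira vanishing applied to $L^{\nu}=K_{X}\otimes(K_{X}^{-1}\otimes L^{\nu})$ gives $H^{q}(X,L^{\nu})=0$ for all $q>0$, whence $P(\nu)=h^{0}(X,L^{\nu})$ in that range. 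To bound $h^{0}$, fix the Hermitian metric on $L^{\nu}$ of curvature $\nu\omega$ and, for a holomorphic section $s$, set $u=|s|^{2}$; a Bochner computation gives $\Delta_{\omega}u\ge-C(n)\,\nu\,u$. Since $\Vol_{\omega}(X)=c_{1}(L)^{n}/n!\ge1/n!$, the hypotheses $\Ric\ge-\lambda$ and $\mathrm{diam}\le D$ yield, via Bishop--Gromov relative comparison, a non-collapsing estimate $\Vol_{\omega}(B(x,r))\ge\kappa(n,\lambda,D)\,r^{2n}$ for $r\le1$ together with a uniform Sobolev inequality. Rescaling to the ball $B(x,\nu^{-1/2})$ and running Moser iteration then gives the sub-mean-value bound $u(x)\le C\nu^{n}\|s\|_{L^{2}}^{2}$, i.e.\ a pointwise Bergman-kernel bound $\rho_{\nu}(x)\le C\nu^{n}$; integrating yields $h^{0}(X,L^{\nu})=\int_{X}\rho_{\nu}\,\omega^{n}\le C\nu^{n}\Vol_{\omega}(X)\le C'(n,\lambda,D)\nu^{n}$.

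With $P(\nu)=h^{0}(X,L^{\nu})\in[0,\,C'\nu^{n}]$ for $\nu_{0}\le\nu\le\nu_{0}+n$, Lagrange interpolation through these $n+1$ values---whose nodes depend only on $n$ and $\lambda$---writes each coefficient of $P$ as a fixed linear combination of numbers bounded by $C'(n,\lambda,D)$, hence bounds all coefficients and gives \eqref{todo} for every $\nu\ge1$. In particular the integers $P(0),\dots,P(n)$ are each bounded in absolute value by $C(n,\lambda,D)$, so only finitely many tuples $(P(0),\dots,P(n))$ occur, and each such tuple determines $P$; thus $\mathfrak{N}(n,\lambda,D)$ realizes only finitely many Hilbert polynomials $P_{1},\dots,P_{\ell}$.

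It remains to produce the uniform embedding and to conclude. Because only finitely many Hilbert polynomials occur, Matsusaka's big theorem (cf.\ \cite{Mat}), effective in the intersection numbers $L^{n}$ and $K_{X}\cdot L^{n-1}$, provides a single $m=m(n,\lambda,D)$ for which $L^{m}$ is very ample for every $(X,L)\in\mathfrak{N}(n,\lambda,D)$; the embedding dimension is then $N=h^{0}(X,L^{m})-1\le Cm^{n}\le N(n,\lambda,D)$ by \eqref{todo}. All the images lie in the finitely many Hilbert schemes $\mathcal{H}il^{P_{i}}_{N}\subset\PP^{N}$, each a projective scheme with finitely many connected components, and over each component Ehresmann's theorem makes the fibres of the universal family mutually diffeomorphic; hence $\mathfrak{N}(n,\lambda,D)$ contains only finitely many deformation types and finitely many diffeomorphism types (cf.\ \cite{Bour}). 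The main obstacle is the uniform section estimate of the second step: one must keep the Sobolev and Moser-iteration constants independent of $(X,L)$---delicate because only a \emph{lower} Ricci bound is available, so two-sided heat-kernel bounds are not---and get the $\nu$-scaling exactly right by working at the natural scale $\nu^{-1/2}$. Invoking Matsusaka's theorem for very-ampleness is the one place where the argument leans on more than elementary analytic estimates; a fully analytic route through $L^{2}$ peak sections would additionally require local regularity that a mere lower Ricci bound does not supply.
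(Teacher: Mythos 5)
Your proposal is correct, but the central cohomological step is genuinely different from the one in the paper. The paper never invokes a vanishing theorem: it bounds every $h^{0,p}(L^{\nu})$, $0\leq p\leq n$, for \emph{all} $\nu\geq 1$ by applying the Bochner--Kodaira identity to $\Delta_{\db}$-harmonic $L^{\nu}$-valued $(0,p)$-forms, obtaining $\Delta|s|^{2}\geq -(\nu(n-2)+\lambda p)|s|^{2}$, running Moser iteration with the Croke Sobolev constant, and then using Li's density argument (with the extra factor $\binom{n}{p}$ coming from the rank of $\Lambda^{0,p}$) to convert the sup bound into a dimension bound; the alternating sum then gives $|\chi(X,L^{\nu})|\leq C\nu^{n}$ in one stroke. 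You instead dispose of the higher cohomology by Kodaira--Nakano vanishing applied to $L^{\nu}=K_{X}\otimes(K_{X}^{-1}\otimes L^{\nu})$ for $\nu>\lambda$ (valid, since $\Ric(\omega)+\nu\omega\geq(\nu-\lambda)\omega>0$), bound only $h^{0}$ via the standard Bergman-kernel estimate for holomorphic sections, and then recover the bound for all $\nu\geq1$ together with the finiteness of Hilbert polynomials by Lagrange interpolation of the degree-$n$ polynomial from $n+1$ bounded integer values at nodes $\nu_{0},\dots,\nu_{0}+n$. What your route buys is that the Bochner computation is only needed in its simplest form (for sections rather than bundle-valued $(0,p)$-forms) and the Bergman density is scalar-valued, so no $\binom{n}{p}$ bookkeeping; what it costs is the appeal to Kodaira vanishing and the extra interpolation step, whereas the paper's argument is self-contained at the level of the Bochner--Kodaira identity and yields \eqref{todo} for every $\nu\geq1$ directly. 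Two harmless slips: the tuple you should quote as determining $P$ is $(P(\nu_{0}),\dots,P(\nu_{0}+n))$ rather than $(P(0),\dots,P(n))$ (though once all coefficients are bounded either tuple works), and the uniform $m$ in Matsusaka is most simply obtained, as in the paper, by taking the maximum of $m_{0}(P_{i})$ over the finitely many Hilbert polynomials rather than by invoking an effective version. The remaining steps (volume bounds from Bishop--Gromov, Sobolev constant from Croke, finitely many components of finitely many Hilbert schemes, Ehresmann over each component) coincide with the paper's.
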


\begin{proof} Let $(X,L)\in \mathfrak{N}(n, \lambda, D)$, and  $\omega\in  c_{1}(L)$ be a K\"{a}hler metric  with $ {\rm Ric}(\omega)\geq -\lambda \omega$, and $ {\rm diam}_{\omega}(X)\leq D$. Fix a Hermitian metric $h$ on $L$ with curvature equal to $\omega$.
The Gromov-Bishop comparison theorem gives   $$1\leq c_{1}(L)^{n}=n! {\rm Vol}_{\omega}(X)\leq V=V(n, \lambda,D).$$
We would  like to  estimate $h^{0,p}(L^\nu)=\dim H^{0,p}(X,L^{\nu})$, $0\leq p \leq n$, for $\nu\geq 1$.
We denote by $\langle \cdot,\cdot\rangle$ the pointwise inner product on $\Omega^{0,p}(X,L^{\nu})$ (smooth $L^\nu$-valued $(0,p)$-forms on $X$) induced by the metric $h^\nu$ on $L^\nu$ whose curvature is  $-\sqrt{-1}\nu \omega$, and by $|\cdot|$ its corresponding norm.
For any $s\in \Omega^{0,p}(X,L^{\nu})$ we have
$$\Delta |s|^2=g^{i\ov{j}}\de_i\de_{\ov{j}}|s|^2=|\nabla s|^2+|\ov{\nabla} s|^2+\langle \Delta s,s\rangle+ \langle s,\ov{\Delta }s\rangle,$$
where $\Delta s=g^{i\ov{j}}\nabla_i\nabla_{\ov{j}}s$ is the rough Laplacian and $\ov{\Delta} s=g^{i\ov{j}}\nabla_{\ov{j}}\nabla_i s$ its ``conjugate''.
Commuting covariant derivatives we get
$$\ov{\Delta }s=\Delta s-\nu n s-{\rm Ric}^{\sharp}(s),$$
where if $p\geq 1$ and we write locally $s=s_{\ov{i_1}\dots\ov{i_p}}d\ov{z}^{i_1}\wedge\dots \wedge d\ov{z}^{i_p}$ with $s_{\ov{i_1}\dots\ov{i_p}}$ local smooth sections of $L^\nu$, then
$${\rm Ric}^{\sharp}(s)=\sum_{j=1}^p g^{k\ov{\ell}}R_{k\ov{i_j}}\ s_{\ov{i_1}\dots \ov{\ell}\dots \ov{i_p}}d\ov{z}^{i_1}\wedge\dots \wedge d\ov{z}^{i_p},$$
while if $p=0$ we let ${\rm Ric}^{\sharp}(s)=0$.
This gives
$$\Delta |s|^2=|\nabla s|^2+|\ov{\nabla} s|^2+2\mathrm{Re}\langle \Delta s,s\rangle-\nu n |s|^2-\langle s, {\rm Ric}^{\sharp}(s)\rangle.$$
Next, we apply the Bochner-Kodaira identity \cite[Theorem 6.2]{MK}, which for any $s\in \Omega^{0,p}(X,L^{\nu})$ gives
$$\Delta_{\db}s=-\Delta s+\nu s +{\rm Ric}^{\sharp}(s),$$
and so if we assume that $\Delta_{\db}s=0$, we obtain
\[\begin{split}
\Delta |s|^2&=|\nabla s|^2+|\ov{\nabla} s|^2+2\langle {\rm Ric}^\sharp(s),s \rangle+2\nu|s|^2-\nu n|s|^2-\langle s, {\rm Ric}^{\sharp}(s)\rangle\\
&=|\nabla s|^2+|\ov{\nabla} s|^2+\langle {\rm Ric}^\sharp(s),s \rangle-\nu (n-2)|s|^2,
\end{split}\]
noting that $\langle {\rm Ric}^\sharp(s),s \rangle=\langle s, {\rm Ric}^{\sharp}(s)\rangle$.
Using that
$$\langle {\rm Ric}^\sharp(s),s \rangle\geq -\lambda p |s|^2,$$
we finally obtain
$$\Delta |s|^2\geq -(\nu (n-2)+\lambda p)|s|^2.$$
A standard Moser iteration argument (see e.g. \cite[Lemma 2.4]{Bour}) applied to this differential inequality gives
\begin{equation}\label{estim}
\sup_X|s|^2\leq A (\nu (n-2)+\lambda p)^{n}\int_X|s|^2\frac{\omega^n}{n!}=A(\nu (n-2)+\lambda p)^n\|s\|^2_{L^{2}},
\end{equation}
where $A$ depends only on the Sobolev constant of $\omega$ and on $n$. Thus $A=A(n, V,\lambda, D)$  by a result of Croke \cite{Cr}.

  Now we use the arguments in Lemma 11 and Theorem 12 of the paper of Li \cite{Li}.
  By the Hodge Theorem, we have an isomorphism $H^{0,p}(X,L^{\nu})\cong \mathcal{H}^{0,p}(X,L^{\nu})$, the space of $\Delta_{\db}$-harmonic forms in $\Omega^{0,p}(X,L^{\nu})$.
Let $$\rho=\sum |s_{i}|^{2}$$ for  an
 orthonormal
 basis $s_{i}$
of $\mathcal{H}^{0,p}(X,L^{\nu})$. The function $\rho$ is easily seen to be independent of the choice of orthonormal basis. Let $x\in X$ such that $$\rho(x)=\sup_{X} \rho >0.$$ Then
$$E_{0}=\{s\in \mathcal{H}^{0,p}(X,L^{\nu})| s(x)=0\},$$
is a proper linear subspace of $\mathcal{H}^{0,p}(X,L^{\nu})$, with orthogonal complement $E_{0}^{\bot}$.
 We claim that $\dim E_{0}^{\bot}\leq \binom{n}{p}$. If $s_{1}, \cdots, s_{r}$, $r>\binom{n}{p}$,   is an
 orthonormal
 basis of $E_{0}^{\bot}$, then there are $a_{i}$, $i=1, \cdots, r$,  such that $\sum a_{i}s_{i}(x)=0$.  Thus $\sum a_{i}s_{i}\in E_{0}$, which  is a contradiction.

   Let $s_{1}, \cdots, s_{r}\in \mathcal{H}^{0,p}(X,L^{\nu})$ be an orthonormal basis of $ E_{0}^{\bot}$, which we can complete to an orthonormal basis of $\mathcal{H}^{0,p}(X,L^{\nu})$
with an orthonormal basis $s_{r+1}, \cdots, s_{N}$ of $E_{0}$.  We have
\[\begin{split}
h^{0,p}(L^\nu)= \int_{X}\rho \frac{\omega^n}{n!}\leq  V\sup_X\rho& = V \sup_X\left(\sum_{i=1}^{r}|s_{i}|^{2}\right)\\
&\leq \binom{n}{p} V \sup_i\|s_i\|_{L^{\infty}}^{2}\\  &\leq \binom{n}{p} V  A (\nu (n-2)+\lambda p)^{n},
\end{split}\]
using \eqref{estim},
and thus
 for any $\nu \geq 1$ we have
$$|\chi (X,L^{\nu})|=\left|\sum_{p}(-1)^{p}h^{0,p}(L^\nu)\right|\leq \sum_{p}  \binom{n}{p} V  A (\nu (n-2)+\lambda p)^{n}\leq C(n,\lambda,D)\nu^n,$$
thus proving \eqref{todo}.
Since
the Hilbert polynomial $P$ of $(X,L)$ is given by
$$P(\nu)=\chi (X,L^{\nu})=\int_{X}e^{\nu c_{1}(L)}{\rm Todd}_{X}=a_{0}\nu^{n}+a_{1}\nu^{n-1}+ \cdots +a_{n}\in \mathbb{Z},$$
 it follows that we have only finitely many possible values for $a_{0}, \cdots, a_{n}$ by taking sufficiently many values of $\nu$ and solving the linear equations. Hence $(X,L)$ has only finitely many possible Hilbert polynomials.

 Now, by Matsusaka's Big Theorem (cf. \cite{Mat}),       there is an  $m_{0}>0$ depending only on $P$    such that  for any $m\geqslant m_{0}$,  $L^{m}$ is very ample, and $H^{i}(X,L^{m})=\{0\}$, $i>0$.   By choosing a basis  $\Sigma$ of $H^{0}(X,L^{m})$,  we have an embedding $\Phi_{\Sigma}: X
 \hookrightarrow \mathbb{CP}^{N}$ such that  $L^{m}=\Phi_{\Sigma}^{*}\mathcal{O}_{\mathbb{CP}^{N}}(1)$. We   regard $\Phi_{\Sigma}(X)$ as a   point in the Hilbert scheme  $\mathcal{H}ilb_{N}^{P_{m}}$ parametrizing the subshemes of $\mathbb{CP}^{N}$ with   Hilbert polynomial $P_{m}(\nu)=P(m\nu)$, where $N=h^{0} (X,L^{m})-1$.
 Finally, $\Phi_{\Sigma}( X)$ belongs to finitely many possible components of finitely many possible Hilbert schemes, and thus $\mathfrak{N}(n, \lambda, D)$ has only finitely many possible  deformation and  diffeomorphism  types.
  \end{proof}

Note that for any polarized manifold  $(X,L)$, the  volume of $(X,\omega)$ as in the definition of $\mathfrak{N}(n, \lambda, D)$ is bounded below uniformly away from zero. We remark that a similar diffeomorphism finiteness result fails for the family of closed Riemannian manifolds $(M,g)$ of real dimension $m$ with ${\rm Ric}(g)\geq -\lambda g, {\rm diam}_g(X)\leq D, {\rm vol}_g(X)\geq v>0$. Indeed Perelman \cite{Per} constructed Riemannian metrics on $\sharp_{k}\mathbb{CP}^2$ for all $k\geq 1$, which have positive Ricci curvature, unit diameter and volume bounded uniformly away from zero.

\end{document}